\newcommand{\re}{\mathbb{R}}
\newcommand{\co}{\mathbb{C}}
\newcommand{\fd}{\mathbb{K}}
\newcommand{\na}{\mathbb{N}}
\newcommand{\bbb}{\mathcal{B}}
\newcommand{\aaa}{\mathcal{A}}
\newcommand{\cc}{\mathcal{C}}
\newcommand{\ld}{\mathbb{F}}
\newcommand{\LL}{\mathcal{L}}
\newcommand{\SSS}{\mathcal{S}}
\newcommand{\pp}{\mathcal{P}}
\newcommand{\ttt}{\mathcal{T}}
\newcommand{\uuu}{\mathcal{U}}
\newcommand{\cpn}{{\co}P^n}
\newcommand{\rpn}{{\re}P^n}
\newcommand{\ba}{{\mbox{\boldmath$\alpha$}}}
\newcommand{\bbeta}{{\mbox{\boldmath$\beta$}}}
\newcommand{\lcm}{\mbox{\rm lcm}}
\newcommand{\boxx}{\rule{2.12mm}{3.43mm}}
\long\def\symbolfootnote[#1]#2{\begingroup%
\def\thefootnote{\fnsymbol{footnote}}\footnote[#1]{#2}\endgroup}
\newtheorem{thm}{Theorem}[section]
\newtheorem{prop}[thm]{Proposition}
\newtheorem{lem}[thm]{Lemma}
\newtheorem{cor}[thm]{Corollary}
\theoremstyle{definition}
\newtheorem{defn}[thm]{Definition}
\newtheorem{notation}[thm]{Notation}
\newtheorem{example}[thm]{Example}
\theoremstyle{remark}
\begin{document}

\title{Weighted Projective Spaces and a Generalization of Eves' Theorem}

\author{Adam Coffman}

\LaTeXdiagrams

\maketitle

\begin{abstract}
  For a certain class of configurations of points in space, Eves'
  Theorem gives a ratio of products of distances that is invariant
  under projective transformations, generalizing the cross-ratio for
  four points on a line.  We give a generalization of Eves' theorem,
  which applies to a larger class of configurations and gives an
  invariant with values in a weighted projective space.  We also show
  how the complex version of the invariant can be determined from
  classically known ratios of products of determinants, while the real
  version of the invariant can distinguish between configurations that
  the classical invariants cannot.

\end{abstract}

\section{Introduction}\label{sec0}\symbolfootnote[0]{MSC 2010: Primary
    51N15; Secondary 05B30, 14E05, 14N05, 51A20, 51N35, 68T45}

Eves' Theorem (\cite{e}) is a generalization of two basic geometric
results: Ceva's Theorem for triangles in Euclidean geometry, and the
projective invariance of the cross-ratio in projective geometry.  Both
results, and more generally Eves' Theorem, assign an invariant ratio
of products of distances to certain types of configurations of points
in space.  

\begin{center}
  \begin{figure}
\begin{center}
    \includegraphics[scale=0.65]{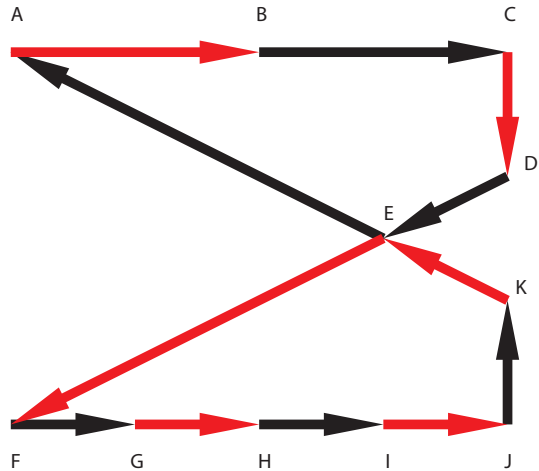}
    \caption{A configuration of $11$ points, $5$ lines, and $12$
      directed segments in the real Euclidean plane, to which Eves'
      Theorem applies.}\label{fig1}
  \end{center}
  \end{figure}
\end{center}

The example shown in Figure \ref{fig1}, where eleven points lie on
five lines, forming twelve directed segments, gives the general idea
of Eves' Theorem.  The ratio of Euclidean signed
distances $$\frac{AB\cdot CD\cdot EF\cdot GH\cdot IJ\cdot KE}{BC\cdot
  DE\cdot FG\cdot HI\cdot JK\cdot EA}$$ is called (by Eves) an
``h-expression,'' meaning that each point $A,\ldots,K$ occurs equally
many times in the numerator and denominator (for example, $E$ occurs
twice), and each line defined by the two lists of six segments also
occurs equally many times (for example,
$\overleftrightarrow{FG}=\overleftrightarrow{HI}$ occurs twice in the
numerator and twice in the denominator).  The statement of Eves'
Theorem is that the value of an h-expression is an invariant under
projective transformations of the plane.  Related identities for
products of distances have been known in projective geometry since at
least \cite{p}, but it is convenient to attribute the above
formulation to Eves.

The notion of h-expression can also be more visually conveyed in terms
of {\underline{coloring}} the configuration --- an idea demonstrated
at a 2011 talk by Marc Frantz \cite{f2}.  Each point in the
configuration of Figure \ref{fig1} is an endpoint of an equal number
of red and black segments, and, dually, each line contains an equal
number of red and black segments.  Then the ratio of the product of
red lengths to the product of black lengths is Eves' invariant.

Eves' Theorem, when stated in a purely projective way (using
homogeneous coordinates, not Euclidean distances, as in Example
\ref{ex4.5}) is itself a special case of a family of invariant ratios
of products of determinants of homogeneous coordinates for points in
projective space over a field $\fd$.  These ratios were well-known in
$19^{th}$ century Invariant Theory (\cite{b}, \cite{c}, \cite{s}), but
have been more recently used (and, sometimes, re-discovered) in
projective geometry applied to computational topics such as vision and
photogrammetry, or automated proofs (\cite{bb}, \cite{crg}, \cite{f},
\cite{rg}).

Eves' Theorem can be stated in terms of a function, where the input is
a configuration of points $\SSS$ in projective space $\fd P^D$, and
the output is a ratio, i.e., an element of the projective line $\fd
P^1$; the content of the Theorem is that the ratio is well-defined
(independent of certain choices made in specifying the configuration)
and also invariant under projective transformations.  Our new
construction, Theorem \ref{thm4.4}, generalizes the target to a
``weighted projective space'' $\fd P(p_0,\ldots,p_n)$, so the
projective line is the special case $\fd P(1,1)$.  In Section
\ref{sec3}, we give a unified treatment of the configurations to which
Theorem \ref{thm4.4} applies, by a weighting, coloring, and indexing
scheme.  A configuration $\SSS$ of points in the projective space $\fd
P^D$ that satisfies a condition (Definition \ref{def4.2}), depending
on the weight vector ${\mathbf p}=(p_0,\ldots,p_n)$, is assigned an
element $E_{\mathbf p}(\SSS)\in\fd P({\mathbf p})$, an invariant under
``morphisms'' of the configuration (Definition \ref{def4.7}), which
generalize projective transformations.  The number of colors is $n+1$,
so the classical case is the assignment of Eves' ratio
$E_{(1,1)}(\SSS)\in\fd P(1,1)$ to some two-color configurations
$\SSS$, and the new weighted invariants apply to a larger category of
multi-color configurations.

In Section \ref{sec1} we review the definition and some elementary
properties of weighted projective spaces --- these properties are
well-known in the complex case, but the real case is different in some
ways we intend to exploit, so we are careful to present all the
necessary details.  Section \ref{sec2} introduces a new notion of
``reconstructibility'' for a weighted projective space; the two main
results are that complex weighted projective spaces are all
reconstructible, and that some real weighted projective spaces are
not.  In Section \ref{sec4} we review a connection between real
projective and Euclidean geometry, and state a Euclidean version of
Theorem \ref{thm4.4}.  Section \ref{sec5} applies the notion of
reconstructibility to show that in the complex case, the weighted
invariant $E_{\mathbf p}$ of a multi-color configuration can be
determined by finding the (classical) $E_{(1,1)}$ ratios for a finite
list of related two-color configurations.  However, in the real case,
Examples \ref{ex6.1}, \ref{ex6.4}, \ref{ex6.5} give pairs of
configurations with different weighted invariants in $\re P({\mathbf
p})$, but which cannot be distinguished by applying the reconstruction
method to the $E_{(1,1)}$ ratios in $\re P^1$.

\section{Weighted projective spaces}\label{sec1}

This Section reviews the definition of weighted projective spaces and
some of their elementary properties.  For the complex case, these
properties (in particular, Examples \ref{ex1.4b} and \ref{ex1.14}),
are well-known (\cite{d}, \cite{dol}); we give elementary proofs with
the intent of showing how the complex and real cases are different.
Only the objects' set-theoretic properties are of interest here, not
their structure as topological or analytic spaces, algebraic
varieties, or orbifolds.  The applications in subsequent Sections use
only $\re$ and $\co$, but to start in a general way, let $\fd$ be any
field.

\subsection{The basic construction}\label{sec1.1}
The ingredients are $n\in\na$, the vector space $\fd^{n+1}$, and a
{\underline{weight}} ${\mathbf p}=(p_0,p_1,\ldots,p_n)\in\na^{n+1}$.
Denote $\fd^{n+1}_*=\fd^{n+1}\setminus\{\mathbf{0}\}$, and for
elements ${\mathbf z}=(z_0,\ldots,z_n)$, $\mathbf
w=(w_0,\ldots,w_n)\in\fd^{n+1}_*$, define a relation $\sim_{\mathbf
  p}$ so that ${\mathbf z}\sim_{\mathbf p}{\mathbf w}$ means there
exists $\lambda\in\fd^1_*$ such
that: $$w_0=\lambda^{p_0}z_0,\ \ w_1=\lambda^{p_1}z_1, \ \ldots,
\ w_n=\lambda^{p_n}z_n.$$ This is an equivalence relation on
$\fd^{n+1}_*$ because $\fd$ is a field.

\begin{defn}\label{def1.1}
  Let $\fd P({\mathbf p})$ denote the set of equivalence classes for
  $\sim_{\mathbf p}$.  $\fd P({\mathbf p})=\fd P(p_0,\ldots,p_n)$ is
  the {\underline{weighted projective space}} corresponding to the
  weight $\mathbf p$.
\end{defn}
\begin{notation}\label{not1.2}
  Let $\pi_{\mathbf p}:\fd^{n+1}_*\to\fd P({\mathbf p})$ denote the
  canonical quotient map, defined so that $\pi_{\mathbf p}({\mathbf
  z})$ is the equivalence class of ${\mathbf z}$.  It is convenient to
  use the same letter for elements of the weighted projective space,
  and square brackets for weighted homogeneous coordinates:
  $$\pi_{\mathbf p}({\mathbf z})=z=[z_0:z_1:\ldots:z_n]_{\mathbf p}.$$
\end{notation}
\begin{example}\label{ex1.2}
  For ${\mathbf p}=(1,1,\ldots,1)$, $\fd P(1,1,\ldots,1)$ is the usual
  projective space, denoted $\fd P^n$, with homogeneous coordinates
  $\pi:(z_0,\ldots,z_n)\mapsto[z_0:\ldots:z_n]$ (omitting the
  subscripts).
\end{example}
\begin{example}\label{ex1.4b}
  For $\fd=\co$ and ${\mathbf p}=(p,p,\ldots,p)$, ${\mathbf z}$ and
  ${\mathbf w}\in\co^{n+1}_*$ are $\sim_{\mathbf p}$-equivalent if and
  only if they are related by non-zero complex scalar multiplication,
  so the following sets are exactly equal: $\co P(p,p,\ldots,p)=\co
  P(1,1,\ldots, 1)=\cpn$.
\end{example}
\begin{example}\label{ex1.6}
  For $\fd=\re$ and ${\mathbf p}=(2k+1,2k+1,\ldots,2k+1)$, ${\mathbf
  z}$ and ${\mathbf w}\in\re^{n+1}_*$ are $\sim_{\mathbf
  p}$-equivalent if and only if they are related by non-zero real
  scalar multiplication, so the following sets are exactly equal: $\re
  P(2k+1,2k+1,\ldots,2k+1)=\re P(1,1,\ldots, 1)=\rpn$.
\end{example}
\begin{example}\label{ex1.4}
  For $\fd=\re$ and ${\mathbf p}=(2k,2k,\ldots,2k)$, the restriction
  of $\pi_{\mathbf p}$ to the unit sphere $S^n\subseteq\re^{n+1}_*$ is
  a one-to-one function onto $\re P({\mathbf p})$.  It is not
  inconvenient to identify the sets: $\re P(2k,2k,\ldots,2k)=S^n$.
\end{example}

\subsection{Mappings}\label{sec1.2}

Let $\fd$ and $\ld$ be fields, and let ${\mathbf p}\in\na^{n+1}$,
${\mathbf q}\in\na^{N+1}$ be weights.  Consider any function
$\mathbf{f}:\fd^{n+1}_*\to\ld^{N+1}$.  Given
$\mathbf{z}\in\fd^{n+1}_*$, suppose $\mathbf{f}$ has the following two
properties: first,
\begin{equation}\label{eq1}
  \mathbf{f}(\mathbf{z})=(w_0,w_1,\ldots,w_N)\ne\mathbf{0},
\end{equation}
and second, for any $\lambda\in\fd^1_*$, there exists
$\mu\in\ld^1_*$ so that
\begin{equation}\label{eq2}
  \mathbf{f}(\lambda^{p_0}z_0,\lambda^{p_1}z_1,\ldots,\lambda^{p_n}z_n)=(\mu^{q_0}w_0,\mu^{q_1}w_1,\ldots,\mu^{q_N}w_N).
\end{equation}
 Then $\mathbf{f}$ also has these two properties at every point
 ${\mathbf z^\prime}\in\fd^{n+1}_*$ in the same equivalence class as
 ${\mathbf z}$, and if ${\mathbf z^\prime}\sim_{\mathbf p}{\mathbf
 z}$, then ${\mathbf f}({\mathbf z^\prime})\sim_{\mathbf q}{\mathbf
 f}({\mathbf z})$.  Let ${\mathbf U}\subseteq\fd^{n+1}_*$ be the set
 of points where $\mathbf{f}$ has the two properties, and let
 $U=\pi_{\mathbf p}({\mathbf U})$.  Then we say ``$\mathbf{f}$ induces
 a map from $\fd P({\mathbf p})$ to $\ld P({\mathbf q})$ which is
 well-defined on the set $U$,'' and denote the induced map, which
 takes $\pi_{\mathbf p}(\mathbf{z})\in U$ to $\pi_{\mathbf q}({\mathbf
 f}({\mathbf z}))$, by $f:z\mapsto f(z)$.  For $z\notin U$, $f(z)$ is
 undefined.
\begin{lem}\label{lem1.5}
  For $\mathbf f$, $f$, and $\mathbf U$ as above, and an element
  $w\in\ld P({\mathbf q})$, let ${\mathbf w}\in\ld^{N+1}_*$ be any
  representative ${\mathbf w}\in w=\pi_{\mathbf q}({\mathbf w})$.
  Then, the inverse image of $w$ is:
  \begin{equation}\label{eq3}
    f^{-1}(w)=\pi_{\mathbf p}(\{{\mathbf z}\in{\mathbf U}:{\mathbf
  f}({\mathbf z})\sim_{\mathbf q}{\mathbf w}\}).
  \end{equation}
\end{lem}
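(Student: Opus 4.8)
The plan is simply to unwind the definitions and check the two set inclusions in \eqref{eq3}; the only substantive point is that $\mathbf U$ is a union of $\sim_{\mathbf p}$-equivalence classes. This is recorded in the paragraph preceding the Lemma: if $\mathbf f$ satisfies \eqref{eq1} and \eqref{eq2} at $\mathbf z$, it satisfies them at every $\mathbf z'\sim_{\mathbf p}\mathbf z$, so $\mathbf U=\pi_{\mathbf p}^{-1}(U)$. In particular, any representative in $\fd^{n+1}_*$ of a point $z\in U$ already lies in $\mathbf U$. I would also note first that the right-hand side of \eqref{eq3} is independent of the chosen representative $\mathbf w\in w$: by transitivity of $\sim_{\mathbf q}$, replacing $\mathbf w$ by any $\mathbf w'\sim_{\mathbf q}\mathbf w$ leaves the set $\{\mathbf z\in\mathbf U:\mathbf f(\mathbf z)\sim_{\mathbf q}\mathbf w\}$ unchanged, so the formula is well-posed.

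For $\subseteq$, take $z\in f^{-1}(w)$, so $z\in U$ and $f(z)=w$; choose any $\mathbf z\in\fd^{n+1}_*$ with $\pi_{\mathbf p}(\mathbf z)=z$, which by the above lies in $\mathbf U$, and apply the definition of the induced map to get $\pi_{\mathbf q}(\mathbf f(\mathbf z))=f(z)=w=\pi_{\mathbf q}(\mathbf w)$, hence $\mathbf f(\mathbf z)\sim_{\mathbf q}\mathbf w$; thus $\mathbf z$ lies in the set on the right of \eqref{eq3}, and so $z=\pi_{\mathbf p}(\mathbf z)$ lies in its $\pi_{\mathbf p}$-image. Conversely, if $z=\pi_{\mathbf p}(\mathbf z)$ with $\mathbf z\in\mathbf U$ and $\mathbf f(\mathbf z)\sim_{\mathbf q}\mathbf w$, then $z\in\pi_{\mathbf p}(\mathbf U)=U$, so $f(z)$ is defined and equals $\pi_{\mathbf q}(\mathbf f(\mathbf z))=\pi_{\mathbf q}(\mathbf w)=w$, giving $z\in f^{-1}(w)$. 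Together these two inclusions yield \eqref{eq3}.

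There is no real obstacle here: the argument is purely formal. The one place to stay alert is the appeal to the preceding paragraph's observation that the domain $\mathbf U$ of the two properties is saturated under $\sim_{\mathbf p}$ — this is what makes it legitimate to pass freely between a point of $U$ and an arbitrary preimage in $\mathbf U$, and it is also what makes the induced map $f$, and hence $f^{-1}(w)$, a well-defined object in the first place.
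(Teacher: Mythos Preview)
Your proof is correct and follows essentially the same approach as the paper's: both arguments verify the two inclusions directly from the definitions, with the only substantive step being the $\sim_{\mathbf p}$-saturation of $\mathbf U$ (which you isolate explicitly up front, while the paper invokes it at the end of its $f^{-1}(w)\subseteq A_w$ argument via two witnesses $\mathbf x$ and $\mathbf z$). Your version is slightly cleaner and also makes explicit the well-posedness of the right-hand side with respect to the choice of $\mathbf w$, which the paper notes only parenthetically.
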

\begin{proof}
  The inverse image is $$f^{-1}(w)=\{z\in\fd P({\mathbf
  p}):z\in\pi_{\mathbf p}({\mathbf U})\mbox{ and }f(z)=w\}.$$ The
  first condition is that $\exists{\mathbf x}\in{\mathbf
  U}:z=\pi_{\mathbf p}({\mathbf x})$, and the second condition is that
  the $\sim_{\mathbf q}$-equivalence class of $\mathbf w$ is the same
  as the $\sim_{\mathbf q}$-equivalence class of ${\mathbf f}({\mathbf
  z})$ for some ${\mathbf z}\in z$.  So, $$f^{-1}(w)=\left\{z\in\fd
  P({\mathbf p}):\left(\exists{\mathbf x}\in{\mathbf U}:{\mathbf x}\in
  z\right)\mbox{ and }\left(\exists{\mathbf z}\in z:{\mathbf
  f}({\mathbf z})\sim_{\mathbf q}{\mathbf w}\right)\right\}.$$ From (\ref{eq3}),
  denote the RHS set (depending on $w$ but not the choice of $\mathbf w$): 
  \begin{eqnarray*}
    A_w&=&\pi_{\mathbf p}(\{{\mathbf z}\in{\mathbf
  U}:{\mathbf f}({\mathbf z})\sim_{\mathbf q}{\mathbf w}\})\\
    &=&\left\{z\in\fd P({\mathbf
  p}):\exists{\mathbf z}\in{\mathbf U}:\left(\pi_{\mathbf p}({\mathbf z})=z\mbox{ and }{\mathbf f}({\mathbf z})\sim_{\mathbf q}{\mathbf w}\right)\right\}.
  \end{eqnarray*}   If $z\in A_w$, letting ${\mathbf
  x}={\mathbf z}$ shows $z\in f^{-1}(w)$.  Conversely, if $z\in
  f^{-1}(w)$, then $\exists{\mathbf x}\in{\mathbf U}:{\mathbf x}\in z$
  and $\exists{\mathbf z}\in z:{\mathbf f}({\mathbf z})\sim_{\mathbf q}{\mathbf w}$.
  Since ${\mathbf x}\in{\mathbf U}$ has properties (\ref{eq1}) and
  (\ref{eq2}), and ${\mathbf z}\sim_{\mathbf p}{\mathbf x}\in z$,
  $\mathbf z$ also has the two properties, so ${\mathbf z}\in{\mathbf
  U}$, and $z\in A_w$.
\end{proof}
Similar reasoning with the above data leads to the following
equivalences:
\begin{prop}\label{prop1.6}
  A map $f:\fd P({\mathbf p})\to\ld P({\mathbf q})$ which is
  well-defined on the set $U$ is an onto map if and only if: for all
  ${\mathbf w}\in\ld^{N+1}_*$, there exists ${\mathbf z}\in{\mathbf
  U}$ such that ${\mathbf f}({\mathbf z})\sim_{\mathbf q}{\mathbf w}$.
  \boxx
\end{prop}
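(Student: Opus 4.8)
The plan is to read this off directly from Lemma~\ref{lem1.5}, since, once the description of fibers is in hand, the proposition is just the observation that a map is onto precisely when all of its fibers are nonempty. First I would record the tautology that $f$, viewed as a map defined on $U$, is onto $\ld P({\mathbf q})$ if and only if $f^{-1}(w)\ne\emptyset$ for every $w\in\ld P({\mathbf q})$; here it is relevant that, by the formula in Lemma~\ref{lem1.5}, $f^{-1}(w)$ is automatically a subset of $U=\pi_{\mathbf p}({\mathbf U})$. The real task is then to convert the condition ``$f^{-1}(w)\ne\emptyset$ for all $w\in\ld P({\mathbf q})$'' into the stated condition, which is phrased in terms of vectors ${\mathbf w}\in\ld^{N+1}_*$ rather than points $w$ of the weighted projective space.

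For the conversion I would fix $w\in\ld P({\mathbf q})$ and choose any representative ${\mathbf w}\in\ld^{N+1}_*$ with $w=\pi_{\mathbf q}({\mathbf w})$, which is possible because $\pi_{\mathbf q}$ is defined on all of $\ld^{N+1}_*$. Applying (\ref{eq3}) gives $f^{-1}(w)=\pi_{\mathbf p}(\{{\mathbf z}\in{\mathbf U}:{\mathbf f}({\mathbf z})\sim_{\mathbf q}{\mathbf w}\})$, and since $\pi_{\mathbf p}$ maps a set to $\emptyset$ exactly when that set is $\emptyset$, we get that $f^{-1}(w)\ne\emptyset$ if and only if there exists ${\mathbf z}\in{\mathbf U}$ with ${\mathbf f}({\mathbf z})\sim_{\mathbf q}{\mathbf w}$. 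I would also point out, as already noted in the proof of Lemma~\ref{lem1.5}, that this last condition depends only on $w$ and not on the chosen representative ${\mathbf w}$, by transitivity of $\sim_{\mathbf q}$, so the assertion is well-posed.

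Finally I would dispatch the quantifiers. Since $\pi_{\mathbf q}:\ld^{N+1}_*\to\ld P({\mathbf q})$ is surjective, letting $w$ run over $\ld P({\mathbf q})$ is the same as letting ${\mathbf w}$ run over all of $\ld^{N+1}_*$; together with the representative-independence just mentioned, the statement ``for every $w\in\ld P({\mathbf q})$, $f^{-1}(w)\ne\emptyset$'' becomes precisely ``for all ${\mathbf w}\in\ld^{N+1}_*$, there exists ${\mathbf z}\in{\mathbf U}$ with ${\mathbf f}({\mathbf z})\sim_{\mathbf q}{\mathbf w}$,'' which is the claim. I do not expect a genuine obstacle; the only thing to watch is that ``${\mathbf f}({\mathbf z})\sim_{\mathbf q}{\mathbf w}$'' is really a condition on the class of ${\mathbf w}$, so that moving between the ``$w$'' and ``${\mathbf w}$'' formulations introduces no gap.
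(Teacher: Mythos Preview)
Your proposal is correct and matches the paper's approach: the paper actually omits the proof entirely, stating only that ``similar reasoning with the above data'' (i.e., the reasoning behind Lemma~\ref{lem1.5}) yields the equivalence, and then placing the \boxx. Your write-up is precisely that similar reasoning spelled out---reading the fiber description from Lemma~\ref{lem1.5}, noting that $\pi_{\mathbf p}$ preserves nonemptiness, and using surjectivity of $\pi_{\mathbf q}$ to pass between points $w$ and representatives ${\mathbf w}$---so there is nothing to compare.
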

\begin{prop}\label{prop1.7}
  A map $f:\fd P({\mathbf p})\to\ld P({\mathbf q})$ which is
  well-defined on the set $U$ is a one-to-one map if and only if: for
  all ${\mathbf z}$, ${\mathbf z^\prime}\in{\mathbf U}$, if ${\mathbf
  f}({\mathbf z})\sim_{\mathbf q}{\mathbf f}({\mathbf z^\prime})$,
  then ${\mathbf z}\sim_{\mathbf p}{\mathbf z^\prime}$.  \boxx
\end{prop}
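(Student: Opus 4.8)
The plan is to prove both implications directly, by unwinding the definition of the induced map $f$ (which sends $\pi_{\mathbf p}(\mathbf z)\in U$ to $\pi_{\mathbf q}(\mathbf f(\mathbf z))$) together with the observation, already recorded in the discussion preceding Lemma \ref{lem1.5} and reused in its proof, that $\mathbf U$ is a union of $\sim_{\mathbf p}$-equivalence classes; hence every representative of a point of $U=\pi_{\mathbf p}(\mathbf U)$ automatically lies in $\mathbf U$. No topology or algebra is involved — this is purely a bookkeeping statement about quotient sets, in the same spirit as Proposition \ref{prop1.6}.

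For the forward implication I would assume $f$ is one-to-one and take $\mathbf z,\mathbf z^\prime\in\mathbf U$ with $\mathbf f(\mathbf z)\sim_{\mathbf q}\mathbf f(\mathbf z^\prime)$. Then $\pi_{\mathbf p}(\mathbf z)$ and $\pi_{\mathbf p}(\mathbf z^\prime)$ both lie in $U$, and $f(\pi_{\mathbf p}(\mathbf z))=\pi_{\mathbf q}(\mathbf f(\mathbf z))=\pi_{\mathbf q}(\mathbf f(\mathbf z^\prime))=f(\pi_{\mathbf p}(\mathbf z^\prime))$; injectivity of $f$ forces $\pi_{\mathbf p}(\mathbf z)=\pi_{\mathbf p}(\mathbf z^\prime)$, which is exactly $\mathbf z\sim_{\mathbf p}\mathbf z^\prime$.

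For the converse I would assume the stated condition and take $z,z^\prime\in U$ with $f(z)=f(z^\prime)$. Picking any representatives $\mathbf z\in z$ and $\mathbf z^\prime\in z^\prime$, the saturation of $\mathbf U$ guarantees $\mathbf z,\mathbf z^\prime\in\mathbf U$, so $f(z)=\pi_{\mathbf q}(\mathbf f(\mathbf z))$ and $f(z^\prime)=\pi_{\mathbf q}(\mathbf f(\mathbf z^\prime))$ are defined and equal, i.e.\ $\mathbf f(\mathbf z)\sim_{\mathbf q}\mathbf f(\mathbf z^\prime)$. The hypothesis then gives $\mathbf z\sim_{\mathbf p}\mathbf z^\prime$, hence $z=\pi_{\mathbf p}(\mathbf z)=\pi_{\mathbf p}(\mathbf z^\prime)=z^\prime$.

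There is no genuine obstacle; the only point needing care is not inadvertently choosing a representative of a point of $U$ that escapes $\mathbf U$, which cannot occur precisely because $\mathbf U$ is a union of equivalence classes. Since this is the identical piece of reasoning used for Proposition \ref{prop1.6}, I would either present the two proofs in parallel or simply note that this one follows by the same argument.
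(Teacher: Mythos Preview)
Your proposal is correct and matches the paper's treatment: the paper gives no explicit proof of this proposition, merely prefacing it (together with Proposition~\ref{prop1.6}) with the remark ``Similar reasoning with the above data leads to the following equivalences'' and appending the end-of-proof symbol. You have simply spelled out that similar reasoning, including the key point that $\mathbf U$ is saturated under $\sim_{\mathbf p}$, which is exactly the ingredient the paper establishes and uses in the proof of Lemma~\ref{lem1.5}.
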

\begin{example}\label{ex1.10}
  For $m\in\na$, consider two weights, ${\mathbf
  q}=(q_0,q_1,\ldots,q_n)$ and ${\mathbf p}=(mq_0,mq_1,\ldots,mq_n)$,
  and let ${\mathbf f}$ be the inclusion
  $\fd^{n+1}_*\to\fd^{n+1}:{\mathbf z}\mapsto{\mathbf z}$.  ${\mathbf
  f}$ clearly satisfies (\ref{eq1}) at every point ${\mathbf
  z}\in\fd^{n+1}_*$, and also (\ref{eq2}) with $\mu=\lambda^m$, so
  ${\mathbf U}=\fd^{n+1}_*$.  The induced map $f:\fd P({\mathbf
  p})\to\fd P({\mathbf q})$ is well-defined on $U=\fd P({\mathbf p})$,
  and is an onto map as in Proposition \ref{prop1.6}.  $f$ is also
  one-to-one if it satisfies the condition of Proposition
  \ref{prop1.7}: for all ${\mathbf z}$, ${\mathbf
  z^\prime}\in\fd^{n+1}_*$, if
  $(z_0^\prime,\ldots,z_n^\prime)=(\lambda^{q_0}z_0,\ldots,\lambda^{q_n}z_n)$
  for some $\lambda\in\fd^1_*$, then there exists $\mu\in\fd^1_*$ such
  that
  $(z_0^\prime,\ldots,z_n^\prime)=(\mu^{mq_0}z_0,\ldots,\mu^{mq_n}z_n)$.
  So, if $\fd$ and $m$ have the property that $\forall\lambda\ne0$
  $\exists\mu:\mu^m=\lambda$, then $f$ is one-to-one; for example,
  this happens for $\fd=\co$ and any $m$, and also for $\fd=\re$ and
  odd $m$.  Another situation in which $f$ is one-to-one is the case
  where $\fd=\re$ and all the integers $q_0$, \ldots, $q_n$ are even:
  for any $\lambda\ne0$, let $\mu=|\lambda|^{1/m}$, then for
  $k=0,\ldots,n$,
  $\mu^{mq_k}=(|\lambda|^{1/m})^{mq_k}=|\lambda|^{q_k}=\lambda^{q_k}$.
\end{example}
  In each of the cases mentioned in Example \ref{ex1.10} where the
  induced map $f$ is one-to-one, we have ${\mathbf z}\sim_{\mathbf
  p}{\mathbf z^\prime}\iff{\mathbf z}\sim_{\mathbf q}{\mathbf
  z^\prime}$, so the equivalence classes are the same, $f$ is the
  identity map, and these sets are equal: $\fd P({\mathbf p})=\fd
  P({\mathbf q})$.
\begin{example}\label{ex1.11b}
  The inclusion ${\mathbf f}:\re^{n+1}_*\to\re^{n+1}$ as in Example
  \ref{ex1.10} induces a well-defined, onto map $f:\re
  P(2,2,\ldots,2)\to\re P(1,1,\ldots,1)$, but $f$ is not one-to-one.
  This map $f$ is the usual two-to-one covering $S^n\to\re P^n$ (the
  ``antipodal identification'').
\end{example}
\begin{thm}\label{thm1.12}
  For any weight ${\mathbf q}=(q_0,\ldots,q_n)$, let ${\mathbf
  p}=(2q_0,\ldots,2q_n)$.  Let $f:\re P({\mathbf p})\to\re P({\mathbf
  q})$ be induced by the inclusion ${\mathbf f}$ as in Example
  \ref{ex1.10}.  If $q_j$ is odd, then the restriction of $f$ to the
  set $\pi_{\mathbf p}(\{{\mathbf z}:z_j\ne0\})$ is two-to-one.
\end{thm}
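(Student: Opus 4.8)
The plan is to compute the fiber of $f$ over a point $w\in\re P(\mathbf q)$ explicitly via Lemma~\ref{lem1.5}, show it always has exactly two elements, and observe that those elements automatically lie in $\pi_{\mathbf p}(\{\mathbf z:z_j\ne0\})$, so that the restricted map has the same two-element fibers over its image.

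First I would specialize Example~\ref{ex1.10} to $m=2$: the inclusion $\mathbf f$ satisfies (\ref{eq1}) and (\ref{eq2}) everywhere, so $\mathbf U=\re^{n+1}_*$, $f$ is defined on all of $\re P(\mathbf p)$, and $\mathbf f(\mathbf z)=\mathbf z$. The subset $\pi_{\mathbf p}(\{\mathbf z:z_j\ne0\})$ is well-defined because $\mathbf z\sim_{\mathbf p}\mathbf z'$ forces $z_j'=\lambda^{2q_j}z_j$, and the image of the restricted map is exactly the set of $w$ possessing a representative (equivalently, all of whose representatives have) nonzero $j$-th coordinate. Fix such a $w$ and a representative $\mathbf w$ with $w_j\ne0$. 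By Lemma~\ref{lem1.5},
$$f^{-1}(w)=\pi_{\mathbf p}(S_{\mathbf w}),\qquad S_{\mathbf w}:=\{\mathbf z\in\re^{n+1}_*:\mathbf z\sim_{\mathbf q}\mathbf w\}=\{(\lambda^{q_0}w_0,\ldots,\lambda^{q_n}w_n):\lambda\in\re^1_*\}.$$
Every element of $S_{\mathbf w}$ has $j$-th coordinate $\lambda^{q_j}w_j\ne0$, so $f^{-1}(w)\subseteq\pi_{\mathbf p}(\{\mathbf z:z_j\ne0\})$; it therefore suffices to prove that $\pi_{\mathbf p}(S_{\mathbf w})$ has exactly two elements.

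The heart of the argument is the parametrization $g:\re^1_*\to S_{\mathbf w}$, $g(\lambda)=(\lambda^{q_0}w_0,\ldots,\lambda^{q_n}w_n)$, together with the fact that oddness of $q_j$ makes $t\mapsto t^{q_j}$ a bijection of $\re$. First, $g$ is injective: comparing $j$-th coordinates of $g(\lambda)$ and $g(\lambda')$ gives $\lambda^{q_j}=(\lambda')^{q_j}$, hence $\lambda=\lambda'$. Next I would show $g(\lambda)\sim_{\mathbf p}g(\lambda')$ if and only if $\lambda$ and $\lambda'$ have the same sign: if they do, then $\nu:=(\lambda'/\lambda)^{1/2}$ works simultaneously in every coordinate, since $\nu^{2q_k}\lambda^{q_k}w_k=(\lambda'/\lambda)^{q_k}\lambda^{q_k}w_k=(\lambda')^{q_k}w_k$ for all $k$; conversely, if $g(\lambda)\sim_{\mathbf p}g(\lambda')$ via some $\nu$, then the $j$-th coordinate gives $(\lambda')^{q_j}=(\nu^2\lambda)^{q_j}$, so $\lambda'=\nu^2\lambda$ and the signs agree. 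Hence $S_{\mathbf w}$ falls into exactly the two $\sim_{\mathbf p}$-classes $\pi_{\mathbf p}(g((0,\infty)))$ and $\pi_{\mathbf p}(g((-\infty,0)))$, and these are distinct: equality would force $-w_j=(-1)^{q_j}w_j=\nu^{2q_j}w_j$ with $\nu^{2q_j}>0$, which is impossible. Thus $f^{-1}(w)$ has exactly two elements and $f|_{\pi_{\mathbf p}(\{\mathbf z:z_j\ne0\})}$ is two-to-one.

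The only delicate point is the bookkeeping inside the relation $g(\lambda)\sim_{\mathbf p}g(\lambda')$: the constraint is vacuous at coordinates where $w_k=0$, and at the remaining coordinates one needs a \emph{single} scalar $\nu$ solving all the equations $(\lambda')^{q_k}=\nu^{2q_k}\lambda^{q_k}$ at once. The uniform choice $\nu=(\lambda'/\lambda)^{1/2}$ handles this, and oddness of $q_j$ at the one coordinate $j$ does double duty — it makes $g$ injective and it pins down the sign of $\lambda$; everything else is a routine unwinding of the definitions of $\sim_{\mathbf p}$ and $\sim_{\mathbf q}$.
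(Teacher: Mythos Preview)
Your proof is correct and follows essentially the same approach as the paper: compute $f^{-1}(w)$ via Lemma~\ref{lem1.5} as the $\pi_{\mathbf p}$-image of the $\sim_{\mathbf q}$-orbit $\{(\mu^{q_0}w_0,\ldots,\mu^{q_n}w_n):\mu\in\re^1_*\}$, and use the odd-$q_j$ coordinate to show two such vectors are $\sim_{\mathbf p}$-equivalent if and only if the parameters have the same sign. You are slightly more explicit than the paper in checking that the fiber lies in the restricted domain and that the two classes are genuinely distinct, but the argument is the same.
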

\begin{proof}
  Take any ${\mathbf w}\in\re^{n+1}_*$ with $w_j\ne0$, and let
  $w=\pi_{\mathbf q}({\mathbf w})$. By Lemma \ref{lem1.5},
  \begin{eqnarray*}
    f^{-1}(w)&=&\pi_{\mathbf p}(\{{\mathbf z}\in\re^{n+1}_*:{\mathbf z}\sim_{\mathbf q}{\mathbf w}\})\\
    &=&\pi_{\mathbf p}(\{(\mu^{q_0}w_0,\ldots,\mu^{q_n}w_n):\mu\in\re^1_*\}).
  \end{eqnarray*}
  Two points $(\mu_1^{q_0}w_0,\ldots,\mu_1^{q_n}w_n)$,
  $(\mu_2^{q_0}w_0,\ldots,\mu_2^{q_n}w_n)$ are $\sim_{\mathbf
  p}$-equivalent if and only if there exists $\lambda\in\re^1_*$ such
  that
  \begin{equation}\label{eq15}
    \mu_1^{q_0}w_0=\lambda^{2q_0}\mu_2^{q_0}w_0, \ldots, \mu_1^{q_n}w_n=\lambda^{2q_n}\mu_2^{q_n}w_n.
  \end{equation}
  For $w_j\ne0$ and $q_j$ odd,
  $\mu_1^{q_j}w_j=\lambda^{2q_j}\mu_2^{q_j}w_j\iff\lambda^2=\mu_1/\mu_2$,
  which is equivalent to the system of equations (\ref{eq15}).  So,
  the two points are $\sim_{\mathbf p}$-equivalent if and only if
  $\mu_1$ and $\mu_2$ have the same sign: there are two $\sim_{\mathbf
  p}$-equivalence classes.
\end{proof}
\begin{example}\label{ex1.13}
  If all the $q_j$ are odd, then $f$ as in Theorem \ref{thm1.12} is
  globally two-to-one.  Example \ref{ex1.11b} is a special case.  An
  example with the $q_j$ not all odd is the map $f:\re P(4,2)\to\re
  P(2,1)$.  For this $f$,
  $f^{-1}([1:z_1]_{(2,1)})=\{[1:z_1]_{(4,2)},[1:-z_1]_{(4,2)}\}$, a
  two-element set for $z_1\ne0$, but a singleton for $z_1=0$.
\end{example}
\begin{example}\label{ex1.11}
  For $m\in\na$, consider two weights, ${\mathbf
  q}=(q_0,q_1,q_2,\ldots,q_n)$ and ${\mathbf
  p}=(q_0,mq_1,mq_2,\ldots,mq_n)$, and let ${\mathbf f}$ be the
  polynomial map $${\mathbf
  f}:\fd^{n+1}_*\to\fd^{n+1}:(z_0,z_1,z_2,\ldots,z_n)\mapsto(z_0^m,z_1,z_2,\ldots,z_n).$$
  ${\mathbf f}$ clearly satisfies (\ref{eq1}) at every point ${\mathbf
  z}\in\fd^{n+1}_*$, and also (\ref{eq2}) with $\mu=\lambda^m$, so
  ${\mathbf U}=\fd^{n+1}_*$.  The induced map $f:\fd P({\mathbf
  p})\to\fd P({\mathbf q})$ is well-defined on $U=\fd P({\mathbf p})$.
  $f$ is an onto map if it satisfies the condition of Proposition
  \ref{prop1.6}: for all ${\mathbf w}=(w_0,\ldots,w_n)\in\fd^{n+1}_*$,
  there exist ${\mathbf z}$ and $\lambda$ such that
  $(w_0,\ldots,w_n)=(\lambda^{q_0}z_0^m,\ldots,\lambda^{q_n}z_n)$.
  For example, if $\fd=\co$, or if $\fd=\re$ and $m$ is odd, then
  given ${\mathbf w}$, one can choose $\lambda=1$, any $z_0$ with
  $z_0^m=w_0$, and $z_k=w_k$ for $k=1,\ldots,n$.  Another situation in
  which $f$ is onto is the case where $\fd=\re$ and $q_0$ is odd: for
  $w_0\ge0$, make the same choices mentioned in the previous case, and
  for $w_0<0$, choose $\lambda=-1$, any $z_0$ with
  $z_0^m=(-1)^{q_0}w_0=|w_0|$, and $z_k=w_k/(-1)^{q_k}$ for
  $k=1,\ldots,n$.
\end{example}
\begin{example}\label{ex1.12}
  The polynomial map ${\mathbf
  f}:\re^2_*\to\re^2:(z_0,z_1)\mapsto(z_0^2,z_1)$ induces a
  well-defined map $f:\re P(2,2)\to\re P(2,1)$ as in Example
  \ref{ex1.11}, but the induced map is not onto.  The point
  $[-1:1]_{\mathbf q}$ is not in the image of $f$; there is no
  $(z_0,z_1)\in\re^2_*$ such that $(z_0^2,z_1)\sim_{\mathbf q}(-1,1)$.
\end{example}
\begin{lem}\label{lem1.14}
  For $w_0\in\co^1_*$, $N$, $P\in\na$, suppose
  $\{\zeta_0,\ldots,\zeta_{N-1}\}$ are the $N$ distinct complex roots
  of the equation $\zeta^N=w_0$.  Then the number of distinct elements
  in the set $\{\zeta_0^P,\ldots,\zeta_{N-1}^P\}$ is $\lcm(P,N)/P$.
\end{lem}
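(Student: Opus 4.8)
The plan is to reduce the count to an elementary calculation in the cyclic group of $N$th roots of unity. First I would fix a primitive $N$th root of unity $\omega=e^{2\pi i/N}$ and one particular root $\zeta_0$ of $\zeta^N=w_0$ (such a $\zeta_0$ exists since $\co$ is algebraically closed and $w_0\ne0$, and $\zeta_0\ne0$). Then every solution of $\zeta^N=w_0$ has the form $\zeta_0\omega^k$, and as $k$ runs through $0,1,\ldots,N-1$ these are exactly the $N$ distinct roots, so after relabeling we may assume $\zeta_k=\zeta_0\omega^k$.

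Raising to the $P$th power gives $\zeta_k^P=\zeta_0^P\omega^{kP}$. Since $\zeta_0^P$ is a fixed nonzero constant, the map $x\mapsto\zeta_0^P x$ is a bijection of $\co$, so the number of distinct elements of $\{\zeta_0^P,\ldots,\zeta_{N-1}^P\}$ equals the number of distinct values of $\omega^{kP}$ as $k$ ranges over $\{0,1,\ldots,N-1\}$, equivalently over a complete residue system modulo $N$.

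Next comes the key step: for integers $k_1,k_2$, one has $\omega^{k_1P}=\omega^{k_2P}$ if and only if $N\mid(k_1-k_2)P$, and, writing $g=\gcd(N,P)$, this holds if and only if $\tfrac{N}{g}\mid(k_1-k_2)$, using that $\gcd(\tfrac{N}{g},\tfrac{P}{g})=1$. Hence the distinct values of $\omega^{kP}$ are indexed precisely by the residues modulo $d:=\tfrac{N}{g}$, so there are exactly $d$ of them. (Equivalently, $\omega^P$ generates a cyclic subgroup of the $N$th roots of unity of order $\tfrac{N}{\gcd(N,P)}$.) Finally, since $\lcm(P,N)=\tfrac{PN}{\gcd(P,N)}$, we get $\lcm(P,N)/P=\tfrac{N}{\gcd(P,N)}=d$, which is the asserted count.

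I do not expect a genuine obstacle here; the argument is elementary. The only point needing a little care is the divisibility equivalence $N\mid(k_1-k_2)P\iff\tfrac{N}{\gcd(N,P)}\mid(k_1-k_2)$, which rests on coprimality of $\tfrac{N}{\gcd(N,P)}$ and $\tfrac{P}{\gcd(N,P)}$, and the bookkeeping identity $\lcm(P,N)/P=N/\gcd(P,N)$.
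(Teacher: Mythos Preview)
Your argument is correct and follows essentially the same approach as the paper: the paper writes the roots in polar form $\zeta_j=\rho^{1/N}e^{i(\theta+2\pi j)/N}$ and finds the least $j$ with $jP/N\in\na$, while you phrase the same computation in terms of a primitive $N$th root of unity $\omega$ and the order of $\omega^P$ in the cyclic group. The divisibility step $N\mid(k_1-k_2)P\iff\tfrac{N}{\gcd(N,P)}\mid(k_1-k_2)$ and the identity $\lcm(P,N)/P=N/\gcd(N,P)$ are exactly the content of the paper's ``smallest $j$'' claim, so the two proofs are the same idea in slightly different notation.
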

\begin{proof}
  In polar form, $w_0=\rho e^{i\theta}$ for a unique $\rho>0$,
  $\theta\in[0,2\pi)$.  By re-labeling if necessary,
  $\zeta_j^P=\rho^{P/N}e^{i(\theta+2\pi j)P/N}$ for $j=0,\ldots,N-1$.
  Let $j$ be the smallest integer such that $jP/N\in\na$.  It follows
  that $j=\lcm(P,N)/P$, the elements $\zeta_k^P$ are distinct for
  $k=0,\ldots,j-1$, and $\zeta_k^P=\zeta_{k+j}^P$.
\end{proof}
\begin{example}\label{ex1.14}
  Let $\fd=\co$.  Consider a weight where $n$ of the $n+1$ integers
  have a common factor $m$ --- without loss of generality, ${\mathbf
  p}=(q_0,mq_1,mq_2,\ldots,mq_n)$ as in Example \ref{ex1.11}. Suppose
  further that the integers $m$ and $q_0$ are relatively prime (have
  no common factor $>1$) --- this can be achieved without changing the
  set $\co P({\mathbf p})$, by dividing out any common factor as in
  Example \ref{ex1.10}.  Now let ${\mathbf
  q}=(q_0,q_1,q_2,\ldots,q_n)$; then the map ${\mathbf
  f}(z_0,z_1,z_2,\ldots,z_n)=(z_0^m,z_1,z_2,\ldots,z_n)$ from Example
  \ref{ex1.11} induces a well-defined, onto map $f:\co P({\mathbf
  p})\to\co P({\mathbf q})$.  It is also one-to-one: to establish this
  as in Proposition \ref{prop1.7}, we have to show that for any
  ${\mathbf z}$, ${\mathbf z^\prime}\in\co^{n+1}_*$, if there exists
  $\lambda\ne0$ such that
  \begin{equation}\label{eq4}
    (\lambda^{q_0}z_0^m,\lambda^{q_1}z_1,\ldots,\lambda^{q_n}z_n)=((z_0^\prime)^m,z_1^\prime,\ldots,z_n^\prime),
  \end{equation}
  then there exists $\mu\ne0$ so that
  \begin{equation}\label{eq5}
    (\mu^{q_0}z_0,\mu^{mq_1}z_1,\ldots,\mu^{mq_n}z_n)=(z_0^\prime,z_1^\prime,\ldots,z_n^\prime).
  \end{equation}
  The algebra problem is: given $\lambda$, ${\mathbf z}$, ${\mathbf
    z^\prime}$, find $\mu$.  If $z_0^\prime=0$, then $z_0=0$ and we
    can pick any $\mu$ satisfying $\mu^m=\lambda$.  If
    $z_0^\prime\ne0$, then $z_0\ne0$, and there are $m$ different
    roots $\{\mu_k:k=0,\ldots,m-1\}$ satisfying $\mu_k^m=\lambda$.
    For $j=1,\ldots,n$, each $\mu_k$ satisfies
    $\mu_k^{mq_j}z_j=\lambda^{q_j}z_j=z_j^\prime$.  Each $\mu_k$ also
    satisfies
  $$(\mu_k^{q_0})^m=\mu_k^{mq_0}=\lambda^{q_0}=(z_0^\prime)^m/z_0^m=(z_0^\prime/z_0)^m,$$
  so each element of the set
  $R_1=\{\mu_0^{q_0},\ldots,\mu_{m-1}^{q_0}\}$ is also one of the $m$
  elements of the set $R_2=\{\xi:\xi^m=(z_0^\prime/z_0)^m\}$.  One of
  the elements of $R_2$ is $z_0^\prime/z_0$.  Using the assumption
  that $m$ and $q_0$ are relatively prime and Lemma \ref{lem1.14},
  $R_1$ has $m$ distinct elements, so there is some $k$ such that
  $\mu_k^{q_0}=z_0^\prime/z_0$.  This $\mu_k$ is the $\mu$ required in
  (\ref{eq5}), to show $f$ is one-to-one.
\end{example}

\begin{example}\label{ex1.15}
  Let $\fd=\re$, and consider, as in Example \ref{ex1.14}, a weight
  ${\mathbf p}=(q_0,mq_1,mq_2,\ldots,mq_n).$ Here we assume $m$ is odd
  but make no assumption on $q_0$.  Now let ${\mathbf
    q}=(q_0,q_1,q_2,\ldots,q_n)$; then the map ${\mathbf
    f}(z_0,z_1,z_2,\ldots,z_n)=(z_0^m,z_1,z_2,\ldots,z_n)$ from
  Example \ref{ex1.11} induces a well-defined, onto map $f:\re
  P({\mathbf p})\to\re P({\mathbf q})$.  It is also one-to-one: the
  algebra problem is to solve the same equations (\ref{eq4}),
  (\ref{eq5}), for a real $\mu$ in terms of real ${\mathbf z}$,
  ${\mathbf z^\prime}$, $\lambda$.  Given $\lambda\ne0$, let $\mu$ be
  the unique real solution of $\mu^m=\lambda$.  Then, for
  $j=1,\ldots,n$, $\mu^{mq_j}z_j=\lambda^{q_j}z_j=z_j^\prime$, and
  $(\mu^{q_0}z_0)^m=\lambda^{q_0}z_0^m=(z_0^\prime)^m\implies\mu^{q_0}z_0=z_0^\prime$.
\end{example}

\begin{example}\label{ex1.16}
  Let $\fd=\re$, and consider, as in Example \ref{ex1.15}, a weight
  ${\mathbf p}=(q_0,mq_1,mq_2,\ldots,mq_n).$ Here we assume $m$ is
  even, $q_0$ is odd, and all $q_1,\ldots,q_n$ are even.  Now let
  ${\mathbf q}=(q_0,q_1,q_2,\ldots,q_n)$; then the map ${\mathbf
    f}(z_0,z_1,z_2,\ldots,z_n)=(z_0^m,z_1,z_2,\ldots,z_n)$ from
  Example \ref{ex1.11} induces a well-defined, onto map $f:\re
  P({\mathbf p})\to\re P({\mathbf q})$.  It is also one-to-one: the
  algebra problem is to solve (\ref{eq4}), (\ref{eq5}), for a real
  $\mu$ in terms of real ${\mathbf z}$, ${\mathbf z^\prime}$,
  $\lambda$.  Given $\lambda\ne0$, the equation $\mu^m=|\lambda|$ has
  exactly two real solutions,
  $\{\mu_1=|\lambda|^{1/m},\mu_2=-|\lambda|^{1/m}\}$.  Then, for
  $k=1,2$, $j=1,\ldots,n$,
  $\mu_k^{mq_j}z_j=|\lambda|^{q_j}z_j=\lambda^{q_j}z_j=z_j^\prime$.
  For $k=1,2$,
  $(\mu_k^{q_0}z_0)^m=|\lambda|^{q_0}z_0^m=|\lambda^{q_0}z_0^m|=(z_0^\prime)^m$,
  so the set $\{\mu_1^{q_0}z_0,\mu_2^{q_0}z_0=-\mu_1^{q_0}z_0\}$ is
  contained in the set $\{z_0^\prime,-z_0^\prime\}$, and one of the
  two roots is the required $\mu$ satisfying
  $\mu^{q_0}z_0=z_0^\prime$.
\end{example}
\begin{example}\label{ex1.17}
  For an even number $p_1$, the function ${\mathbf
  f}(z_0,z_1)=(z_0^{p_1},z_1)$ induces a well-defined, onto map $f:\re
  P(1,p_1)\to\re P(1,1)$ as in Example \ref{ex1.11}.  The induced map
  is not one-to-one: ${\mathbf f}(0,1)=(0,1)\sim_{\mathbf q}{\mathbf
  f}(0,-1)=(0,-1)$, but $(0,1)\not\sim_{\mathbf p}(0,-1)$.
\end{example}

\section{Reconstructibility}\label{sec2}

Given a weight ${\mathbf p}$, and two indices $i<j$ in
$\{0,1,\ldots,n\}$, consider numbers $a_{ij}$, $b_{ij}\in\na$ and a
mapping ${\mathbf c}_{ij}:\fd^{n+1}_*\to\fd^2$ defined by the formula
$${\mathbf
c}_{ij}(z_0,z_1,\ldots,z_i,\ldots,z_j,\ldots,z_n)=(z_i^{a_{ij}},z_j^{b_{ij}}).$$The
function ${\mathbf c}_{ij}$ satisfies (\ref{eq1}) on the complement of
the set $\{{\mathbf z}:z_i=z_j=0\}$, and if the products are equal:
$p_ia_{ij}=p_jb_{ij}$, then it also satisfies (\ref{eq2}) for weights
${\mathbf p}$ and ${\mathbf q}=(1,1)$.
\begin{notation}\label{not1.18}
  For ${\mathbf p}$, $a_{ij}$, $b_{ij}$ as above, the function
  ${\mathbf c}_{ij}$ induces a map
  \begin{eqnarray*}
    c_{ij}:\fd P({\mathbf p})&\to&\fd P^1:\\
    \left[z_0:z_1:\ldots:z_i:\ldots:z_j:\ldots:z_n\right]_{\mathbf
    p}&\mapsto&\left[z_i^{a_{ij}}:z_j^{b_{ij}}\right],
  \end{eqnarray*}
  which is well-defined on the complement of the set
  $\{[z_0:\ldots:z_n]_{\mathbf p}:z_i=z_j=0\}$.  We call such a map an
  {\underline{axis projection}}.
\end{notation}
\begin{lem}\label{lem1.19}
  Given a weight $\mathbf p$ and indices $i$, $j$, let
  $\ell_{ij}=\lcm(p_i,p_j)$.  Then 
  \begin{equation}\label{eq6}
    h_{ij}:\fd P({\mathbf p})\to\fd
  P^1:z\mapsto[z_i^{\ell_{ij}/p_i}:z_j^{\ell_{ij}/p_j}]
  \end{equation} is an axis
  projection.  For any axis projection $c_{ij}$ as in Notation
  \ref{not1.18}, there exists $k_{ij}\in\na$ such that $c_{ij}$
  factors as $c_{ij}=G_{ij}\circ h_{ij}$, where the function
  $$G_{ij}:\fd P^1\to\fd
  P^1:[w_0:w_1]\mapsto[w_0^{k_{ij}}:w_1^{k_{ij}}]$$ is well-defined on
  $\fd P^1$.
\end{lem}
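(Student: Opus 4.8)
The plan is to show directly that $h_{ij}$ is an axis projection, and then to solve for the exponent $k_{ij}$ that makes the factorization work. For the first part, I would set $a_{ij}=\ell_{ij}/p_i$ and $b_{ij}=\ell_{ij}/p_j$; these are positive integers because $\ell_{ij}=\lcm(p_i,p_j)$ is a common multiple of $p_i$ and $p_j$. Then $p_i a_{ij}=\ell_{ij}=p_j b_{ij}$, so the defining condition $p_i a_{ij}=p_j b_{ij}$ from the discussion before Notation \ref{not1.18} holds, and $h_{ij}$ is the axis projection associated to this choice of exponents. (The well-definedness on the complement of $\{z_i=z_j=0\}$ is exactly the content of that discussion applied to ${\mathbf p}$ and ${\mathbf q}=(1,1)$.)

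For the factorization, suppose $c_{ij}$ has exponents $a_{ij}$, $b_{ij}$ with $p_i a_{ij}=p_j b_{ij}$. The key arithmetic observation is that $\ell_{ij}=\lcm(p_i,p_j)$ divides the common value $p_i a_{ij}=p_j b_{ij}$: indeed this common value is a multiple of both $p_i$ and $p_j$, hence a multiple of their least common multiple. So define $k_{ij}=p_i a_{ij}/\ell_{ij}=p_j b_{ij}/\ell_{ij}\in\na$. Then $a_{ij}=k_{ij}\ell_{ij}/p_i=k_{ij}(\ell_{ij}/p_i)$ and similarly $b_{ij}=k_{ij}(\ell_{ij}/p_j)$, so on representatives,
\begin{equation*}
  {\mathbf c}_{ij}({\mathbf z})=(z_i^{a_{ij}},z_j^{b_{ij}})=\bigl((z_i^{\ell_{ij}/p_i})^{k_{ij}},(z_j^{\ell_{ij}/p_j})^{k_{ij}}\bigr),
\end{equation*}
which is exactly $(w_0^{k_{ij}},w_1^{k_{ij}})$ for $(w_0,w_1)={\mathbf h}_{ij}({\mathbf z})$. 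Passing to equivalence classes gives $c_{ij}=G_{ij}\circ h_{ij}$ with $G_{ij}[w_0:w_1]=[w_0^{k_{ij}}:w_1^{k_{ij}}]$.

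It remains to check that $G_{ij}$ is well-defined on all of $\fd P^1$, i.e.\ that the map ${\mathbf g}(w_0,w_1)=(w_0^{k_{ij}},w_1^{k_{ij}})$ from $\fd^2_*\to\fd^2$ satisfies \eqref{eq1} and \eqref{eq2} at every point with $\mathbf q = \mathbf p = (1,1)$: property \eqref{eq1} holds because if $(w_0,w_1)\ne\mathbf 0$ then at least one coordinate is nonzero and raising it to the power $k_{ij}$ keeps it nonzero; property \eqref{eq2} holds with $\mu=\lambda^{k_{ij}}$, since ${\mathbf g}(\lambda w_0,\lambda w_1)=(\lambda^{k_{ij}}w_0^{k_{ij}},\lambda^{k_{ij}}w_1^{k_{ij}})=(\mu w_0^{k_{ij}},\mu w_1^{k_{ij}})$. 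Hence $\mathbf U=\fd^2_*$ and $G_{ij}$ is defined everywhere on $\fd P^1$. One should also note, for consistency of composition, that the image of $h_{ij}$ lies in the domain of $G_{ij}$, which is automatic since $G_{ij}$ is everywhere defined.

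I do not expect a serious obstacle here; the only point requiring a moment's care is the divisibility claim that $\ell_{ij}\mid p_ia_{ij}$, which underlies the existence of an \emph{integer} $k_{ij}$, and making sure the exponents $\ell_{ij}/p_i$ and $\ell_{ij}/p_j$ in the statement of $h_{ij}$ are genuinely natural numbers. Everything else is a routine substitution and an application of the mapping framework set up in Section \ref{sec1.2}.
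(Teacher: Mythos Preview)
Your proof is correct and follows the same approach as the paper: the key step in both is the observation that $p_ia_{ij}=p_jb_{ij}$ is a common multiple of $p_i$ and $p_j$ and hence divisible by $\ell_{ij}=\lcm(p_i,p_j)$, yielding the integer $k_{ij}$. The paper's proof is much terser, leaving implicit the verifications (that $h_{ij}$ is an axis projection, that the factorization holds on representatives, and that $G_{ij}$ is well-defined) which you have spelled out in full.
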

\begin{proof}
  $\ell_{ij}$ is the least common multiple of $p_i$ and $p_j$.  By
  elementary number theory (\cite{o} Ch.\ 3), any other common
  multiple is divisible by $\ell_{ij}$, so there exists $k_{ij}$ so
  that $p_ia_{ij}=p_jb_{ij}=k_{ij}\ell_{ij}$.
\end{proof}
\begin{notation}\label{not1.20}
  Let $I$ be the set of index pairs $\{(i,j):0\le i<j\le n\}$.  Let
  $D_{\mathbf p}\subseteq\fd P({\mathbf p})$ be the set of points
  where all the coordinates are non-zero: $\{z_0\ne0,\ z_1\ne0,\
  \ldots,$ and $z_n\ne0\}$.  Given axis projections $c_{ij}$ for
  $(i,j)\in I$, let $\displaystyle{\prod c_{ij}}$ denote the map
  \begin{eqnarray*}
    \fd P({\mathbf p})&\to&\fd P^1\times\fd P^1\times\cdots\times\fd P^1\\
    z&\mapsto&\left(c_{11}(z),c_{12}(z),\ldots,c_{ij}(z),\ldots,c_{n-1,n}(z)\right).
  \end{eqnarray*}
\end{notation}
The target space in the above Notation has one $\fd P^1$ factor for
each of the elements of $I$ ($\#I=n(n+1)/2$), so the output formula
lists an axis projection for every index pair $(i,j)$.  The map
$\displaystyle{\prod c_{ij}}$ is well-defined at every point in $D_{\mathbf p}$,
and possibly at some points not in $D_{\mathbf p}$.
\begin{defn}\label{def1.21}
  A weighted projective space $\fd P({\mathbf p})$ is
  {\underline{reconstructible}} means: there exist axis projections
  such that the restriction of the map $\displaystyle{\prod c_{ij}}$
  to the domain $D_{\mathbf p}$ is one-to-one.
\end{defn}
The idea is to try to use a list of unweighted ratios, $c_{ij}(z)$, as
a coordinatization of the weighted projective space $\fd P({\mathbf
p})$.  A reconstructible space is one where any point $z$ (with
non-zero coordinates) can be uniquely ``reconstructed'' from a list of
its values under some axis projections.  The use of the domain
$D_{\mathbf p}$ in the Definition omits consideration of points $z$
with a zero coordinate; as already seen in Examples \ref{ex1.13} and
\ref{ex1.17}, such points can exhibit exceptional behavior, and we are
interested in properties of generic points.
\begin{lem}\label{lem1.22}
  Given ${\mathbf p}$, the following are equivalent.
  \begin{enumerate}
    \item $\fd P({\mathbf p})$ is reconstructible;
    \item for the axis projections $h_{ij}$ from
    \mbox{\rm{(\ref{eq6})}}, the map $\displaystyle{\prod_{(i,j)\in
    I}h_{ij}}$ is one-to-one on $D_{\mathbf p}$;
    \item there exist a subset $J\subseteq I$ and axis projections
    $c_{ij}$ so that $\displaystyle{\prod_{(i,j)\in J} c_{ij}}$ is
    one-to-one on $D_{\mathbf p}$;
    \item there exists a subset $J\subseteq I$ so that $\displaystyle{\prod_{(i,j)\in J} h_{ij}}$ is
    one-to-one on $D_{\mathbf p}$.
  \end{enumerate}
\end{lem}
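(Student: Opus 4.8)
The plan is to establish the cycle of implications $(1)\Rightarrow(3)\Rightarrow(4)\Rightarrow(2)\Rightarrow(1)$. Two of these are immediate. For $(1)\Rightarrow(3)$, take $J=I$ and use the same axis projections. For $(2)\Rightarrow(1)$, the maps $h_{ij}$ of (\ref{eq6}) are themselves axis projections by Lemma \ref{lem1.19}, so statement (2) is literally a special case of statement (1) with $J=I$. Throughout, every map in sight is well-defined on $D_{\mathbf p}$ because every coordinate of a point of $D_{\mathbf p}$ is non-zero, so all the arguments are purely set-theoretic and no exceptional behavior at zero coordinates (as in Examples \ref{ex1.13}, \ref{ex1.17}) can interfere.

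For $(4)\Rightarrow(2)$: adjoining further coordinates to an injective map keeps it injective. Precisely, if $z,z'\in D_{\mathbf p}$ satisfy $h_{ij}(z)=h_{ij}(z')$ for all $(i,j)\in I$, then in particular for all $(i,j)\in J$, so the assumed injectivity of $\prod_{(i,j)\in J}h_{ij}$ on $D_{\mathbf p}$ forces $z=z'$; hence $\prod_{(i,j)\in I}h_{ij}$ is one-to-one on $D_{\mathbf p}$.

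For $(3)\Rightarrow(4)$: the key tool is the factorization $c_{ij}=G_{ij}\circ h_{ij}$ from Lemma \ref{lem1.19}, where each $G_{ij}:\fd P^1\to\fd P^1$ is well-defined on all of $\fd P^1$. Consequently, on $D_{\mathbf p}$ one has $\prod_{(i,j)\in J}c_{ij}=\left(\prod_{(i,j)\in J}G_{ij}\right)\circ\left(\prod_{(i,j)\in J}h_{ij}\right)$, where the left factor acts coordinatewise on the product of $\fd P^1$'s. Now suppose $z,z'\in D_{\mathbf p}$ with $h_{ij}(z)=h_{ij}(z')$ for every $(i,j)\in J$; applying the maps $G_{ij}$ gives $c_{ij}(z)=c_{ij}(z')$ for every $(i,j)\in J$, and the hypothesized injectivity of $\prod_{(i,j)\in J}c_{ij}$ on $D_{\mathbf p}$ then yields $z=z'$. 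Thus $\prod_{(i,j)\in J}h_{ij}$ is one-to-one on $D_{\mathbf p}$, which is (4).

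I do not expect a serious obstacle here; the proof is a short chain of elementary set-theoretic steps. The only thing to be careful about is confirming that each composition and each product map is genuinely well-defined on $D_{\mathbf p}$, which follows because axis projections are defined wherever the two relevant coordinates are not both zero and the maps $G_{ij}$ are defined everywhere. The conceptual content worth stressing is that Lemma \ref{lem1.19} lets one replace an \emph{arbitrary} family of axis projections by the canonical family $\{h_{ij}\}$ without destroying injectivity, and then enlarge the index set to all of $I$; together these reductions show that the a priori existential notion of reconstructibility can always be tested on the single explicit map $\prod_{(i,j)\in I}h_{ij}$.
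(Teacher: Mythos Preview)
Your proof is correct and uses essentially the same idea as the paper: the key nontrivial step is the factorization $c_{ij}=G_{ij}\circ h_{ij}$ from Lemma~\ref{lem1.19}, which shows that injectivity of a product of arbitrary axis projections forces injectivity of the corresponding product of the canonical $h_{ij}$. The only difference is organizational: the paper proves $3\Rightarrow2$ in one step (first padding $J$ out to $I$ with arbitrary projections, then factoring), while you split this into $3\Rightarrow4$ (factor within $J$) and $4\Rightarrow2$ (enlarge $J$ to $I$); both routes are equally short and your version avoids the need to choose dummy projections for indices outside $J$.
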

\begin{proof}
  The implications ${\mathit 2}\implies{\mathit 1}\implies{\mathit 3}$
  and ${\mathit 2}\implies{\mathit 4}\implies{\mathit 3}$ are
  logically trivial.  To show ${\mathit 3}\implies{\mathit 2}$, given
  $c_{ij}$ for $J\subseteq I$, pick any axis projections $c_{ij}$ for
  the remaining indices not in $J$; then
  $$\displaystyle{\prod_{(i,j)\in J} c_{ij}=F\circ\prod_{(i,j)\in I}
  c_{ij}},$$ where $F:(\fd P^1)^{\#I}\to(\fd P^1)^{\#J}$ forgets
  entries with non-$J$ indices.  Then, by Lemma \ref{lem1.19}, there
  exist factorizations $c_{ij}=G_{ij}\circ h_{ij}$, so
  $$\prod_{(i,j)\in J} c_{ij}=F\circ\left(\prod_{(i,j)\in
  I}G_{ij}\right)\circ\left(\prod_{(i,j)\in I} h_{ij}\right)$$ (where
  the product map $\displaystyle{\prod G_{ij}}:(\fd P^1)^{\#I}\to(\fd
  P^1)^{\#I}$ is defined in the obvious way for the composition to
  make sense).  If $\displaystyle{\prod h_{ij}}$ is not one-to-one on
  $D_{\mathbf p}$, then $\displaystyle{\prod_{(i,j)\in J} c_{ij}}$ is also not
  one-to-one on $D_{\mathbf p}$.
\end{proof}
\begin{example}\label{ex1.24b}
  For any field $\fd$, the space $\fd P(1,p_1,\ldots,p_n)$ is
  reconstructible.  Only $n$ axis projections are needed for a
  one-to-one product map: let $J=\{(0,j):j=1,\ldots,n\}$, and consider
  ${\mathbf h}_{0j}(z_0,z_1,\ldots,z_n)=(z_0^{p_j},z_j)$.  If
  $z=\pi_{\mathbf p}({\mathbf z})$, $z^\prime=\pi_{\mathbf p}({\mathbf
  z^\prime})\in D_{\mathbf p}$ satisfy ${\mathbf h}_{0j}({\mathbf
  z})\sim_{(1,1)}{\mathbf h}_{0j}({\mathbf z^\prime})$ for
  $j=1,\ldots,n$, then there exist $\lambda_{0j}\ne0$ such that
  $(z_0^\prime)^{p_j}=\lambda_{0j}z_0^{p_j}$ and
  $z_j^\prime=\lambda_{0j}z_j$.  Let $\mu=z_0^\prime/z_0$ (using the
  assumption that $z_0\ne0$); then $\mu z_0=z_0^\prime$ and
  $\mu^{p_j}z_j=(z_0^\prime/z_0)^{p_j}z_j=\lambda_{0j}z_j=z_j^\prime$,
  so ${\mathbf z}\sim_{\mathbf p}{\mathbf z^\prime}$.
\end{example}
\begin{example}\label{ex1.24}
  Example \ref{ex1.24b} shows $\re P(1,p_1)$ is reconstructible.  Even
  though the map $h_{01}([z_0:z_1]_{\mathbf p})=[z_0^{p_1}:z_1]$ is
  not globally one-to-one when $p_1$ is even, as shown in Example
  \ref{ex1.17}, it is one-to-one when restricted to $D_{\mathbf p}$.
\end{example}
\begin{example}\label{ex1.25}
    If one of the numbers $p_0$, $p_1$ is odd, then $\re P(p_0,p_1)$
    is reconstructible.  WLOG, let $p_0$ be odd.  For the axis
    projection $c_{01}([z_0:z_1]_{\mathbf p})=[z_0^{p_1}:z_1^{p_0}]$,
    the following diagram is commutative.  The label on the left arrow
    means that the indicated map is induced by the polynomial map
    $\re^2_*\to\re^2:(z_0,z_1)\mapsto(z_0,z_1^{p_0})$.
  $$\begin{diagram}
    \re P(p_0,p_1)\ar[rrr]^{c_{01}}\ar[d]_{(z_0,z_1^{p_0})}&&&\re P^1\\
    \re P(1,p_1)\ar[urrr]_{(z_0^{p_1},z_1)}&&&
  \end{diagram}$$
  The map on the left is globally one-to-one as in Example
  \ref{ex1.15}, and takes $D_{(p_0,p_1)}$ to $D_{(1,p_1)}$.  The lower
  right map is one-to-one on $D_{(1,p_1)}$: either by Example
  \ref{ex1.15} for odd $p_1$, or by Example \ref{ex1.24} for even
  $p_1$.
\end{example}
\begin{example}
  If both $p_0$ and $p_1$ are even, then $\re P(p_0,p_1)$ is not
  reconstructible.  Consider an axis projection induced by ${\mathbf
  c}_{01}(z_0,z_1)=(z_0^a,z_1^b)$.  By Lemma \ref{lem1.22}, we may
  assume that $a$ and $b$ are not both even.  If $a$ and $b$ are both
  odd, then ${\mathbf c}_{01}(1,1)=(1,1)\sim_{(1,1)}{\mathbf
  c}_{01}(-1,-1)=(-1,-1)$, but $(1,1)\not\sim_{\mathbf p}(-1,-1)$, so
  $c_{01}$ is not one-to-one.  If $a$ is even and $b$ is odd (the
  remaining case being similar), then ${\mathbf
  c}_{01}(1,-1)=(1,-1)\sim_{(1,1)}{\mathbf c}_{01}(-1,-1)=(1,-1)$, but
  $(1,-1)\not\sim_{\mathbf p}(-1,-1)$, so again $c_{01}$ is not
  one-to-one.
\end{example}
\begin{thm}\label{thm1.28}
  For $\fd=\co$ and any weight $\mathbf p$, $\co P({\mathbf p})$ is
  reconstructible.
\end{thm}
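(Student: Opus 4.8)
The plan is to argue by induction on~$n$, using Lemma~\ref{lem1.22} to reduce the statement to an algebraic normal‑form problem and then handling the inductive step by ``absorbing'' a root‑of‑unity ambiguity. By Lemma~\ref{lem1.22}, $\co P({\mathbf p})$ is reconstructible once the product map $\prod_{(i,j)\in I}h_{ij}$ is one‑to‑one on $D_{\mathbf p}$. So suppose $z,z'\in D_{\mathbf p}$ satisfy $h_{ij}(z)=h_{ij}(z')$ for all $(i,j)\in I$; choose representatives ${\mathbf z},{\mathbf z}'\in\co^{n+1}_*$, which necessarily have all coordinates nonzero, and put $r_k=z_k'/z_k\in\co^1_*$. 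Unwinding $h_{ij}(z)=h_{ij}(z')$ (using $\ell_{ij}/p_i=p_j/\gcd(p_i,p_j)$ and $\ell_{ij}/p_j=p_i/\gcd(p_i,p_j)$) gives
\begin{equation*}
  r_i^{\,p_j/\gcd(p_i,p_j)}=r_j^{\,p_i/\gcd(p_i,p_j)}\qquad\text{for all }0\le i<j\le n,
\end{equation*}
and what we must prove is that there exists $\mu\in\co^1_*$ with $\mu^{p_k}=r_k$ for every~$k$ --- this is exactly the assertion ${\mathbf z}\sim_{\mathbf p}{\mathbf z}'$, i.e.\ $z=z'$.

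The base case $n=0$ is trivial, since $\co P(p_0)$ is a single point. For the inductive step I would apply the inductive hypothesis to the shorter weight ${\mathbf q}=(p_1,\ldots,p_n)$: Lemma~\ref{lem1.22} says the corresponding product of axis projections is one‑to‑one on $D_{\mathbf q}$, and the truncations $(z_1,\ldots,z_n)$, $(z_1',\ldots,z_n')$ lie in $D_{\mathbf q}$ and satisfy the displayed relations with $1\le i<j\le n$; hence there is $\mu\in\co^1_*$ with $r_k=\mu^{p_k}$ for $k=1,\ldots,n$. Substituting these into the relations with $i=0$ yields, for each $j=1,\ldots,n$,
\begin{equation*}
  r_0^{\,p_j/\gcd(p_0,p_j)}=\bigl(\mu^{p_j}\bigr)^{p_0/\gcd(p_0,p_j)}=\bigl(\mu^{p_0}\bigr)^{\,p_j/\gcd(p_0,p_j)},
\end{equation*}
so the element $\varepsilon:=r_0\mu^{-p_0}\in\co^1_*$ satisfies $\varepsilon^{\,p_j/\gcd(p_0,p_j)}=1$ for all~$j$, and therefore $\varepsilon^{e}=1$, where $e:=\gcd\{\,p_j/\gcd(p_0,p_j):1\le j\le n\,\}$.

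To finish, I would correct $\mu$ by a root of unity. Writing $g=\gcd(p_1,\ldots,p_n)$, a short prime‑by‑prime computation shows $e=g/\gcd(p_0,g)$. Since $\co^1_*$ contains all $g$‑th roots of unity, and raising them to the $p_0$‑th power produces exactly the group of $e$‑th roots of unity, there is a $g$‑th root of unity $\omega$ with $\omega^{p_0}=\varepsilon$. Then $\mu':=\omega\mu$ has $(\mu')^{p_0}=\varepsilon\mu^{p_0}=r_0$ and, since $g\mid p_k$ for $k\ge1$, also $(\mu')^{p_k}=\mu^{p_k}=r_k$; thus $z_k'=(\mu')^{p_k}z_k$ for every~$k$, i.e.\ ${\mathbf z}\sim_{\mathbf p}{\mathbf z}'$, which closes the induction. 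I do not foresee a genuine obstacle here: the work is essentially bookkeeping --- deciding which coordinate to peel off, keeping track of which axis projections are consumed by the inductive hypothesis versus the new $i=0$ relations, and verifying the elementary identity $e=g/\gcd(p_0,g)$. The one point worth flagging is where algebraic closure is used: in extracting $\mu$ from the inductive hypothesis, and in finding the root of unity $\omega$ at the end. Both steps fail over $\re$, in keeping with the non‑reconstructibility of certain real weighted projective spaces discussed in this Section.

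A more conceptual but less self‑contained alternative would phrase everything in terms of the relation lattice $\Lambda=\{{\mathbf v}\in\zl^{n+1}:\sum_k v_kp_k=0\}$. One first checks --- by an induction that clears the last coordinate via B\'ezout --- that the syzygy vectors $(p_j/\gcd(p_i,p_j)){\mathbf e}_i-(p_i/\gcd(p_i,p_j)){\mathbf e}_j$ generate $\Lambda$; then the hypothesis above says precisely that $\prod_k r_k^{v_k}=1$ for every ${\mathbf v}\in\Lambda$, and divisibility of the multiplicative group $\co^1_*$ (equivalently, surjectivity of the $\gcd(p_0,\ldots,p_n)$‑th power map) produces the required~$\mu$.
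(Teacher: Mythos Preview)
Your argument is correct, but it is genuinely different from the paper's. The paper does not induct on $n$: it treats $n=1$ separately (via Example~\ref{ex1.14}), and for $n>1$ works directly. It fixes the $0$th coordinate first, taking all $p_0$ solutions $\mu_k$ of $\mu^{p_0}=z_0/z_0'$, and uses the counting Lemma~\ref{lem1.14} to show that for each $j\ge1$ some $\mu_{k_j}$ also satisfies $\mu_{k_j}^{p_j}z_j'=z_j$. The heart of the proof is then a Chinese Remainder Theorem argument: from the relations $h_{ij}(z)=h_{ij}(z')$ with $i,j\ge1$ one extracts congruences $k_i\ell_{ij}\equiv k_j\ell_{ij}\pmod{p_0}$, and a number-theoretic identity $\gcd(b_{0i},b_{0j})\ell_{ij}=\lcm(p_0,\gcd(p_i,p_j))$ shows these are exactly the compatibility conditions needed for the system $x\equiv k_j\pmod{b_{0j}}$ to have a simultaneous solution $x$, giving a single $\mu_x$ that works for all coordinates.

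Your inductive approach is cleaner in that it avoids CRT and the identity above, trading them for the (easier) identity $e=g/\gcd(p_0,g)$ and the surjectivity of the $p_0$th-power map from $g$th roots of unity onto $e$th roots of unity. The paper's approach, on the other hand, is more explicit about \emph{which} $\mu$ works and makes visible exactly which pairwise relations $h_{ij}$ are consumed where --- information that feeds into Theorem~\ref{thm1.29}. Your lattice-theoretic alternative is also sound and is the most conceptual of the three, but the paper's argument stays entirely within elementary number theory.
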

\begin{proof}
  Case 1: $n=1$.  Any space $\co P(q_0,q_1)$ is reconstructible (in
  fact, a stronger result holds: there is an axis projection which is
  one-to-one on the entire space).  Let $g_{01}=\gcd(q_0,q_1)$ and
  $\ell_{01}=\lcm(q_0,q_1)$, so $q_0=g_{01}p_0$, $q_1=g_{01}p_1$, and
  $\ell_{01}=g_{01}p_0p_1$, where $p_0$, $p_1$ are relatively prime.  The
  map $${\mathbf
  h}_{01}:\co^2_*\to\co^2:(z_0,z_1)\mapsto(z_0^{\ell_{01}/q_0},z_1^{\ell_{01}/q_1})=(z_0^{p_1},z_1^{p_0})$$
  induces an axis projection $h_{01}$ as in (\ref{eq6}), so that the
  following diagram is commutative.
  $$\begin{diagram}
    \co P(q_0,q_1)\ar[rrr]^{h_{01}}\ar[d]_{Id}&&&\co P^1\\
    \co P(p_0,p_1)\ar[rrr]_{(z_0^{p_1},z_1)}&&&\co P(p_0,1)\ar[u]_{(z_0,z_1^{p_0})}
  \end{diagram}$$
  The left arrow, labeled $Id$, represents the identity map as in
  Example \ref{ex1.10} with $m=g_{01}$.  The map indicated by the
  lower arrow is induced by the polynomial map
  $\co^2_*\to\co^2:(z_0,z_1)\mapsto(z_0^{p_1},z_1)$.  Both maps,
  indicated by the lower and right arrows, are (globally) one-to-one
  as in Example \ref{ex1.14}, so we can conclude $h_{01}$ is
  one-to-one on the entire domain $\co P(q_0,q_1)$.

  Case 2: $n>1$.  We use a product of axis projections as in statement
  $\mathit 2$.\ from Lemma \ref{lem1.22}.  For $(i,j)\in I$, recall
  the notation $\ell_{ij}=\lcm(p_i,p_j)$, and fix
  \begin{equation}\label{eq20}
   a_{ij}=\ell_{ij}/p_i, \ \ \  b_{ij}=\ell_{ij}/p_j,
  \end{equation} and
  $g_{ij}=\gcd(p_i,p_j)$.  Consider the product map
  \begin{equation}\label{eq13}
    \prod_{(i,j)\in I} h_{ij}:[z_0:\ldots:z_n]_{\mathbf
    p}\mapsto\prod_{(i,j)\in I}[z_i^{a_{ij}}:z_j^{b_{ij}}].
  \end{equation}
  To show this product map is one-to-one on $D_{\mathbf p}$, suppose
  we are given ${\mathbf z}$, ${\mathbf z^\prime}$ (with no zero
  components), and constants $\lambda_{ij}\in\co^1_*$ such that
  $\lambda_{ij}(z_i^\prime)^{a_{ij}}=z_i^{a_{ij}}$ and
  $\lambda_{ij}(z_j^\prime)^{b_{ij}}=z_j^{b_{ij}}$.  The algebra
  problem is then to find $\mu\in\co^1_*$ such that
  \begin{equation}\label{eq7}
    \mu^{p_j}z_j^\prime=z_j\mbox{\ \ \ for $j=0,\ldots,n$.}
  \end{equation}
  There are $p_0$ distinct elements $\{\mu_k,k=0,\ldots,p_0-1\}$
  satisfying $\mu_k^{p_0}=z_0/z_0^\prime$.  For each $k$ and for any
  $j=1,\ldots,n$,
  \begin{eqnarray}
    (\mu_k^{p_j}z_j^\prime)^{b_{0j}}&=&\mu_k^{p_jb_{0j}}(z_j^\prime)^{b_{0j}}=\mu_k^{p_0a_{0j}}(z_j^\prime)^{b_{0j}}\nonumber\\
    &=&\left(\frac{z_0}{z_0^\prime}\right)^{a_{0j}}(z_j^\prime)^{b_{0j}}=\lambda_{0j}(z_j^\prime)^{b_{0j}}=z_j^{b_{0j}}.\label{eq14}
  \end{eqnarray}
  By Lemma \ref{lem1.14},
  $$\#\{\mu_k^{p_j}z_j^\prime:k=0,\ldots,p_0-1\}=\#\{\mu_k^{p_j}\}=\frac{\lcm(p_0,p_j)}{p_j}=\frac{\ell_{0j}}{p_j}=b_{0j},$$
  which is equal to the number of roots in
  $\{\xi:\xi^{b_{0j}}=z_j^{b_{0j}}\}$, and so for each $j=1,\ldots,n$,
  there exists some index $k_j$ such that
  $\mu_{k_j}^{p_j}z_j^\prime=z_j$.  At this point we note that if all
  the $k_1,\ldots,k_n$ index values were the same, $\mu=\mu_{k_j}$
  would satisfy (\ref{eq7}) and we would be done.  One case where this
  happens in a trivial way is $p_0=1$; this was already observed in
  Example \ref{ex1.24b}.

  The rest of the Proof does not attempt to show the $k_j$ values are
  equal to each other; instead we use their existence to establish the
  existence of some other index $x$ such that $\mu_x$ is the required
  solution of (\ref{eq7}).

  For $i,j=1,\ldots,n$ with $i<j$, $\mu_{k_i}$ and $\mu_{k_j}$
  satisfy:
  \begin{eqnarray}
    (\mu_{k_i}^{p_i}z_i^\prime)^{a_{ij}}&=&\mu_{k_i}^{\ell_{ij}}(z_i^\prime)^{a_{ij}}=z_i^{a_{ij}}=\lambda_{ij}(z_i^\prime)^{a_{ij}},\nonumber\\
    (\mu_{k_j}^{p_j}z_j^\prime)^{b_{ij}}&=&\mu_{k_j}^{\ell_{ij}}(z_j^\prime)^{b_{ij}}=z_j^{b_{ij}}=\lambda_{ij}(z_j^\prime)^{b_{ij}}\nonumber\\
    \implies \lambda_{ij}&=&\mu_{k_i}^{\ell_{ij}}=\mu_{k_j}^{\ell_{ij}}.\label{eq8}
  \end{eqnarray}
  By re-labeling the roots if necessary, as in the Proof of Lemma
  \ref{lem1.14}, we may assume $\mu_k=r^{1/p_0}e^{i(\theta+2\pi
  k)/p_0}$ for $k=0,\ldots,p_0-1$.  Then (\ref{eq8}) implies the
  congruence 
  \begin{equation}\label{eq12}
    k_j\ell_{ij}\equiv k_i\ell_{ij}\mod p_0.
  \end{equation}
  We are looking for an index $x$ such that for every $j=1,\ldots,n$,
  $\mu_x^{p_j}=\mu_{k_j}^{p_j}$, so $x$ is an integer solution to the
  following system of linear congruences, where $p_j$ and $k_j$ are
  known:
  \begin{equation}\label{eq9}
    xp_j\equiv k_jp_j\mod p_0\mbox{\ \ \ for $j=1,\ldots,n$.}\\
  \end{equation}
  Dividing each congruence by $\gcd(p_0,p_j)$ does not change the
  solution set:
  \begin{eqnarray*}
    \frac{xp_j}{g_{0j}}&\equiv&\frac{k_jp_j}{g_{0j}}\mod\frac{p_0}{g_{0j}}\\
    \iff a_{0j}x&\equiv&a_{0j}k_j\mod b_{0j},
  \end{eqnarray*}
  which is equivalent, since $a_{0j}$ and $b_{0j}$ are relatively prime, to:
  \begin{equation}\label{eq10}
    x\equiv k_j\mod b_{0j}.
  \end{equation}
  By (an elementary generalization of) the Chinese Remainder Theorem
  (\cite{o}, Thm.\ 10--4), there exists an integer solution $x$ of the
  system (\ref{eq10}) if and only if for all pairs $1\le i<j\le n$,
  \begin{equation}\label{eq11}
    k_i\equiv k_j\mod\gcd(b_{0i},b_{0j}).
  \end{equation}
  Property (\ref{eq11}) follows from (\ref{eq12}): each congruence
  (\ref{eq11}) is equivalent to $$k_i\ell_{ij}\equiv
  k_j\ell_{ij}\mod\gcd(b_{0i},b_{0j})\ell_{ij}.$$ The following
  equalities are elementary (\cite{o} Chs.\ 3, 5); one step uses the
  property that $a_{ij}$ and $b_{ij}$ are relatively prime:
  \begin{eqnarray*}
    \gcd(b_{0i},b_{0j})\ell_{ij}&=&\gcd(b_{0i}\ell_{ij},b_{0j}\ell_{ij})=\gcd(b_{0i}p_ia_{ij},b_{0j}p_jb_{ij})\\
    &=&\gcd(\ell_{0i}a_{ij},\ell_{0j}b_{ij})=\gcd(\ell_{0i},\ell_{0j})\\
    &=&\gcd(\lcm(p_0,p_i),\lcm(p_0,p_j))=\lcm(p_0,\gcd(p_i,p_j))\\
    &=&\lcm(p_0,g_{ij}).
  \end{eqnarray*}
  By definition, $\ell_{ij}$ is a multiple of $g_{ij}$, and by
  (\ref{eq12}), $k_i\ell_{ij}-k_j\ell_{ij}$ is a multiple of $p_0$.  It
  follows that $k_i\ell_{ij}-k_j\ell_{ij}$ is a common multiple of
  $p_0$ and $g_{ij}$, and so a multiple of $\lcm(p_0,g_{ij})$, which
  implies (\ref{eq11}).
\end{proof}
\begin{thm}\label{thm1.29}
  For $\fd=\re$, $\re P({\mathbf p})$ is reconstructible if and only
  if $p_0,\ldots,p_n$ are not all even.
\end{thm}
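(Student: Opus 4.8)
The plan is to deduce both implications from Lemma \ref{lem1.22}, which reduces the question to injectivity of products of the canonical axis projections $h_{ij}$ of (\ref{eq6}) on $D_{\mathbf p}$. Write $e_k$ for the exponent of $2$ in $p_k$ (so $p_k=2^{e_k}m_k$ with $m_k$ odd), and recall from (\ref{eq6}) that $h_{ij}$ has the form $z\mapsto[z_i^{a_{ij}}:z_j^{b_{ij}}]$ with $a_{ij}=\ell_{ij}/p_i$ and $b_{ij}=\ell_{ij}/p_j$; since the exponent of $2$ in $\ell_{ij}=\lcm(p_i,p_j)$ is $\max(e_i,e_j)$, the integer $a_{ij}$ is odd precisely when $e_i\ge e_j$, and $b_{ij}$ is odd precisely when $e_j\ge e_i$. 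I will also use, without further comment, that reconstructibility is invariant under permuting the entries of $\mathbf p$, since a coordinate permutation is a bijection $\re P(\mathbf p)\to\re P(\mathbf p^\sigma)$ that carries $D_{\mathbf p}$ to $D_{\mathbf p^\sigma}$ and axis projections to axis projections.

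For the ``only if'' direction I argue the contrapositive: if every $p_k$ is even, then $\re P(\mathbf p)$ is not reconstructible. Set $m=\min_k e_k\ge 1$ and let $\mathbf s=(s_0,\ldots,s_n)\in\re^{n+1}_*$ have $s_k=-1$ when $e_k=m$ and $s_k=1$ otherwise. The points $\pi_{\mathbf p}(\mathbf s)$ and $\pi_{\mathbf p}(1,\ldots,1)$ lie in $D_{\mathbf p}$, and they are distinct because $s_k=\lambda^{p_k}$ has no solution $\lambda\in\re^1_*$ when $s_k=-1$ and $p_k$ is even. It then remains to verify $h_{ij}(\mathbf s)=h_{ij}(1,\ldots,1)=[1:1]$ for every $(i,j)\in I$, that is, $s_i^{a_{ij}}=s_j^{b_{ij}}$: when $e_i=e_j$, both $a_{ij}$ and $b_{ij}$ are odd and $s_i=s_j$; when $e_i<e_j$, the exponent $a_{ij}$ is even so $s_i^{a_{ij}}=1$, while $e_j>m$ forces $s_j=1$ and hence $s_j^{b_{ij}}=1$; the case $e_i>e_j$ is symmetric. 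Thus $\prod_{(i,j)\in I}h_{ij}$ is not one-to-one on $D_{\mathbf p}$, and Lemma \ref{lem1.22} yields the claim.

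For the ``if'' direction, assume some $p_k$ is odd; after a permutation $p_0$ is odd, so $e_0=0$ and consequently the exponent of $2$ in $b_{0j}=\ell_{0j}/p_j$ is $\max(0,e_j)-e_j=0$, that is, $b_{0j}$ is odd for each $j=1,\ldots,n$. I claim the product $\prod_{j=1}^{n}h_{0j}$, which is well-defined on $D_{\mathbf p}$, is one-to-one there; by Lemma \ref{lem1.22}(4) this proves $\re P(\mathbf p)$ reconstructible. Suppose $\mathbf z,\mathbf z'\in D_{\mathbf p}$ satisfy $h_{0j}(\mathbf z)\sim_{(1,1)}h_{0j}(\mathbf z')$ for every $j$. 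Since $p_0$ is odd, let $\mu\in\re^1_*$ be the unique real $p_0$-th root of $z_0/z_0'$, so $z_0=\mu^{p_0}z_0'$. Reading the scalar off each relation $[z_0^{a_{0j}}:z_j^{b_{0j}}]=[(z_0')^{a_{0j}}:(z_j')^{b_{0j}}]$ produces a $\lambda_j$ with $z_0^{a_{0j}}=\lambda_j(z_0')^{a_{0j}}$ and $z_j^{b_{0j}}=\lambda_j(z_j')^{b_{0j}}$, so $\lambda_j=(z_0/z_0')^{a_{0j}}=\mu^{p_0a_{0j}}=\mu^{p_jb_{0j}}$, using $p_0a_{0j}=p_jb_{0j}=\ell_{0j}$. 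Hence $z_j^{b_{0j}}=(\mu^{p_j}z_j')^{b_{0j}}$, and since $b_{0j}$ is odd, $z_j=\mu^{p_j}z_j'$. Together with the case $j=0$, the single $\mu$ shows $\mathbf z\sim_{\mathbf p}\mathbf z'$.

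The arithmetic (tracking the exponent of $2$ through $\lcm$ and through $p_0a_{0j}=p_jb_{0j}$) and the sign verifications are routine, so I do not expect a genuine obstacle. The one delicate point is the choice of the sign vector $\mathbf s$ in the first half: the naive guess $(-1,\ldots,-1)$ works only when all the $e_k$ are equal, and the correct move is to change sign exactly on the coordinates of minimal $2$-adic valuation; confirming that this makes every $h_{ij}(\mathbf s)$ equal to $[1:1]$ is the heart of the non-reconstructibility direction.
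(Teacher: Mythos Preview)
Your proof is correct. The ``if'' direction is essentially identical to the paper's: both assume $p_0$ odd, use only the $n$ axis projections $h_{0j}$, observe $b_{0j}$ is odd, choose $\mu$ as the real $p_0$-th root of $z_0/z_0'$, and conclude $z_j=\mu^{p_j}z_j'$ from $z_j^{b_{0j}}=(\mu^{p_j}z_j')^{b_{0j}}$.

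The ``only if'' direction differs genuinely in approach. The paper argues structurally: with $e_0$ minimal, it factors $\prod h_{ij}$ through the commutative diagram
\[
\re P(\mathbf p)\;=\;\re P(2q_0,\ldots,2^{e_n-e_0+1}q_n)\;\xrightarrow{\ 2:1\ }\;\re P(q_0,\ldots,2^{e_n-e_0}q_n)\;\xrightarrow{\ 1:1\ }\;\prod\re P^1,
\]
invoking Theorem~\ref{thm1.12} for the double cover and the already-proved ``if'' direction for the rightmost arrow; this yields the stronger conclusion that $\prod h_{ij}$ is \emph{exactly} two-to-one on $D_{\mathbf p}$. Your argument is more elementary and self-contained: you exhibit an explicit sign vector $\mathbf s$ (with $s_k=-1$ precisely on the coordinates of minimal $2$-adic valuation) that is $\sim_{\mathbf p}$-inequivalent to $(1,\ldots,1)$ yet has the same image under every $h_{ij}$. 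Your approach avoids the diagram machinery and the appeal to Theorem~\ref{thm1.12}, at the small cost of not recovering the precise two-to-one count. Either way the non-reconstructibility follows.
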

\begin{proof}
  To establish reconstructibility, assume, WLOG, $p_0$ is odd.  We can
  proceed with the same notation as Case 2 of the Proof of Theorem
  \ref{thm1.28}, and use a product of axis projections as in
  (\ref{eq13}), although as in Example \ref{ex1.24b}, only $n$ axis
  projections, indexed by $(i,j)=(0,j)$, are needed for a one-to-one
  product map.  Given real $\mathbf z$, ${\mathbf z^\prime}$, and
  $\lambda_{0j}$, the algebra problem is to find a real solution $\mu$
  of Equation (\ref{eq7}).  Since $p_0$ is odd and $z_0\ne0$, the
  equation $\mu^{p_0}z_0^\prime=z_0$ has a unique real solution for
  $\mu$.  For each $j$, $b_{0j}=p_0/g_{0j}$ is odd, and using the
  unique solution for $\mu$ in Equation (\ref{eq14}) gives
  $(\mu^{p_j}z_j^\prime)^{b_{0j}}=z_j^{b_{0j}}$, which implies
  $\mu^{p_j}z_j^\prime=z_j$, so Equation (\ref{eq7}) is satisfied.

  For the converse, suppose $p_j=2^{e_j}q_j$ with $e_j>0$ and $q_j$
  odd for $j=0,\ldots,n$.  To show statement $\mathit 2.$ from Lemma
  \ref{lem1.22} is false, we show that the product of axis projections
  as in (\ref{eq6}), (\ref{eq13}) is exactly two-to-one on $D_{\mathbf
  p}$; let this map be denoted by the top arrow in the diagram below.
  WLOG, assume $e_0$ is the smallest of the $e_j$ exponents.  By
  Example \ref{ex1.10}, dividing the weight $p$ by $m=2^{e_0-1}$ does
  not change the weighted projective space; this identity map is shown
  as the left arrow in the diagram.
  $$\begin{diagram} \re P({\mathbf
    p})\ar[r]\ar[d]_{Id}&\displaystyle{\prod_{(i,j)\in I}\re P^1}\\
    \re
    P(2q_0,2^{e_1-e_0+1}q_1,\ldots,2^{e_n-e_0+1}q_n)\ar[r]_(0.53){2:1}&\re
    P(q_0,2^{e_1-e_0}q_1,\ldots,2^{e_n-e_0}q_n)\ar[u]
  \end{diagram}$$
  The lower arrow is the map from Theorem \ref{thm1.12}; it is induced
  by the inclusion $\re^{n+1}_*\to\re^{n+1}$, and is two-to-one on the
  set $\{z:z_0\ne0\}$, which contains $D_{\mathbf p}$.  The upward
  arrow on the right is defined as in statement $\mathit 2.$ from
  Lemma \ref{lem1.22}; this was shown to be one-to-one on
  $D_{(q_0,\ldots,2^{e_n-e_0}q_n)}$ in the first part of this Proof.
  The diagram is commutative (the top arrow is the composite of the
  other arrows) because the axis projections use the same exponents.
  For the top arrow,
  \begin{eqnarray*}
    a_{ij}&=&\frac{\lcm(p_i,p_j)}{p_i}=\frac{\lcm(2^{e_i}q_i,2^{e_j}q_j)}{2^{e_i}q_i}\\
    &=&\frac{\lcm(2^{e_i-e_0}q_i,2^{e_j-e_0}q_j)2^{e_0}}{2^{e_i}q_i}.
  \end{eqnarray*}
  For the right arrow, the corresponding exponent is
  $$\frac{\lcm(2^{e_i-e_0}q_i,2^{e_j-e_0}q_j)}{2^{e_i-e_0}q_i},$$
  which is the same, and similarly for the exponents $b_{ij}$.
\end{proof}

\section{Generalizing Eves' Theorem}\label{sec3}

\subsection{Configurations in projective space}\label{sec4.1}

We begin with some combinatorial notation that is needed to keep track
of various indexings.

\begin{notation}\label{not4.0}
  Two ordered $N$-tuples $$(x_1,\ldots,x_N),\ (y_1,\ldots,y_N)$$ are
  {\underline{equivalent up to re-ordering}} if there exists a
  permutation $\sigma$ of the index set $\{1,\ldots,N\}$ such that
  $y_i=x_{\sigma(i)}$ for $i=1,\ldots,N$.  This is an equivalence
  relation; we denote the equivalence class of $(x_1,\ldots,x_N)$ with
  square brackets, $[x_1,\ldots,x_N]$, and call it an
  {\underline{unordered list}}.
\end{notation}
When it is necessary to index the entries in an unordered list, it is
convenient to first pick an ordered representative.
Using the following notation, we describe some configurations of
points in (non-weighted) projective space.
\begin{notation}\label{not4.2}
  Given $D,r\in\na$, and points
  $\alpha_1,\ldots,\alpha_e,\ldots,\alpha_r\in\fd P^D$, denote an
  ordered $r$-tuple of points $$\vec
  s=(\alpha_1,\ldots,\alpha_e,\ldots,\alpha_r).$$ Such an ordered
  $r$-tuple is an {\underline{independent}} $r$-tuple means: there
  exist representatives for the points,
  $\ba_1,\ldots,\ba_e,\ldots,\ba_r$, which form an independent set of
  $r$ vectors in $\fd^{D+1}$ (so $r\le D+1$).  In the case $r=2$, we
  call the ordered, independent pair $\vec s=(\alpha,\beta)$ a
  {\underline{directed segment}}, and the two points its
  {\underline{endpoints}}.  In the case $r=3$, ordered, independent
  triples are {\underline{triangles}} $\vec
  s=\triangle(\alpha\beta\gamma)$.
\end{notation}
\begin{defn}\label{def4.4}
  Given an independent $r$-tuple $\vec s$, there is a unique
  $r$-dimensional subspace ${\mathbf L}$ of $\fd^{D+1}$ that is
  spanned by any independent set of representatives for the points in
  $\vec s$.  The image $\pi({\mathbf L}\setminus\vec0)=L$ is a
  $(r-1)$-dimensional projective subspace of $\fd P^D$, which we call
  the {\underline{span}} of $\vec s$.
\end{defn}
It is convenient to also refer to the $\fd$-linear subspace $\mathbf
L$ as $\pi^{-1}(L)$, and to $L=\pi({\mathbf L})$, even though $\pi$ is
not defined at $\vec0$.
\begin{defn}\label{def4.3}
  Given a weight ${\mathbf p}=(p_0,\ldots,p_n)$ as in Section
  \ref{sec1} and some other numbers $D,\ell,r\in\na$ with $r\le D+1$,
  a {\underline{$({\mathbf p},r,\ell,D)$-configuration}} (or, just
  ``configuration'' when the ${\mathbf p}$, $r$, $\ell$, and $D$ are
  understood) is an ordered $(n+1)$-tuple $\SSS$,
  \begin{equation}\label{eq21}
    \SSS=(\SSS_0,\ldots,\SSS_c,\ldots,\SSS_n),
  \end{equation}
  where each $\SSS_c$ is an unordered list (possibly with repeats) of
  $\ell\cdot p_c$ ordered, independent $r$-tuples of points in $\fd
  P^D$:
  $$\SSS_{c}=[\vec s_c^1,\ldots,\vec s_c^{\ \ell p_c}].$$
\end{defn}
We remark that it is possible for some $\SSS$ to be both a $({\mathbf
p},r,\ell,D)$-configuration and a $({\mathbf
p^\prime},r,\ell^\prime,D)$-configuration with ${\mathbf p}\ne{\mathbf
p^\prime}$ and $\ell\ne\ell^\prime$, although if ${\mathbf p}$ is
given, then $\ell$ is determined by the length of the lists $\SSS_c$.

As an aid to visualization and drawing, the indices $c=0,\ldots,n$ can
correspond to colors: $c=0=$ black, $c=1=$ red, $c=2=$ green, etc.
So, for $r=2$, $\SSS_0$ is a list of $\ell p_0$ black segments,
$\SSS_1$ is a list of $\ell p_1$ red segments, etc. 

\begin{notation}\label{not4.6}
  Given a $({\mathbf p},r,\ell,D)$-configuration $\SSS$, define the
  following sets:
  \begin{itemize}
    \item $\pp(\SSS)$ is the set of points $z\in\fd P^D$
  such that $z$ is one of the $r$ components of some $\vec s_c^K$ in
  $\SSS_c$, for some $c=0,\ldots,n$;
    \item $\LL(\SSS)$ is the set of $(r-1)$-dimensional projective subspaces
  which are the spans of the $r$-tuples $\vec s_c^K$;
    \item $\uuu(\SSS)$ is the following union of $r$-dimensional
    subspaces in $\fd^{D+1}$:
    $$\displaystyle{\uuu(\SSS)=\bigcup_{L\in\LL(\SSS)}\pi^{-1}(L)}.$$
  \end{itemize}
  All three sets depend only on the set of $r$-tuples in $\SSS$, not
  on the ordering in (\ref{eq21}), nor on $\mathbf p$ and $\ell$.
\end{notation}  
For example, when $r=2$, any directed segment lies on a unique
projective line, so $\LL(\SSS)$ is a (finite) set of projective lines
in $\fd P^D$.  Since the same point may appear in several different
$r$-tuples, it is possible for the size of $\pp(\SSS)$ to be small
compared to the number of $r$-tuples.
\begin{defn}\label{def4.1}
  Given a $({\mathbf p},r,\ell,D)$-configuration $\SSS$, for
  $c=0,\ldots,n$, choose an ordered representative 
  \begin{equation}\label{eq37}
    S_c=(\vec s_c^1,\ldots,\vec s_c^K,\ldots,\vec s_c^{\ \ell p_c})
  \end{equation} of the
  equivalence class $\SSS_c$.  Define the {\underline{$c$-degree}} of
  a point $z\in\fd P^D$ to be $$deg_c(z)=\#\{K:z\mbox{ is one of the
  $r$ points of the $r$-tuple }\vec s_c^K\mbox{ in }S_c\}.$$
\end{defn}
According to the color scheme indexed by $c$, every point in the
configuration has a black degree, a red degree, etc.  Definition
\ref{def4.1} is stated in a way so that possibly repeated $r$-tuples
are counted with multiplicity.  The assumption that each $r$-tuple is
independent implies that $z$ appears at most once in an $r$-tuple.
The number $deg_c(z)$ does not depend on the choice of ordered
representative $S_c$ for $\SSS_c$, nor on ${\mathbf p}$ and $\ell$ if
$\SSS$ admits another description as a $({\mathbf
p^\prime},r,\ell^\prime,D)$-configuration.  For all but finitely many
points in $\fd P^D$, the $c$-degree is zero.

The following Definition is dual to Definition \ref{def4.1}.
\begin{defn}\label{def4.6}
  For $\SSS$ and $S_c$ as in Definition \ref{def4.1}, define the
  {\underline{$c$-degree}} of a projective $(r-1)$-subspace $L$ of
  $\fd P^D$ to be
  $$deg_c(L)=\#\{K:\mbox{all $r$ points of the $r$-tuple }\vec
  s_c^K\mbox{ in }S_c\mbox{ lie on }L\}.$$
\end{defn}

The following Definition of a morphism of a configuration was
motivated by, but is different from, a notion of isomorphic plane
configurations considered by \cite{shephard}.
\begin{defn}\label{def4.7}
  Given a $({\mathbf p},r,\ell,D)$-configuration
  $\SSS=(\SSS_0,\ldots,\SSS_n)$, and a $({\mathbf
  p},r,\ell,D^\prime)$-configuration $\ttt=(\ttt_0,\ldots,\ttt_n)$,
  $\aaa$ is a {\underline{morphism}} from $\SSS$ to $\ttt$ means
  $\aaa$ is a function $\pp(\SSS)\to\pp(\ttt)$ such that:
  \begin{enumerate}
    \item For indexing purposes, for any ordered
    representative for each $\SSS_c$, $c=0,\ldots,n$, 
    \begin{equation}\label{eq34}
      (\vec s_c^1,\ldots,\vec s_c^K,\ldots,\vec s_c^{\ \ell p_c}),
    \end{equation}
    there is an ordered representative for $\ttt_c$,
    \begin{equation}\label{eq35}
      (\vec t_c^{\ 1},\ldots,\vec t_c^{\ K},\ldots,\vec t_c^{\ \ell
    p_c}),
    \end{equation}
    and;
    \item There exists a function ${\mathbf
    A}:\uuu(\SSS)\to\fd^{D^\prime+1}$ such that the restriction of
    $\mathbf A$ to each of the subspaces ${\mathbf L}=\pi^{-1}(L)$ for
    $L\in\LL(\SSS)$ is one-to-one and $\fd$-linear, and induces a map
    $A_L:L\to\fd P^{D^\prime}$ which satisfies, for every $\vec s_c^K$
    that spans $L$:
    \begin{eqnarray}
      A_L(\vec
        s_c^K)&=&A_L\left(\left(s_c^{K,1},\ldots,s_c^{K,e},\ldots,s_c^{K,r}\right)\right)\nonumber\\
        &=&\left(A_L(s_c^{K,1}),\ldots,A_L(s_c^{K,e}),\ldots,A_L(s_c^{K,r})\right)\nonumber\\
        &=&\left(\aaa(s_c^{K,1}),\ldots,\aaa(s_c^{K,e}),\ldots,\aaa(s_c^{K,r})\right)\label{eq36}\\
        &=&\left(t_c^{K,1},\ldots,t_c^{K,e},\ldots,t_c^{K,r}\right)=\vec
        t_c^{\ K}.\nonumber
    \end{eqnarray}
  \end{enumerate}
\end{defn}
As a consequence of the Definition, a morphism defines a one-to-one
correspondence between the lists (\ref{eq34}) and (\ref{eq35}) of
$r$-tuples of color $c$, $c=0,\ldots,n$.  A morphism $\aaa$ is
necessarily an onto map on the sets of points,
$\pp(\SSS)\to\pp(\ttt)$, but is not necessarily one-to-one, and the
number $\#\LL(\ttt)$ may also be less than $\#\LL(\SSS)$.  Our notion
of morphism is a little stronger than just an incidence-preserving
collection of projective linear mappings $A_L$ of the projective
subspaces in $\LL(\SSS)$; the maps must all be induced by the same
$\mathbf A$.

\begin{center}
  \begin{figure}
\begin{center}
    \includegraphics[scale=0.5]{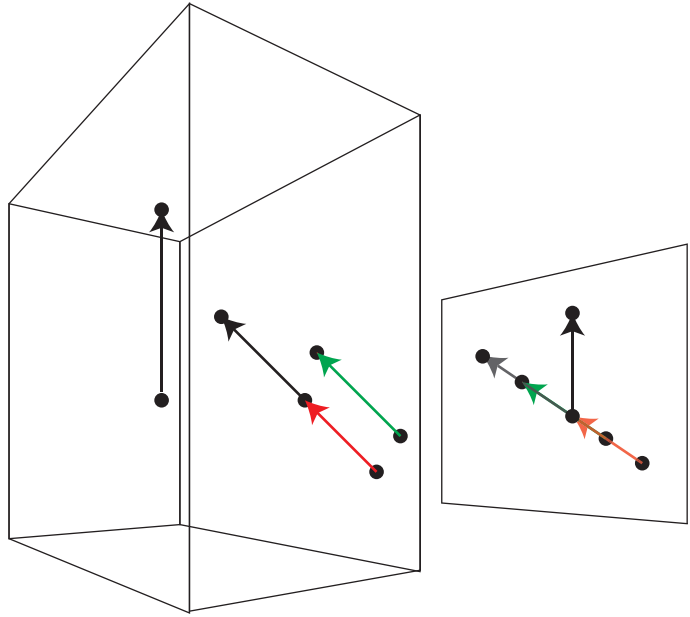}
    \caption{The projection from left to right defines a morphism from
      a configuration of $7$ points, $3$ lines, and $4$ segments in
      three dimensions to a configuration of $6$ points, $2$ lines,
      and $4$ segments in two dimensions.}\label{fig2}
  \end{center}
  \end{figure}
\end{center}

\begin{prop}\label{prop4.9}
  Given $\fd$, $\mathbf p$, $r$, $\ell$, the $D=r-1,\ldots,\infty$
  union of the sets of $({\mathbf p},r,\ell,D)$-configurations,
  together with the above notion of morphism, forms a category.
\end{prop}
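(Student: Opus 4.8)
The plan is to verify the category axioms directly from Definition \ref{def4.7}, the only subtle points being the specification of identity morphisms and the well-definedness of composition.

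First I would identify the objects: for fixed $\fd$, $\mathbf p$, $r$, $\ell$, the objects are all $({\mathbf p},r,\ell,D)$-configurations $\SSS$ as $D$ ranges over $r-1,\ldots,\infty$. For the identity morphism on $\SSS$, take $\aaa=\mathrm{id}_{\pp(\SSS)}$, with the ordered representatives in (\ref{eq35}) chosen equal to those in (\ref{eq34}), and with ${\mathbf A}:\uuu(\SSS)\to\fd^{D+1}$ the inclusion; its restriction to each ${\mathbf L}=\pi^{-1}(L)$ is $\fd$-linear and one-to-one, inducing $A_L=\mathrm{id}_L$, and then (\ref{eq36}) holds trivially. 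One should note that even though a morphism need not be injective on points, the identity map of a fixed object certainly is, so there is no obstruction here.

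Next I would treat composition. Given morphisms $\aaa:\SSS\to\ttt$ and $\bbb:\ttt\to\uuu$ (configurations in dimensions $D$, $D^\prime$, $D^{\prime\prime}$), set the composite on points to be $\bbb\circ\aaa:\pp(\SSS)\to\pp(\uuu)$. For the indexing condition, start with any ordered representative (\ref{eq34}) of $\SSS_c$; apply the definition of $\aaa$ to get a compatible ordered representative $(\vec t_c^{\ K})_K$ of $\ttt_c$; then apply the definition of $\bbb$ to \emph{that} ordered representative of $\ttt_c$ to get an ordered representative $(\vec u_c^{\ K})_K$ of $\uuu_c$ with matching index $K$; the composite then carries index $K$ to index $K$ as required. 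For the linear-map condition, I would proceed span-by-span: for $L\in\LL(\SSS)$, the morphism $\aaa$ sends the $r$-tuples spanning $L$ to $r$-tuples all of whose points lie on $M:=A_L(L)\in\LL(\ttt)$, and since $A_L$ is one-to-one and $\fd$-linear on ${\mathbf L}$, $M$ has dimension $r-1$; then $B_M$ from $\bbb$ restricted to $\pi^{-1}(M)$ is one-to-one and $\fd$-linear, so the composite $B_M\circ A_L$ is one-to-one and $\fd$-linear on ${\mathbf L}$. Chasing (\ref{eq36}) for $\aaa$ and then for $\bbb$ shows $(B_M\circ A_L)(\vec s_c^K)=\vec u_c^{\ K}$. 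The one global map ${\mathbf A}_{\text{comp}}:\uuu(\SSS)\to\fd^{D^{\prime\prime}+1}$ is obtained by gluing: on $\pi^{-1}(L)$ define it as ${\mathbf B}\circ({\mathbf A}|_{\pi^{-1}(L)})$; I would check this is consistent on overlaps $\pi^{-1}(L_1)\cap\pi^{-1}(L_2)$ because ${\mathbf A}$ is a genuine function on $\uuu(\SSS)$ and ${\mathbf B}$ a genuine function on $\uuu(\ttt)\supseteq{\mathbf A}(\uuu(\SSS))$, so the composite is unambiguous pointwise. That the composite lands inside $\uuu(\uuu)$ follows since ${\mathbf A}(\uuu(\SSS))\subseteq\uuu(\ttt)$ and ${\mathbf B}(\uuu(\ttt))\subseteq\uuu(\uuu)$.

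Finally, associativity of composition and the unit laws are immediate once the above is set up: on points, composition is ordinary function composition, which is associative and has $\mathrm{id}$ as a two-sided unit; the index-matching bookkeeping and the global linear maps compose strictly associatively for the same reason. The step I expect to be the main obstacle is checking that composition of the ``witnessing'' global maps ${\mathbf A}$, ${\mathbf B}$ is genuinely well-defined as a single function on $\uuu(\SSS)$ — i.e., that the restrictions to the various $\pi^{-1}(L)$ glue — and, relatedly, confirming that the image of an $(r-1)$-dimensional span under $A_L$ is again an $(r-1)$-dimensional span in $\LL(\ttt)$ so that $B_M$ is available; both reduce to the one-to-one, $\fd$-linear hypothesis on each $A_L$, $B_M$, but they deserve an explicit sentence rather than being swept under ``clearly.''
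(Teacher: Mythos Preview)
Your proposal is correct and follows the same approach as the paper: the paper's proof is only a sketch (``There is an identity morphism from any $\SSS$ to itself. It is straightforward to check that the usual composition of maps of sets $\aaa:\pp(\SSS)\to\pp(\ttt)$ and ${\mathbf A}:\uuu(\SSS)\to\uuu(\ttt)$ defines an associative composition of morphisms''), and you have simply filled in the details that the sketch defers. One small simplification: since ${\mathbf A}(\uuu(\SSS))\subseteq\uuu(\ttt)$, the composite witness is just the single function ${\mathbf B}\circ{\mathbf A}$, so no gluing argument is needed; also, beware that your choice of symbols $\bbb$ and $\uuu$ for the second morphism and third configuration collides with the paper's macros for ordered bases and the union of subspaces.
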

\begin{proof}[Proof (sketch)]
  There is an identity morphism from any $\SSS$ to itself.  It is
  straightforward to check that the usual composition of maps of sets
  $\aaa:\pp(\SSS)\to\pp(\ttt)$ and ${\mathbf
  A}:\uuu(\SSS)\to\uuu(\ttt)$ defines an associative composition of
  morphisms.
\end{proof}
\begin{example}\label{ex4.8}
  The classical notion of projective equivalence is an important
  special case of morphism, as follows.  Let $D^\prime=D$, and let
  ${\mathbf A}$ be an invertible $\fd$-linear map
  $\fd^{D+1}\to\fd^{D+1}$.  The induced map $A:\fd P^D\to\fd P^D$ is a
  {\underline{projective transformation}} and a configuration $\SSS$
  is {\underline{projectively equivalent}} to its image $A(\SSS)$.
  The restriction of $A$ to $\pp(\SSS)$ is a morphism $\aaa$ from
  $\SSS$ to $A(\SSS)$ as in Definition \ref{def4.7}.  First, for any
  ordered representative of $\SSS_c$, $c=0,\ldots,n$, index the
  $r$-tuples in $A(\SSS_c)=\ttt_c$ by setting $\vec t_c^{\ K}=A(\vec
  s_c^K)$.  The map ${\mathbf A}:\uuu(\SSS)\to\fd^{D+1}$ from the
  Definition is just the restriction of the given linear map to
  $\uuu(\SSS)$, and restricts further to ${\mathbf L}=\pi^{-1}(L)$ for
  $L\in\LL(\SSS)$, so $\left.{\mathbf A}\right|_{\mathbf L}$ is
  one-to-one and linear, satisfying (\ref{eq1}) and (\ref{eq2}), so it
  induces $A_L:L\to\fd P^D$.  For each independent $r$-tuple $\vec
  s_c^{K}$ with span $L$, the induced map $A_L$ takes $\vec s_c^{K}$
  to an independent $r$-tuple $A_L(\vec s_c^K)=\vec t_c^{\ K}$.
\end{example}
\begin{example}\label{ex4.11}
  In Example \ref{ex4.8}, checking Definition \ref{def4.7} did not
  require that $D^\prime=D$, nor that $\mathbf A$ was invertible.  The
  same argument applies to any $\fd$-linear $\mathbf
  A:\fd^{D+1}\to\fd^{D^\prime+1}$, which is not necessarily one-to-one
  or onto, but which is one-to-one when restricted to subspaces
  ${\mathbf L}=\pi^{-1}(L)$ for $L\in\LL(\SSS)$.  As shown in Figure
  \ref{fig2}, the induced map $A$ could be a projection from a subset
  of a higher-dimensional projective space to a lower-dimensional
  space, and would define a morphism $\aaa$ from a configuration
  $\SSS$ to $A(\SSS)$ as long as the image of every
  $(r-1)$-dimensional projective subspace in $\LL(\SSS)$ is still
  $(r-1)$-dimensional.
\end{example}
\begin{defn}\label{def4.2}
  A $({\mathbf p},r,\ell,D)$-configuration $\SSS$ is a
  {\underline{weight $\mathbf p$ h-configuration}} means:
  \begin{enumerate}
    \item At every point $z\in\fd P^D$, these numbers are integers and
    are equal to each other:
      \begin{equation}\label{eq16}
        \frac{deg_0(z)}{p_0}=\cdots=\frac{deg_c(z)}{p_c}=\cdots=\frac{deg_n(z)}{p_n};
      \end{equation}    
    \item For every projective $(r-1)$-subspace $L\subseteq\fd P^D$,
    these numbers are integers and are equal to each other:
      \begin{equation}\label{eq39}
        \frac{deg_0(L)}{p_0}=\cdots=\frac{deg_c(L)}{p_c}=\cdots=\frac{deg_n(L)}{p_n}.
      \end{equation}
  \end{enumerate}
\end{defn}
For a weight $\mathbf p$ h-configuration $\SSS$, we have the following
geometric interpretation of the parameter $\ell$: if a
$(r-1)$-dimensional projective subspace $L$ in $\LL(\SSS)$ has
$deg_c(L)=m_Lp_c$, then by (\ref{eq39}), $m_L$ does not depend on $c$.
There is an unordered $\ell$-tuple of subspaces,
$[L_1,\ldots,L_k,\ldots,L_\ell]$, where each $L_k$ is incident with
exactly $p_c$ $r$-tuples with color $c$, and $L_k$ occurs in the
unordered list with multiplicity $m_{L_k}$.

\begin{lem}\label{lem4.13}
  If $\SSS$ is a weight $\mathbf p$ h-configuration and
  $\aaa:\SSS\to\ttt$ is a morphism, then $\ttt$ is a weight $\mathbf
  p$ h-configuration.
\end{lem}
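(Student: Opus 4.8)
The plan is to verify the two conditions of Definition~\ref{def4.2} for $\ttt$ directly, by expressing each degree computed in $\ttt$ as a sum of degrees computed in $\SSS$.

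First I would fix ordered representatives $(\vec s_c^1,\ldots,\vec s_c^{\ \ell p_c})$ for each $\SSS_c$ and the matching ordered representatives $(\vec t_c^{\ 1},\ldots,\vec t_c^{\ \ell p_c})$ for $\ttt_c$ supplied by the morphism, so that by (\ref{eq36}) we have $t_c^{K,e}=\aaa(s_c^{K,e})$, and, writing $L$ for the span of $\vec s_c^K$, $\vec t_c^{\ K}=A_L(\vec s_c^K)$. Since $\mathbf A$ restricted to $\pi^{-1}(L)$ is one-to-one and linear, $A_L$ is a projective isomorphism of $L$ onto the $(r-1)$-dimensional subspace $A_L(L)$, which is then the span of the independent $r$-tuple $\vec t_c^{\ K}$. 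The one auxiliary fact I would record at the outset is: \emph{if $z$ and $z^\prime$ are both components of a single $r$-tuple $\vec s_c^K$, then $\aaa(z)=\aaa(z^\prime)$ forces $z=z^\prime$}; indeed $z$ and $z^\prime$ lie on the span $L$ of $\vec s_c^K$, on which $A_L$ is one-to-one and agrees with $\aaa$ by (\ref{eq36}).

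For the first condition of Definition~\ref{def4.2}, fix $w\in\fd P^{D^\prime}$ and let $z^{(1)},\ldots,z^{(m)}$ be the finitely many points of $\pp(\SSS)$ with $\aaa(z^{(i)})=w$ (there are none precisely when $w\notin\pp(\ttt)$, in which case both sides below vanish). The point $w$ is a component of $\vec t_c^{\ K}$ exactly when some component of $\vec s_c^K$ is one of the $z^{(i)}$; moreover no $\vec s_c^K$ can contain two distinct $z^{(i)}$ as components, since their common image $w$ would violate the auxiliary fact. Hence the index sets $\{K:z^{(i)}\text{ is a component of }\vec s_c^K\}$ are pairwise disjoint, and the $\ttt$-degree $deg_c(w)$ equals $\sum_{i=1}^m deg_c(z^{(i)})$ with the degrees on the right taken in $\SSS$. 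Since $\SSS$ is a weight $\mathbf p$ h-configuration, $deg_c(z^{(i)})=p_c k_i$ for a non-negative integer $k_i$ independent of $c$, so the $\ttt$-degree of $w$ is $p_c\sum_i k_i$, which establishes the first condition for $\ttt$. The second condition is the dual computation: for an $(r-1)$-subspace $M\subseteq\fd P^{D^\prime}$, all $r$ points of $\vec t_c^{\ K}$ lie on $M$ if and only if $A_L(L)=M$, where $L$ is the span of $\vec s_c^K$ (both subspaces being $(r-1)$-dimensional, so containment forces equality); grouping the indices $K$ by this span $L\in\LL(\SSS)$ gives that the $\ttt$-degree $deg_c(M)$ equals $\sum_{L\in\LL(\SSS),\ A_L(L)=M}deg_c(L)$, and writing each $\SSS$-degree $deg_c(L)=p_c m_L$ with $m_L$ a non-negative integer independent of $c$ gives the second condition for $\ttt$ as well.

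The only real subtlety --- and the point around which the proof is organized --- is that a morphism need not be injective, so distinct points of $\SSS$ (respectively, distinct spanned $(r-1)$-subspaces) may be identified in $\ttt$, and the $\ttt$-degrees are then honest sums of $\SSS$-degrees rather than copies of them. What makes this harmless is exactly that $A_L$ is one-to-one on each $L\in\LL(\SSS)$: no collapsing occurs among the components of a single $r$-tuple, nor among the $r$-tuples sharing a single span, so the degree identities above hold with no correction terms, and the property of being divisible by $p_c$ with a $c$-independent quotient passes to finite sums.
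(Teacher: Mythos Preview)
Your proof is correct and follows essentially the same approach as the paper's own proof: fix matching ordered representatives, observe that no single $r$-tuple $\vec s_c^K$ can contain two distinct preimages of a point $w$ (your ``auxiliary fact,'' which the paper phrases as ``there is no $r$-tuple $\vec s_c^K$ that contains more than one point of $\aaa^{-1}(z)$''), deduce that $deg_c(w)$ in $\ttt$ is the sum of the $\SSS$-degrees over the fiber $\aaa^{-1}(w)$, and then run the dual argument for $(r-1)$-subspaces by grouping over $\{L\in\LL(\SSS):A_L(L)=M\}$. The organization and key observations match; your write-up is slightly more explicit about why the auxiliary fact holds and why containment of $(r-1)$-subspaces forces equality, but there is no genuine difference in method.
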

\begin{proof}
  Let the ordered $\ell p_c$-tuple $S_c$ be an ordered representative
  for $\SSS_c$; then let $T_c$ be the corresponding ordered
  representative of $\ttt_c$ as in (\ref{eq35}).  The $r$ points in
  $\vec t_c^{\ K}$ are indexed, using (\ref{eq36}),
  \begin{equation}\label{eq38}
    t_c^{K,e}=\aaa(s_c^{K,e}),
  \end{equation}
  for $K=1,\ldots,\ell p_c$ and $e=1,\ldots,r$.  

  To check part 1.\ of Definition \ref{def4.2}, suppose $z\in\fd
  P^{D^\prime}$.  If $z\notin\pp(\ttt)$, then $deg_c(z)=0$ for all
  $c$.  If $z\in\pp(\ttt)$, then $\aaa^{-1}(z)$ is a finite set of
  points in $\pp(\SSS)$.  There is no $r$-tuple $\vec s_c^K$ that
  contains more than one point of $\aaa^{-1}(z)$, since $\aaa(\vec
  s_c^K)$ is the independent $r$-tuple $\vec t_c^{\ K}$.  An $r$-tuple
  $\vec t_c^{\ K}$ has $z$ as one of its $r$ points if and only if the
  corresponding $r$-tuple $\vec s_c^K$ has some element of
  $\aaa^{-1}(z)$ as one of its $r$ points.  For each $c$, the
  cardinality of the disjoint union of indices $K$ is:
  $$deg_c(z)=\sum_{w\in\aaa^{-1}(z)}deg_c(w).$$ The equalities in
  (\ref{eq16}) for $z$ follow from the assumed equalities for all the
  points $w$.

  Dually, projective $(r-1)$-subspaces not in $\LL(\ttt)$ have
  $deg_c=0$ for all $c$.  By (\ref{eq36}), every projective
  $(r-1)$-subspace in $\LL(\ttt)$ is of the form
  $A_{L^\prime}(L^\prime)$, and if $L^\prime$ is the span of $\vec
  s_c^{K^\prime}$, then all $r$ points $t_c^{K^\prime,e}$ lie on
  $A_{L^\prime}(L^\prime)$.  The set
  $\LL^\prime=\{L\in\LL(\SSS):A_L(L)=A_{L^\prime}(L^\prime)\}$ is
  finite, and there is no $r$-tuple $\vec s_c^K$ lying on more than
  one of these subspaces $L$.  An $r$-tuple $\vec t_c^{\ K}$ lies on
  $A_{L^\prime}(L^\prime)$ if and only if the corresponding $r$-tuple
  $\vec s_c^K$ lies on one of the subspaces $L\in \LL^\prime$.  For
  each $c$, the cardinality of the disjoint union of indices $K$ is:
  $$deg_c(A_{L^\prime}(L^\prime))=\sum_{L\in\LL^\prime}deg_c(L).$$
  The equalities in (\ref{eq39}) for
  $A_{L^\prime}(L^\prime)$ follow from the assumed
  equalities for all $L\in\LL^\prime$.
\end{proof}

\subsection{The Invariant}\label{sec4.2}

\begin{notation}\label{not4.3}
  Given an $(r-1)$-dimensional projective subspace $L$ in $\fd P^D$,
  which is the image of a $r$-dimensional subspace ${\mathbf L}$ in
  $\fd^{D+1}$, $L=\pi({\mathbf L})$, let $\bbb=({\mathbf
  b_0},\ldots,{\mathbf b_{r-1}})$ be an ordered basis for ${\mathbf
  L}$.  Given an independent set of vectors ${\mathbf s_e}\in {\mathbf
  L}$, $e=1,\ldots,r$, with $\pi({\mathbf s_e})=s_e$ on $L$, let $\vec
  s$ be the ordered $r$-tuple $(s_1,\ldots,s_r)$, and define
  $\llbracket\vec s\ \rrbracket_{\bbb}\in\fd^1_*$ by the following
  procedure.  The vectors have coordinates in the $\bbb$ basis:
  \begin{equation}\label{eq26}
    {\mathbf s_e}=s_e^0{\mathbf b_0}+\ldots+s_e^{r-1}{\mathbf
  b_{r-1}}\implies[{\mathbf
  s_e}]_\bbb=\left[\begin{array}{c}s_e^0\\\vdots\\s_e^{r-1}\end{array}\right]_\bbb\in\fd^r.
  \end{equation}
  By stacking columns into a square matrix, denote
   \begin{equation}\label{eq17}
    \llbracket\vec s\ \rrbracket_{\bbb}=\det\left(\left[[{\mathbf
    s_1}]_\bbb\cdots[{\mathbf s_e}]_\bbb\cdots[{\mathbf
    s_r}]_\bbb\right]_{r\times r}\right).
  \end{equation}
\end{notation} 
For example, in the $r=2$ case,
\begin{equation}\label{eq24}
    \llbracket\vec s\ \rrbracket_{\bbb}=s_2^1s_1^0-s_2^0s_1^1.
\end{equation}
The $r$-dimensional vector space ${\mathbf L}$, together with the
extra structure in the RHS of (\ref{eq17}), is called a
{\underline{Peano space}} by \cite{bbr}.  The {\underline{Peano
    bracket}} of $\vec s$ as we have defined it in (\ref{eq17})
depends on the choices of basis and representative points, and also on
the ordering of points in $\vec s$.  Note that picking a different
representative $\lambda\cdot{\mathbf s_e}$ for the point $s_e$ and
$\lambda\ne0$ transforms $\llbracket\vec s\ \rrbracket_{\bbb}$ to
$\lambda\cdot\llbracket\vec s\rrbracket_{\bbb}$.

\begin{thm}\label{thm4.4}
  Let $\SSS$ be a $({\mathbf p},r,\ell,D)$-configuration which is a
  weight $\mathbf p$ h-configuration.  For each $c=0,\ldots,n$, choose
  an ordered representative $S_c$ of $\SSS_c$ as in
  {\rm{(\ref{eq37})}}.  For each point $z$ in the set of points
  $\pp(\SSS)=\{s_c^{K,e}\}$, choose one representative vector
  ${\mathbf z}={\mathbf s}_c^{K,e}$.  For each projective
  $(r-1)$-subspace $L$ in the set $\LL(\SSS)$, choose one ordered
  basis $\bbb_L$ for the $r$-subspace ${\mathbf L}=\pi^{-1}(L)$, and
  if the span of $\vec s_c^K$ is $L$, denote $\bbb_{c,K}=\bbb_L$.
  Then the following element of $\fd P({\mathbf p})$ is well-defined,
  depending only on $\SSS$ and $\mathbf p$, and not the above choices.
  $$E_{\mathbf p}(\SSS)=\left[\prod_{K=1}^{\ell p_0}\llbracket\vec
    s_0^{K}\rrbracket_{\bbb_{0,K}}:\ldots:\prod_{K=1}^{\ell
    p_c}\llbracket\vec
    s_c^K\rrbracket_{\bbb_{c,K}}:\ldots:\prod_{K=1}^{\ell
    p_n}\llbracket\vec s_n^K\rrbracket_{\bbb_{n,K}}\right]_{\mathbf
    p}.$$ Further, if $\ttt$ is a $({\mathbf
    p},r,\ell,D^\prime)$-configuration and $\aaa:\SSS\to\ttt$ is a
    morphism, then $E_{\mathbf p}(\SSS)=E_{\mathbf p}(\ttt)$.
\end{thm}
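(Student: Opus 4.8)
The statement has two parts: well-definedness of $E_{\mathbf p}(\SSS)$ and invariance under morphisms. The plan is to handle well-definedness first by tracking how each of the three types of choices affects the tuple of products, and then to reduce invariance to a computation with the linear maps $A_L$.

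\emph{Well-definedness.} There are three kinds of choices: (a) the ordered representative $S_c$ of the unordered list $\SSS_c$; (b) the representative vector $\mathbf z$ for each point $z\in\pp(\SSS)$; (c) the ordered basis $\bbb_L$ for each $\mathbf L=\pi^{-1}(L)$. For (a), reordering the list $\SSS_c$ permutes the factors in the $c$-th product $\prod_K\llbracket\vec s_c^K\rrbracket_{\bbb_{c,K}}$, leaving the product unchanged; it also does not affect any other coordinate. For (b), replacing $\mathbf z$ by $\lambda_z\mathbf z$ with $\lambda_z\in\fd^1_*$ scales each bracket $\llbracket\vec s_c^K\rrbracket_{\bbb_{c,K}}$ in which $z$ occurs by $\lambda_z$ (as noted after Notation \ref{not4.3}), once for each occurrence. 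So the $c$-th product gets multiplied by $\prod_{z\in\pp(\SSS)}\lambda_z^{deg_c(z)}$. Since $\SSS$ is a weight $\mathbf p$ h-configuration, $deg_c(z)=m_z p_c$ for an integer $m_z$ independent of $c$ (Definition \ref{def4.2}, part 1), so the $c$-th product is scaled by $\bigl(\prod_z\lambda_z^{m_z}\bigr)^{p_c}$. Setting $\lambda=\prod_z\lambda_z^{m_z}\in\fd^1_*$, the whole tuple transforms by $(z_0,\dots,z_n)\mapsto(\lambda^{p_0}z_0,\dots,\lambda^{p_n}z_n)$, i.e. $\sim_{\mathbf p}$, so the class in $\fd P(\mathbf p)$ is unchanged. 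For (c), changing $\bbb_L$ to another ordered basis $\bbb_L'$ of the same $\mathbf L$ is effected by an invertible $r\times r$ change-of-basis matrix $M_L$ over $\fd$; by multilinearity and the determinant product rule, every bracket $\llbracket\vec s_c^K\rrbracket_{\bbb_L}$ with span $L$ gets multiplied by $\det(M_L)^{-1}$ (or $\det M_L$, depending on the convention — the exact power is routine to pin down from (\ref{eq26})–(\ref{eq17})). Thus the $c$-th product is multiplied by $\prod_{L\in\LL(\SSS)}(\det M_L)^{\pm deg_c(L)}$, and using part 2 of Definition \ref{def4.2}, $deg_c(L)=m_L p_c$ with $m_L$ independent of $c$, so again the tuple transforms by a single $\sim_{\mathbf p}$-scaling $\lambda=\prod_L(\det M_L)^{\pm m_L}$. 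Hence $E_{\mathbf p}(\SSS)$ is well-defined.

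\emph{Invariance under morphisms.} Let $\aaa:\SSS\to\ttt$ be a morphism with associated $\mathbf A:\uuu(\SSS)\to\fd^{D'+1}$. By Lemma \ref{lem4.13}, $\ttt$ is itself a weight $\mathbf p$ h-configuration, so $E_{\mathbf p}(\ttt)$ is defined; by well-definedness we may compute it using any convenient choices. The key point is that for each $L\in\LL(\SSS)$, the restriction $\mathbf A|_{\mathbf L}$ is an injective $\fd$-linear map onto $\mathbf{A(L)}=\pi^{-1}(A_L(L))$, and by (\ref{eq36}) the $r$-tuple $\vec t_c^{\,K}=A_L(\vec s_c^K)$ whenever $\vec s_c^K$ spans $L$. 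Fix, on the $\SSS$ side, representative vectors $\mathbf s_c^{K,e}$ and bases $\bbb_L$; on the $\ttt$ side, choose the representative vector for $t_c^{K,e}=\aaa(s_c^{K,e})$ to be $\mathbf A(\mathbf s_c^{K,e})$, and choose the basis for $\pi^{-1}(A_L(L))$ to be $\bbb_{A_L(L)}:=(\mathbf A(\mathbf b_0),\dots,\mathbf A(\mathbf b_{r-1}))$, which is indeed an ordered basis since $\mathbf A|_{\mathbf L}$ is injective. With these coordinated choices, the coordinate vector $[\mathbf A(\mathbf s_c^{K,e})]_{\bbb_{A_L(L)}}$ in $\fd^r$ equals $[\mathbf s_c^{K,e}]_{\bbb_L}$, so the $r\times r$ matrix in (\ref{eq17}) is literally the same matrix, giving $\llbracket\vec t_c^{\,K}\rrbracket_{\bbb_{A_L(L)}}=\llbracket\vec s_c^K\rrbracket_{\bbb_L}$ for every $c,K$. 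Multiplying over $K=1,\dots,\ell p_c$ shows the two defining tuples agree coordinatewise, so $E_{\mathbf p}(\ttt)=E_{\mathbf p}(\SSS)$ as elements of $\fd P(\mathbf p)$.

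\emph{Main obstacle.} The conceptual content is entirely in the two parts of Definition \ref{def4.2}: these are exactly what turn an arbitrary rescaling of points (or of bases) into a \emph{single} $\sim_{\mathbf p}$-scaling, which is the whole reason the target is $\fd P(\mathbf p)$ rather than nothing. The delicate bookkeeping is in the basis-change step (c): one must be careful that the subspaces $\mathbf L$ for distinct $L\in\LL(\SSS)$ may coincide or interact — but in fact $\LL(\SSS)$ is a set of \emph{distinct} projective subspaces, so each $\mathbf L$ gets one basis and the $c$-th product factors cleanly over $L$. One should also note that, for the morphism side, distinct $L,L'\in\LL(\SSS)$ may have $A_L(L)=A_{L'}(L')$ (so $\#\LL(\ttt)<\#\LL(\SSS)$, as remarked after Definition \ref{def4.7}); this is harmless here because we are indexing brackets by the $r$-tuples $\vec s_c^K$ and their images $\vec t_c^{\,K}$, not by the subspaces, and each $\vec s_c^K$ has a single well-defined span, so no double-counting occurs. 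I expect the verification that the exponent of $\det M_L$ in step (c) is $\pm deg_c(L)$ (with the sign/convention consistent with (\ref{eq26})) to be the one genuinely fiddly computation, but it is routine multilinear algebra.
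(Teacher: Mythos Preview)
Your well-definedness argument (parts (a), (b), (c)) is essentially the paper's, and is correct up to the routine sign/exponent bookkeeping you flag. The gap is in the morphism-invariance step.

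You push forward choices from $\SSS$ to $\ttt$: for $w=\aaa(z)\in\pp(\ttt)$ you set $\mathbf w:=\mathbf A(\mathbf z)$, and for $A_L(L)\in\LL(\ttt)$ you set $\bbb_{A_L(L)}:=\mathbf A(\bbb_L)$. But a morphism need not be injective on $\pp(\SSS)$, and (as you yourself note) distinct $L,L'\in\LL(\SSS)$ may have $A_L(L)=A_{L'}(L')$. In that case your ``choices'' on the $\ttt$ side are not choices at all: two preimages $z\neq z'$ with $\aaa(z)=\aaa(z')=w$ get assigned representatives $\mathbf A(\mathbf z)$ and $\mathbf A(\mathbf z')$, which are proportional but in general \emph{not equal}; likewise $\mathbf A(\bbb_L)$ and $\mathbf A(\bbb_{L'})$ are two different ordered bases for the same $r$-subspace. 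The recipe for $E_{\mathbf p}(\ttt)$ demands one vector per point of $\pp(\ttt)$ and one basis per element of $\LL(\ttt)$, so the bracket-product tuple you form on the $\ttt$ side is not, a priori, a computation of $E_{\mathbf p}(\ttt)$; and the well-definedness you proved only compares one \emph{consistent} system of choices to another, so it cannot be invoked here. Your remark that this is ``harmless because we index by $r$-tuples, not subspaces'' misidentifies the issue: the indexing is fine, but the hypotheses of the definition you are trying to match are violated.

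The paper avoids this by running the transport in the opposite direction. One fixes consistent data on the $\ttt$ side and \emph{pulls back} via $(\mathbf A|_{\mathbf L})^{-1}$ to obtain bases $\bbb_L$ for each $L\in\LL(\SSS)$ (distinct $L$'s may get distinct bases, so no conflict arises) and representative vectors $\mathbf s_c^{K,e}:=(\mathbf A|_{\mathbf L_{c,K}})^{-1}(\mathbf t_c^{K,e})$ for the points of $\SSS$. The one nontrivial check is that this pulled-back vector depends only on the point $s_c^{K,e}$ and not on the index $(c,K,e)$: if $s_c^{K,e}=s_{c'}^{K',e'}$, both $\mathbf A|_{\mathbf L_{c,K}}$ and $\mathbf A|_{\mathbf L_{c',K'}}$ are restrictions of the single function $\mathbf A$ and hence agree on $\mathbf L_{c,K}\cap\mathbf L_{c',K'}$, and $\mathbf t_c^{K,e}=\mathbf t_{c'}^{K',e'}$ since $\ttt$ has one representative per point. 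With the pulled-back choices the coordinate matrices on the two sides coincide literally, and the equality of brackets follows exactly as in your last paragraph. (Your pushforward can be salvaged, but only by redoing the well-definedness argument at the finer level of index-dependent representatives, using $deg_c^{\SSS}(z)=m_zp_c$ for each preimage $z$ separately; that is more work than you indicate, and less clean than the pullback.)
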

\begin{proof}
  The choice of ordering $S_c$ as in (\ref{eq37}) is used only for
  well-defined indexing; the first thing to prove is that the
  $E_{\mathbf p}$ expression does not depend on this choice.  The
  second part of the Proof is to show the expression does not depend
  on the choices made in computing the bracket (\ref{eq17}).  The
  third part of the Proof is verifying the invariance under morphism.

  First, for each ordered $r$-tuple $\vec s_c^K$ in $S_c$, formula
  (\ref{eq17}) shows that the quantity $\llbracket\vec
  s_c^K\rrbracket_{\bbb_{c,K}}\in\fd^1_*$ depends on a choice of basis
  $\bbb_{c,K}$ and a choice of representative vectors for the $r$
  points.  By the independence property, the $r$ points of each $\vec
  s_c^K$ span a unique projective $(r-1)$-subspace $L$, for which a
  unique basis $\bbb_L$ was chosen, by hypothesis.  So, the basis used
  to compute $\llbracket\vec s_c^K\rrbracket_{\bbb_{c,K}}$ depends
  only on the $r$ points of $\vec s_c^K$ in $\fd P^D$.  Each of the
  points $s_c^{K,e}$ has a representative in $\fd^{D+1}_*$ that does
  not depend on the color index $c$ or the assignment of $K$ index to
  the $r$-tuple $\vec s_c^K$.  The construction as stated in the
  hypothesis requires picking the same representative vector $\mathbf
  z$ when a point appears more than once in the $\SSS$ configuration,
  in $r$-tuples with different indices or colors: if
  $z=s_c^{K,e}=s_{c^\prime}^{K^\prime,e^\prime}$ then ${\mathbf
    z}={\mathbf s}_{c}^{K,e}={\mathbf
    s}_{c^\prime}^{K^\prime,e^\prime}$.  We can conclude that
  $\llbracket\vec s_c^K\rrbracket_{\bbb_{c,K}}$ is computed using
  representative vectors of the points and a basis, both depending
  only on the $r$-tuple of points and not on the index $K$ coming from
  $S_c$.  By commutativity, the product
  $\displaystyle{\prod_{K=1}^{\ell\cdot p_c}\llbracket\vec
    s_c^K\rrbracket_{\bbb_{c,K}}}$ does not depend on the choice of
  ordered representative $S_c$ for $\SSS_c$, nor on ${\mathbf p}$,
  since $\ell\cdot p_c$ is uniquely determined by $\SSS$.  The element
  $$\left[\prod_{K=1}^{\ell p_0}\llbracket\vec
    s_0^{K}\rrbracket_{\bbb_{0,K}}:\ldots:\prod_{K=1}^{\ell
    p_n}\llbracket\vec s_n^K\rrbracket_{\bbb_{n,K}}\right]_{\mathbf
    p}\in\fd P({\mathbf p})$$ may depend on ${\mathbf p}$, as in
    Theorem \ref{thm1.12}.  We can conclude so far that the above
    expression depends only on $\SSS$ and $\mathbf p$, not on any of
    the choices of $S_c$.

  The independence property implies the quantities $\llbracket\vec
  s_c^K\rrbracket_{\bbb_{c,K}}$ are all non-zero, so each of the $n+1$
  components in the $E_{\mathbf p}$ expression is non-zero:
  $E_{\mathbf p}(\SSS)\in D_{\mathbf p}$.

  For the second part of the Proof, as previously mentioned, for each
  point $z$ occurring with any multiplicity in the $\SSS$
  configuration, the construction of the Theorem requires choosing a
  fixed representative $\mathbf z$.  Changing the choice of
  representative for that point, $\lambda\cdot{\mathbf z}$ instead of
  $\mathbf z$, changes each $\llbracket\vec
  s_c^K\rrbracket_{\bbb_{c,K}}$ quantity to
  $\lambda\cdot\llbracket\vec s_c^K\rrbracket_{\bbb_{c,K}}$, as
  remarked after Notation \ref{not4.3}, for every $\vec s_c^K$ that
  has $z$ as one of its $r$ points (and only one, by independence).
  In each expression $\displaystyle{\prod_{K=1}^{\ell\cdot
  p_c}\llbracket\vec s_c^K\rrbracket_{\bbb_{c,K}}}$ (with color index
  $c$), there are $deg_c(z)$ (possibly repeated) $r$-tuples $\vec
  s_c^K$ with $z$ as one of its $r$ points, so changing ${\mathbf z}$
  to $\lambda\cdot{\mathbf z}$ changes the product expression by a
  factor of $\lambda^{deg_c(z)}$.  By part 1.\ of Definition
  \ref{def4.2}, there is some integer $y$ depending on $z$ but not
  $c$, so that $deg_c(z)=y\cdot p_c$.  Since for each $c$, the product
  changes by a factor of $(\lambda^y)^{p_c}$, the $\sim_{\mathbf
  p}$-equivalence class of the $E_{\mathbf p}$ expression does not
  depend on the choice of $\lambda$ or ${\mathbf z}$.

  For a projective $(r-1)$-subspace $L$, the value of the bracket
  $\llbracket\vec s_c^K\rrbracket_{\bbb_{c,K}}$ depends on the choice
  of ordered basis $\bbb_{c,K}=\bbb_L=\left({\mathbf
    b}_{0},\ldots,{\mathbf b}_{r-1}\right)$ in the following way: let
  $\bbb_L^\prime$ be another ordered basis of the same $r$-dimensional
  space ${\mathbf L}$.  Then there exists a $r\times r$ invertible
  matrix $Q$ which changes $\bbb_L$-coordinates to
  $\bbb_L^\prime$-coordinates, via matrix multiplication: if the
  $\bbb_L$-coordinate column vector of ${\mathbf s}_c^{K,e}$ is as
  in (\ref{eq26}), then the $\bbb_L^\prime$-coordinate column vector
  is $[{\mathbf s}_c^{K,e}]_{\bbb_L^\prime}=Q[{\mathbf
      s}_c^{K,e}]_{\bbb_L}$.  Applying the $Q$ coordinate change
  matrix to each column in the determinant (\ref{eq17}) transforms the
  bracket by the well-known formula
  \begin{eqnarray*}
    \llbracket\vec s_c^K\rrbracket_{\bbb_L^\prime}&=&\det\left(\left[(Q[{\mathbf
    s}_c^{K,1}]_{\bbb_L})\cdots(Q[{\mathbf
    s}_c^{K,r}]_{\bbb_L})\right]_{r\times r}\right)\\
    &=&\det(Q)\det\left(\left[[{\mathbf
    s}_c^{K,1}]_{\bbb_L}\cdots[{\mathbf
    s}_c^{K,r}]_{\bbb_L}\right]_{r\times r}\right)\\
    &=&\det(Q)\llbracket\vec s_c^{K}\rrbracket_{\bbb_L}.
  \end{eqnarray*}
  We can conclude that for any $L$, changing the choice of ordered
  basis $\bbb_L$ to a new basis $\bbb_L^\prime$, and using this new
  basis for every bracket expression for an $r$-tuple on $L$, results
  in changing each expression with color index $c$,
  $\displaystyle{\prod_{K=1}^{\ell p_c}\llbracket\vec
    s_c^K\rrbracket_{\bbb_{c,K}}}$, by a factor of
  $(\det(Q))^{deg_c(L)}$, where $deg_c(L)=m_Lp_c$, and $m_L$ does not
  depend on $c$, by part 2.\ of Definition \ref{def4.2}.  Since for
  each $c$, the product changes by a factor of
  $(\det(Q)^{m_L})^{p_c}$, the $\sim_{\mathbf p}$-equivalence class of
  $E_{\mathbf p}$ is unchanged.  This shows that $E_{\mathbf p}$ does
  not depend on the choices made as in the statement of the Theorem,
  which are required to compute the brackets $\llbracket\vec
  s_c^K\rrbracket_{\bbb_L}$.

  Thirdly, by Lemma \ref{lem4.13}, if there is a morphism
  $\aaa:\SSS\to\ttt$, then $\ttt$ is also a weight $\mathbf p$
  h-configuration, and so the expression $E_{\mathbf p}(\ttt)$ is
  well-defined by the previous part of this Proof.  As in the Proof of
  Lemma \ref{lem4.13}, an ordering for $\SSS_c$ corresponds to one for
  $\ttt_c$, giving an indexing as in (\ref{eq38}). 

  For each projective $(r-1)$-subspace $A_{L}(L)$ in the set
  $\LL(\ttt)$, pick an ordered basis $\cc$ for the linear $r$-subspace
  ${\mathbf A}|_{{\mathbf L}}({\mathbf L})$, as in the hypothesis of
  the Theorem applied to $\LL(\ttt)$.  For any ${\mathbf L}^\prime$
  with $A_{L^\prime}(L^\prime)=A_L(L)$, ${\mathbf A}|_{{\mathbf
  L}^\prime}$ is linear and one-to-one, so $\left({\mathbf
  A}|_{{\mathbf L}^\prime}\right)^{-1}(\cc)$ is an ordered basis for
  ${\mathbf L}^\prime$, and setting $\bbb_{L^\prime}=\left({\mathbf
  A}|_{{\mathbf L}^\prime}\right)^{-1}(\cc)$ satisfies the uniqueness
  hypothesis of the Theorem applied to $\LL(\SSS)$.

  Dually, for each point $w=t_c^{K,e}$ in the set $\pp(\ttt)$, pick a
  representative ${\mathbf w}={\mathbf t}_c^{K,e}$ in
  $\fd^{D^\prime+1}_*$ as in the hypothesis.  For an index $(c,K,e)$,
  the point $s_c^{K,e}$ lies on a $(r-1)$-subspace $L_{c,K}$ spanned
  by $\vec s_c^K$, and satisfies $A_{L_{c,K}}(s_c^{K,e})=t_c^{K,e}$,
  and has a representative vector $({\mathbf A}|_{{\mathbf
      L_{c,K}}})^{-1}({\mathbf t}_c^{K,e})$ in $\fd^{D+1}_*$.  To show
  that this representative vector depends only on the point and not on
  the index, suppose $(c^\prime,K^\prime,e^\prime)$ is any other index
  with $s_c^{K,e}=s_{c^\prime}^{K^\prime,e^\prime}$; then the point is
  on both projective $(r-1)$-subspaces $L_{c,K}$ and
  $L_{c^\prime,K^\prime}$, and ${\mathbf A}|_{{\mathbf L}_{c,K}}$ and
  ${\mathbf A}|_{{\mathbf L}_{c^\prime,K^\prime}}$ agree on the
  intersection ${\mathbf L}_{c,K}\cap{\mathbf L}_{c^\prime,K^\prime}$
  because they are restrictions of the same map $\mathbf A$.  Since
  ${\mathbf t}_c^{K,e}={\mathbf t}_{c^\prime}^{K^\prime,e^\prime}$, we
  can conclude $({\mathbf A}|_{{\mathbf L}_{c,K}})^{-1}({\mathbf
    t}_c^{K,e})=({\mathbf A}|_{{\mathbf
      L}_{c^\prime,K^\prime}})^{-1}({\mathbf
    t}_{c^\prime}^{K^\prime,e^\prime})$, and denote this
  representative vector ${\mathbf s}_c^{K,e}$.

  Now, fix an index pair $(c,K)$ and consider corresponding $r$-tuples
  $\vec s_c^K$ and $\vec t_c^{\ K}$, lying on subspaces $L_{c,K}$ and
  $A_{L_{c,K}}(L_{c,K})$ as above.  The coordinate vector of ${\mathbf
  t}_c^{K,e}$ with respect to the ordered basis $\cc=\left({\mathbf
  c}_{0},\ldots,{\mathbf c}_{r-1}\right)$ of ${\mathbf A}|_{{\mathbf
  L_{c,K}}}({\mathbf L}_{c,K})$ is related to the coordinate vector of
  ${\mathbf s}_c^{K,e}$ with respect to the ordered basis
  $\bbb_{c,K}=\left({\mathbf A}|_{{\mathbf L_{c,K}}}\right)^{-1}(\cc)$
  of ${\mathbf L}_{c,K}$, by the linearity of ${\mathbf A}|_{{\mathbf
  L}_{c,K}}$:
  \begin{eqnarray*}
    ({\mathbf s}_c^{K,e})&=&\left({\mathbf A}|_{{\mathbf
  L}_{c,K}}\right)^{-1}({\mathbf t}_c^{K,e})\\ &=&\left({\mathbf
  A}|_{{\mathbf L}_{c,K}}\right)^{-1}(t_c^{K,e,0}{\mathbf
  c}_{0}+\ldots+t_c^{K,e,r-1}{\mathbf c}_{r-1})\\
  &=&t_c^{K,e,0}\left({\mathbf A}|_{{\mathbf
  L}_{c,K}}\right)^{-1}({\mathbf
  c}_{0})+\ldots+t_c^{K,e,r-1}\left({\mathbf A}|_{{\mathbf
  L}_{c,K}}\right)^{-1}({\mathbf c}_{r-1})\\
  &=&t_c^{K,e,0}{\mathbf b}_{c,K,0}+\ldots+t_c^{K,e,r-1}{\mathbf
  b}_{c,K,r-1},
  \end{eqnarray*}
  i.e., the $\bbb_{c,K}$-coordinates of ${\mathbf s}_c^{K,e}$ are the
  same as the $\cc$-coordinates of ${\mathbf t}_c^{K,e}$, and
  \begin{eqnarray*}
  \llbracket\vec
    s_c^K\rrbracket_{\bbb_{c,K}}&=&\det\left(\left[[{\mathbf
    s}_c^{K,1}]_{\bbb_{c,K}}\cdots[{\mathbf
    s}_c^{K,r}]_{\bbb_{c,K}}\right]_{r\times r}\right)\\
    &=&\det\left(\left[[{\mathbf t}_c^{K,1}]_{\cc}\cdots[{\mathbf
    t}_c^{K,r}]_{\cc}\right]_{r\times r}\right)\\ &=&\llbracket\vec
    t_c^{\ K}\rrbracket_{\cc}.
  \end{eqnarray*}
  Using these brackets to compute the products in the $E_{\mathbf
  p}(\SSS)$ expression, and the previously established fact that
  $E_{\mathbf p}(\SSS)$ does not depend on the indexing $S_c$, or the
  choices of $\bbb_L$ or representative vectors, the claimed equality
  $E_{\mathbf p}(\SSS)=E_{\mathbf p}(\ttt)$ is proved.
\end{proof}

\begin{example}\label{ex4.5}
  For ${\mathbf p}=(1,1)$, $n=1$ and there are two colors.
  $E_{(1,1)}(\SSS)$ is a ratio of products of $\ell$ determinants of
  size $r\times r$, which, as stated in the Introduction, would have
  been recognizable before Eves' time.  The case ${\mathbf p}=(1,1)$,
  $r=2$, of Theorem \ref{thm4.4} can be called a purely projective, or
  algebraic, version of Eves' Theorem, in comparison to the Euclidean,
  or metric, version, Theorem 6.2.2 of \cite{e}.  The connection
  between the determinantal expression and Eves' formula involving
  Euclidean signed lengths in $\re^D$ is discussed in Section
  \ref{sec4}.
  
  For $r=2$, in a $((1,1),2,\ell,D)$-configuration $\SSS$, $\SSS_0$ is
  a list of $\ell$ black directed segments in $\fd P^D$, and $\SSS_1$
  is a list of $\ell$ red segments.  If $\SSS$ is a weight $\mathbf p$
  h-configuration (which in this ${\mathbf p}=(1,1)$, $r=2$ case we
  just call an {\underline{h-configuration}}), then there are $\ell$
  (counting with multiplicity) lines $[L_1,\ldots,L_\ell]$ with one
  black segment $\vec s_0^K$ and one red segment $\vec s_1^K$ on each
  line, and at each point, the black degree equals the red degree.
  The following element of $\fd P^1$, where each expression
  $\llbracket\vec s_c^K\rrbracket_{\bbb_{c,K}}$ is calculated as in
  Theorem \ref{thm4.4}, is well-defined and invariant under projective
  transformations of $\fd P^D$:
  $$E_{(1,1)}(\SSS)=\left[\prod_{K=1}^\ell\llbracket\vec
  s_0^K\rrbracket_{\bbb_{0,K}}:\prod_{K=1}^\ell\llbracket\vec
  s_1^K\rrbracket_{\bbb_{1,K}}\right].$$ Eves calls the ratio
  $$\frac{\displaystyle{\prod_{K=1}^\ell\llbracket\vec
  s_1^K\rrbracket_{\bbb_{1,K}}}}{\displaystyle{\prod_{K=1}^\ell\llbracket\vec
  s_0^K\rrbracket_{\bbb_{0,K}}}}$$ an ``h-expression'': each line
  $L_K$ occurs equally often (multiplicity $m_{L_K}$) in the numerator
  and denominator, and each point in $\pp(\SSS)$ occurs equally often
  in the numerator (red degree) and denominator (black degree).
\end{example}
\begin{example}\label{ex4.6}
  Consider four distinct points $\alpha$, $\beta$, $\gamma$, $\delta$
  on the projective line $\fd P^1$.  These can be organized into an
  h-configuration $\SSS$, with ${\mathbf p}=(1,1)$ and $r=2$ as in
  Example \ref{ex4.5}, dimension $D=1$, and $\ell=2$.  Let
  $\SSS_0=[(\delta,\alpha),(\gamma,\beta)]$ be a list of black
  segments, and let $\SSS_1=[(\gamma,\alpha),(\delta,\beta)]$ be a
  list of red segments, as shown in Figure \ref{fig3}.  Then
  $\LL(\SSS)$ is the singleton set $\{L=\fd P^1\}$; we could, as
  mentioned after Definition \ref{def4.2}, consider the line occurring
  with multiplicity two in the unordered list $[L_1,L_2]$ with
  $L=L_1=L_2$.  Choose the standard ordered basis
  $\bbb_L=\left((1,0),(0,1)\right)$ of $\fd^2$, so $\alpha$ has
  homogeneous coordinates $[\alpha_0:\alpha_1]$, vector representative
  $\alpha_0{\mathbf b}_{0}+\alpha_1{\mathbf b}_{1}$, and
  $\bbb_L$-coordinate vector
  $\left[\begin{array}{c}\alpha_0\\\alpha_1\end{array}\right]$, and
  similarly for the other points.  Let $S_0=(\vec
  s_0^1=(\delta,\alpha),\vec s_0^{\ 2}=(\gamma,\beta))$ be an ordered
  representative of $\SSS_0$ and let $S_1=(\vec
  s_1^1=(\gamma,\alpha),\vec s_1^{\ 2}=(\delta,\beta))$ be an ordered
  representative of $\SSS_1$.  Each endpoint has black degree and red
  degree both equal to $1$, and the line $L$ satisfies Part 2.\ of
  Definition \ref{def4.2}, with $deg_0(L)=deg_1(L)=2$.  Alternatively,
  we could assign one of the black segments and one of the red
  segments to $L_1=L$, and the remaining segments to $L_2=L$; there
  are various choices of such assignments, which would not affect the
  expression (\ref{eq31}).  The expression from Theorem \ref{thm4.4}
  is:
  \begin{eqnarray}
    E_{(1,1)}(\SSS)&=&\left[\llbracket\vec
    s_0^1\rrbracket_{\bbb_L}\llbracket\vec s_0^{\
    2}\rrbracket_{\bbb_L}:\llbracket\vec
    s_1^1\rrbracket_{\bbb_L}\llbracket\vec s_1^{\
    2}\rrbracket_{\bbb_L}\right]\label{eq31}\\
    &=&\left[(\alpha_1\delta_0-\alpha_0\delta_1)(\beta_1\gamma_0-\beta_0\gamma_1):(\alpha_1\gamma_0-\alpha_0\gamma_1)(\beta_1\delta_0-\beta_0\delta_1)\right],\nonumber
  \end{eqnarray}
  which is exactly the well-known cross-ratio of the ordered quadruple
  $(\alpha,\beta,\gamma,\delta)$.

\begin{center}
  \begin{figure}
\begin{center}
    \includegraphics[scale=0.6]{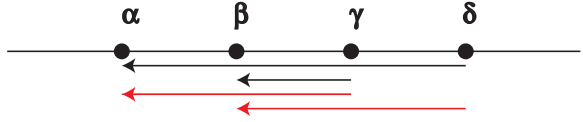}
    \caption{A configuration of $4$ points, $1$ line, and $4$ ordered
      pairs, as indicated by the red and black arrows drawn offset
      from the line.}\label{fig3}
  \end{center}
  \end{figure}
\end{center}

\end{example}
In classical Invariant Theory, the fundamental property of projective
invariance of the cross-ratio was often proved using determinants and
algebraic methods similar to our Proof of Theorem \ref{thm4.4} (e.g.,
\cite{c}; \cite{s} Arts.\ XIII.136, 137, XVII.195).  In projective
geometry, the general idea that projective transformations introduce
canceling factors in certain product expressions already appears in
(\cite{p} \S 20).

\section{Metric versions}\label{sec4}

Eves' Theorem as stated in \cite{e} is about ratios of signed lengths
of directed segments, in the real Euclidean plane extended to include
points at infinity.  The earlier identities of \cite{p}, and
interesting applications of Eves' Theorem, including Ceva's Theorem
and others appearing in (\cite{e} \S 6.2), \cite{f}, and
\cite{shephard}, also involve Euclidean distance between pairs of
points.  The constructions in Section \ref{sec3} were developed in
terms of linear algebra and projective geometry, avoiding any notion
of distance.  However, there are connections between projective
geometry and Euclidean geometry --- a thorough, modern treatment is
given by \cite{rg}, relating Cartesian coordinates in affine
neighborhoods and bracket operations (as in the above Notation
\ref{not4.3}) to distance, area, volume, angles, etc.

For this Section, we consider only the case $\fd=\re$, and start by
incorporating a notion of distance, as a bit of extra structure added
to the projective coordinate system.

Consider, as in Example \ref{ex1.2}, $\re^{D+1}$ with coordinates
${\mathbf x}=(x_0,x_1,\ldots,x_D)$, the projection
$\pi:\re^{D+1}_*\to\re P^D$, and homogeneous coordinates
$x=[x_0:x_1:\ldots:x_D]$ for $\re P^D$.  The restriction of $\pi$ to
the hyperplane $\{(1,x_1,x_2,\ldots,x_D)\}$ is one-to-one onto the
image $\{x:x_0\ne0\}$ in $\re P^D$.  We can refer to this affine
neighborhood as $\re^D$, where a point in $\re^D$ has both homogeneous
and affine coordinates: $x=[1:x_1:\ldots:x_D]=(x_1,\ldots,x_D)$, and
also is the image of a representative vector: $x=\pi({\mathbf
  x})=\pi(1,x_1,\ldots,x_D)$.

The extra structure we choose to assign to the affine neighborhood
$\re^D$ is that of a {\underline{normed vector space}}, where the
vector space structure is the usual one from the affine coordinate
system $(x_1,\ldots,x_D)$, and $\|\ . \ \|$ is any norm function.
Then there is a {\underline{distance function}} on $\re^D$: ${\mathbf
  d}(x,y)=\|y-x\|$.

In the $r=2$ case, we are interested in directed segments on lines.
Given a line $L$ in $\re^D$ (meaning, a non-empty intersection of a
projective line $L=\pi({\mathbf L})$ with the $\{x:x_0\ne0\}$
neighborhood), it can be parametrized by choosing a start point $b_0$
and a non-zero direction vector $v$, so $L=\{b_0+tv:t\in\re\}$.  The
choice of $v$ also determines a {\underline{direction}} for the line:
an ordered pair of distinct points $(b_0+t_1v,b_0+t_2v)$ is a
positively (or negatively) directed segment depending on the sign of
$t_2-t_1$.  There exists a unique $t$ value so that $t>0$ and the
point $b_1=b_0+tv$ satisfies ${\mathbf d}(b_0,b_1)=1$.  Choose these
representative vectors in $\re^{D+1}$ for $b_0$ and $b_1$: ${\mathbf
  b_0}=(1,b_0^1,\ldots,b_0^D)$ and ${\mathbf
  b_1}=(1,b_1^1,\ldots,b_1^D)$.  So, choosing a start point and a
direction for the affine line $L$ determines (and is determined by) an
ordered basis $\bbb=({\mathbf b_0},{\mathbf b_1})$ (with both points
in $\{x_0=1\}$) for the plane ${\mathbf L}$.

Consider two distinct points $\alpha$, $\beta$ on the line $L$ in
$\re^D\subseteq\re P^D$.  If we re-parametrize $L$ using $b_1-b_0$ as
a direction vector,
\begin{eqnarray*}
  \alpha&=&(b_0^1+t_1(b_1^1-b_0^1),\ldots,b_0^D+t_1(b_1^D-b_0^D)),\\
  \beta&=&(b_0^1+t_2(b_1^1-b_0^1),\ldots,b_0^D+t_2(b_1^D-b_0^D)).
\end{eqnarray*}
The distance from $\alpha$ to $\beta$ does not depend on the choice of
start point $b_0$ nor the direction; it satisfies:
\begin{eqnarray*}
  {\mathbf d}(\alpha,\beta)&=&\|\beta-\alpha\|=\|(t_2-t_1)(b_1-b_0)\|\\
  &=&|t_2-t_1|\|b_1-b_0\|=|t_2-t_1|.
\end{eqnarray*}
The {\underline{signed length}} of the directed segment
$(\alpha,\beta)=\overrightarrow{\alpha\beta}$ is $t_2-t_1$, which
depends on the direction but not the start point.  Choosing the
representative vectors
\begin{eqnarray}
  \ba&=&(1,b_0^1+t_1(b_1^1-b_0^1),\ldots,b_0^D+t_1(b_1^D-b_0^D))=(1-t_1){\mathbf b_0}+t_1{\mathbf b_1},\label{eq18}\\
  \bbeta&=&(1,b_0^1+t_2(b_1^1-b_0^1),\ldots,b_0^D+t_2(b_1^D-b_0^D))=(1-t_2){\mathbf b_0}+t_2{\mathbf b_1},\nonumber
\end{eqnarray}
the signed length is exactly the same as the bracket formula
(\ref{eq24}):
\begin{eqnarray*}
  \llbracket\overrightarrow{\alpha\beta}\rrbracket_\bbb&=&t_2(1-t_1)-(1-t_2)t_1=t_2-t_1.
\end{eqnarray*}
When $\llbracket\overrightarrow{\alpha\beta}\rrbracket_\bbb$
expressions are used in Theorem \ref{thm4.4}, the expression
$E_{\mathbf p}(\SSS)$ does not depend on the choice of representative
vectors $\ba$, $\bbeta$ as long as representatives are chosen
consistently (as in (\ref{eq18})), nor on the choice of $\bbb$ as long
as that ordered basis is used for all directed segments on that line.
Since the construction requires each line $L$ to be assigned a unique
ordered basis $\bbb_L$, each line can have its own choice of direction
determined by $\bbb_L$, and a unit of length depending on a norm $\|\
. \ \|_L$.  So, a metric version for the $r=2$ (directed segments)
case of Theorem \ref{thm4.4} can be stated as follows.
\begin{cor}\label{cor5.1}
  Given a weight $\mathbf p$ h-configuration $\SSS$ of points in
  $\re^D$ with $r=2$, choose ordered representatives as in
  \mbox{\rm{(\ref{eq37})}}, and for each line $L$ in the set
  $\LL(\SSS)$, choose a direction and unit of length so that
  $\llbracket\overrightarrow{\alpha\beta}\rrbracket$ denotes the
  signed length of directed segments on the line through $\alpha$ and
  $\beta$.  The following element of $\re P({\mathbf p})$ does not
  depend on the choices of ordered representatives, directions, or
  unit lengths.
  $$E_{\mathbf p}(\SSS)=\left[\prod_{K=1}^{\ell p_0}\llbracket\vec
    s_0^K\rrbracket:\ldots:\prod_{K=1}^{\ell p_c}\llbracket\vec
    s_c^K\rrbracket:\ldots:\prod_{K=1}^{\ell p_n}\llbracket\vec
    s_n^K\rrbracket\right]_{\mathbf p}.$$ Further, $E_{\mathbf
    p}(\SSS)$ is invariant under a morphism that maps the points in
    $\SSS$ into an affine neighborhood $\re^{D^\prime}$.  \boxx
\end{cor}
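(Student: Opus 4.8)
The plan is to deduce Corollary \ref{cor5.1} as a straightforward specialization of Theorem \ref{thm4.4}: I would observe that ``choosing a direction and a unit of length'' on each line is merely a concrete way of making the choices --- an ordered basis $\bbb_L$ for each ${\mathbf L}=\pi^{-1}(L)$, and a representative vector $\mathbf z$ for each point --- already permitted in the hypothesis of Theorem \ref{thm4.4}, and that under these choices the Peano bracket $\llbracket\overrightarrow{\alpha\beta}\rrbracket_{\bbb_L}$ is exactly the signed length, as the computation leading up to {\rm(\ref{eq18})} and {\rm(\ref{eq24})} already shows.

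First I would translate the metric data into the language of Section \ref{sec3}. Since all points of $\SSS$ lie in the affine neighborhood $\re^D=\{x_0\ne0\}$, each line $L\in\LL(\SSS)$ meets $\re^D$, and the discussion preceding the Corollary shows that a start point, a direction vector, and the norm $\|\ .\ \|_L$ together single out two points of $L$ at distance $1$ apart, hence an ordered basis $\bbb_L=({\mathbf b_0},{\mathbf b_1})$ of ${\mathbf L}$ with both ${\mathbf b_0}$, ${\mathbf b_1}$ in the hyperplane $\{x_0=1\}$; conversely such a basis recovers a direction and a unit of length. I would also note that each point $z\in\pp(\SSS)\subseteq\re^D$ has a canonical representative vector $\mathbf z$ whose $x_0$-coordinate is $1$, depending only on $z$ and not on its color $c$, index $K$, or any line through it; this is exactly the consistency demanded in Theorem \ref{thm4.4}. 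Then the one-line determinant computation in {\rm(\ref{eq18})}, {\rm(\ref{eq24})} yields $\llbracket\overrightarrow{\alpha\beta}\rrbracket_{\bbb_L}=t_2-t_1$, the signed length for the direction fixed by $\bbb_L$.

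With this dictionary in hand, I would argue that the element displayed in the Corollary is literally $E_{\mathbf p}(\SSS)$ from Theorem \ref{thm4.4} computed with these particular choices of $S_c$, $\bbb_L$, and $\mathbf z$. Independence from the choice of the ordered representatives $S_c$ as in {\rm(\ref{eq37})} is then immediate from Theorem \ref{thm4.4}; and changing a direction or unit length on some line just replaces $\bbb_L$ by another ordered basis of ${\mathbf L}$ of the same (``$x_0=1$'') type, which by Theorem \ref{thm4.4} leaves $E_{\mathbf p}(\SSS)$ unchanged. For the last sentence of the Corollary, if $\aaa:\SSS\to\ttt$ is a morphism whose image lies in an affine neighborhood $\re^{D^\prime}$, then $\ttt$ is a weight $\mathbf p$ h-configuration by Lemma \ref{lem4.13}, Theorem \ref{thm4.4} gives $E_{\mathbf p}(\SSS)=E_{\mathbf p}(\ttt)$, and the same reasoning applied to $\ttt$ shows $E_{\mathbf p}(\ttt)$ may be computed by the metric recipe using any choice of directions and unit lengths for the lines of $\LL(\ttt)$.

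The main --- though modest --- obstacle I anticipate is bookkeeping rather than substance: a single point of $\pp(\SSS)$ may lie on several lines of $\LL(\SSS)$, so its bracket contributions get computed in several different bases $\bbb_L$, and one must be sure this still fits the hypothesis of Theorem \ref{thm4.4}. It does, precisely because the ``$x_0=1$'' representative $\mathbf z$ is intrinsic to the point and is merely re-expressed in each line's basis, not re-chosen line by line; I would also invoke the earlier observation that the Euclidean signed length depends on the chosen direction but not on the start point, so ``a direction and a unit of length'' carries exactly the information of ``an ordered basis of ${\mathbf L}$ with both vectors in $\{x_0=1\}$,'' with no leftover ambiguity to track.
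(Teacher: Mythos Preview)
Your proposal is correct and follows essentially the same approach as the paper: the paper gives no separate proof for this Corollary (note the \boxx\ immediately after the statement), instead relying on the discussion preceding it, which is exactly your dictionary --- identify a choice of direction and unit length on each line with an ordered basis $\bbb_L$ of ${\mathbf L}$ having both vectors in $\{x_0=1\}$, take the $x_0=1$ representative for each point, observe via (\ref{eq18}) and (\ref{eq24}) that the bracket then equals the signed length, and invoke Theorem \ref{thm4.4}. Your explicit remark that the $x_0=1$ representative is intrinsic to the point (so the consistency hypothesis of Theorem \ref{thm4.4} is met even when a point lies on several lines) is a point the paper leaves implicit.
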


The Peano bracket also admits a Euclidean interpretation in the above
coordinate system for configurations with $r=3$ and $D=2$ (see
\cite{bb}, \cite{crg}, in addition to the previously mentioned
\cite{rg}).  However, in order for the bracket to define a Euclidean
area in $\re^2$, we must use the Euclidean magnitude $\|\ . \ \|$,
defined by the standard dot product in the affine coordinate system
$(x_1,x_2)$.  Let $L$ be the entire real projective plane $L=\re P^2$,
and let ${\mathbf L}=\re^3$.  Pick the standard ordered basis $\bbb$,
so that three points $\alpha=(\alpha_1,\alpha_2)$,
$\beta=(\beta_1,\beta_2)$, $\gamma=(\gamma_1,\gamma_2)$ in
$\re^2\subseteq\re P^2$ have representatives with $\bbb$-coordinates
$\ba=\left[\begin{array}{c}1\\\alpha_1\\\alpha_2\end{array}\right]_\bbb$,
etc.  Then,
$$\llbracket(\alpha,\beta,\gamma)\rrbracket_\bbb=\det\left(\left[\left[\begin{array}{c}1\\\alpha_1\\\alpha_2\end{array}\right]_\bbb\left[\begin{array}{c}1\\\beta_1\\\beta_2\end{array}\right]_\bbb\left[\begin{array}{c}1\\\gamma_1\\\gamma_2\end{array}\right]_\bbb\right]_{3\times3}\right)=2Area\triangle(\alpha\beta\gamma),$$
twice the signed area of the triangle (\cite{e} \S 2.1), which depends
on the ordering of the three points and the (previously chosen)
standard Euclidean structure on $\re^2$.  The points with affine
coordinates $(0,0)$, $(1,0)$, $(0,1)$, in that order, form a
counter-clockwise triangle with positive area $\frac12$.

The following Example of ratios of areas was described by \cite{c} as
a ``graphometric'' quantity: a Euclidean measurement invariant under
projective transformations.
\begin{example}\label{ex5.2}
  Consider six points, labeled ${\mathnormal1},{\mathnormal2},{\mathnormal3},{\mathnormal4},{\mathnormal5},{\mathnormal6}$, in the plane
  $\re^2\subseteq\re P^2$.  They can be organized into a
  $((1,1),3,2,2)$-configuration $\SSS=(\SSS_0,\SSS_1)$, where
  $$\SSS_0=[\triangle({\mathnormal1}{\mathnormal2}{\mathnormal4}),\triangle({\mathnormal3}{\mathnormal5}{\mathnormal6})]$$
  is a list of two black triangles, and
  $$\SSS_1=[\triangle({\mathnormal1}{\mathnormal2}{\mathnormal3}),\triangle({\mathnormal4}{\mathnormal5}{\mathnormal6})]$$
  is a list of two red triangles (assuming non-collinearity of the
  indicated triples), as in Figure \ref{fig4}.  Then
  $\LL(\SSS)=\{L=\re P^2\}$, and we choose the standard basis $\bbb$
  as above.  As in Example \ref{ex4.6}, $deg_0(L)=deg_1(L)=2$, or we
  could consider a list $[L_1,L_2]$ with $L_1=L_2=L$, and one black
  triangle and one red triangle is assigned to each of $L_1$ and
  $L_2$.  Each of the six points in $\pp(\SSS)$ is a vertex of one
  black triangle and one red triangle, so the black degree equals the
  red degree and $\SSS$ is a weight $(1,1)$ h-configuration.  The
  invariant from Theorem \ref{thm4.4} is analogous to (\ref{eq31}):
  \begin{eqnarray*}
    E_{(1,1)}(\SSS)&=&\left[\llbracket\vec s_0^1\rrbracket_{\bbb}\llbracket\vec s_0^{\ 2}\rrbracket_{\bbb}:\llbracket\vec s_1^1\rrbracket_{\bbb}\llbracket\vec s_1^{\ 2}\rrbracket_{\bbb}\right]\\
    &=&\left[\llbracket\triangle({\mathnormal1}{\mathnormal2}{\mathnormal4})\rrbracket_{\bbb}\llbracket\triangle({\mathnormal3}{\mathnormal5}{\mathnormal6})\rrbracket_{\bbb}:\llbracket\triangle({\mathnormal1}{\mathnormal2}{\mathnormal3})\rrbracket_{\bbb}\llbracket\triangle({\mathnormal4}{\mathnormal5}{\mathnormal6})\rrbracket_{\bbb}\right].
  \end{eqnarray*}
  We can conclude that the ratio of signed areas
  $$\frac{(Area\triangle({\mathnormal1}{\mathnormal2}{\mathnormal3}))(Area\triangle({\mathnormal4}{\mathnormal5}{\mathnormal6}))}{(Area\triangle({\mathnormal1}{\mathnormal2}{\mathnormal4}))(Area\triangle({\mathnormal3}{\mathnormal5}{\mathnormal6}))}$$
  is an invariant of the configuration $\SSS$ under projective
  transformations (that do not send any of the six points to
  infinity).
\end{example}
  We remark that the property $E_{(1,1)}(\SSS^{(0,1)})=[1:1]$, or
  equivalently
  $\llbracket\triangle({\mathnormal1}{\mathnormal2}{\mathnormal4})\rrbracket_{\bbb}\llbracket\triangle({\mathnormal3}{\mathnormal5}{\mathnormal6})\rrbracket_{\bbb}-\llbracket\triangle({\mathnormal1}{\mathnormal2}{\mathnormal3})\rrbracket_{\bbb}\llbracket\triangle({\mathnormal4}{\mathnormal5}{\mathnormal6})\rrbracket_{\bbb}=0$,
  admits a projective (not necessarily Euclidean) interpretation as
  the concurrence of the lines through
  $\{{\mathnormal1},{\mathnormal2}\}$,
  $\{{\mathnormal3},{\mathnormal4}\}$,
  $\{{\mathnormal5},{\mathnormal6}\}$ (\cite{crg}, \cite{rg} Ch.\ 6).

\begin{center}
  \begin{figure}
\begin{center}
    \includegraphics[scale=0.6]{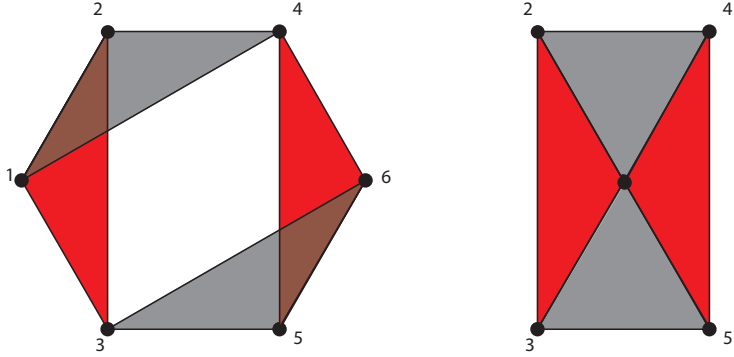}
    \caption{Left: A configuration of $6$ points and $4$ triangles in
      the real plane, from Example \ref{ex5.2}.  Right: The points
      ${\mathnormal1}$ and ${\mathnormal6}$ coincide, as in Example
      \ref{ex5.3}.}\label{fig4}
  \end{center}
  \end{figure}
\end{center}

\begin{example}\label{ex5.3}
  In the configuration from Example \ref{ex5.2}, it is possible for
  $\SSS$ to be a weight $(1,1)$ h-configuration even if points
  ${\mathnormal1}$ and ${\mathnormal6}$ coincide, as in Figure
  \ref{fig4}.  This gives another well-known $E_{(1,1)}$ projective
  invariant for five points in the (projective or Euclidean) plane
  (\cite{rg} \S 10.2),
  $$\frac{\llbracket\triangle({\mathnormal1}{\mathnormal2}{\mathnormal3})\rrbracket_{\bbb}\llbracket\triangle({\mathnormal4}{\mathnormal5}{\mathnormal1})\rrbracket_{\bbb}}{\llbracket\triangle({\mathnormal1}{\mathnormal2}{\mathnormal4})\rrbracket_{\bbb}\llbracket\triangle({\mathnormal3}{\mathnormal5}{\mathnormal1})\rrbracket_{\bbb}}=\frac{(Area\triangle({\mathnormal1}{\mathnormal2}{\mathnormal3}))(Area\triangle({\mathnormal4}{\mathnormal5}{\mathnormal1}))}{(Area\triangle({\mathnormal1}{\mathnormal2}{\mathnormal4}))(Area\triangle({\mathnormal3}{\mathnormal5}{\mathnormal1}))}.$$
\end{example}

\section{Reconstruction}\label{sec5}

Let $\SSS=(\SSS_0,\ldots,\SSS_n)$ be a $({\mathbf
p},r,\ell,D)$-configuration in $\fd P^D$, and pick a pair of colors
$(i,j)\in I$.  The ordered pair $(\SSS_i,\SSS_j)$ is a
$((p_i,p_j),r,\ell,D)$-configuration.  If $\SSS$ is a weight $\mathbf
p$ h-configuration, then $(\SSS_i,\SSS_j)$ is a weight $(p_i,p_j)$
h-configuration.  For a weight $\mathbf p$ h-configuration $\SSS$, the
following are equivalent:
  \begin{enumerate}
    \item $(\SSS_i,\SSS_j)$ is a $((1,1),r,\ell\cdot
    p_i,D)$-configuration and a weight $(1,1)$ h-configuration;
    \item $p_i=p_j$.
\end{enumerate}
The goal of the following construction is to modify a
$((p_i,p_j),r,\ell,D)$-configuration $(\SSS_i,\SSS_j)$ into a new
$((1,1),r,\ell^\prime,D)$-configuration $\SSS^{(i,j)}$ in a way such
that if $p_i=p_j$, then the configuration does not change:
$\SSS^{(i,j)}=(\SSS_i,\SSS_j)$, and if $(\SSS_i,\SSS_j)$ is a weight
$(p_i,p_j)$ h-configuration, then $\SSS^{(i,j)}$ is a weight $(1,1)$
h-configuration.

Recall $\ell_{ij}=\lcm(p_i,p_j)$, and let $a_{ij}=\ell_{ij}/p_i$,
$b_{ij}=\ell_{ij}/p_j$ as in (\ref{eq20}).  
\begin{notation}\label{not6.1}
  Given a $((p_i,p_j),r,\ell,D)$-configuration $(\SSS_i,\SSS_j)$,
  define a new ordered pair
  $\SSS^{(i,j)}=(\SSS^{(i,j)}_i,\SSS^{(i,j)}_j)$, where as in
  (\ref{eq21}), each entry is an unordered list of $r$-tuples of
  points, one list with color $i$, the other with color $j$.  Let
  $\SSS^{(i,j)}_i$ be the concatenation of $a_{ij}$ copies of the list
  $\SSS_i$, so each of its $\ell\cdot p_i$ entries is repeated
  $a_{ij}$ times.  Similarly, let $\SSS^{(i,j)}_j$ be the
  concatenation of $b_{ij}$ copies of $\SSS_j$.
\end{notation}
The new configuration could be (but is not) be descriptively denoted
$(a_{ij}\SSS_i,b_{ij}\SSS_j)$.  So far, $\SSS^{(i,j)}$ is a
$((1,1),r,\ell\cdot\ell_{ij},D)$-configuration, since both
$\SSS^{(i,j)}_i$ and $\SSS^{(i,j)}_j$ have $\ell\cdot\ell_{ij}$
entries, and the independence property of each $r$-tuple is inherited.
\begin{lem}\label{lem6.1}
  If $(\SSS_i,\SSS_j)$ as above is a weight $(p_i,p_j)$
  h-configuration, then $\SSS^{(i,j)}$ is a weight $(1,1)$
  h-configuration.
\end{lem}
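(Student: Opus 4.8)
The plan is to verify the two conditions of Definition \ref{def4.2}, applied with the weight $(1,1)$, directly for $\SSS^{(i,j)}$, by tracking how the color degrees of points and of $(r-1)$-subspaces transform under the concatenation operation of Notation \ref{not6.1}. Since the target weight is $(1,1)$, the integrality requirement in Definition \ref{def4.2} is automatic, so the only content is that the $i$-degree equals the $j$-degree at every point $z\in\fd P^D$ and on every projective $(r-1)$-subspace $L$.

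First I would fix ordered representatives $S_i$, $S_j$ of $\SSS_i$, $\SSS_j$, so that an ordered representative of $\SSS^{(i,j)}_i$ is the concatenation of $a_{ij}$ copies of $S_i$, and likewise an ordered representative of $\SSS^{(i,j)}_j$ is the concatenation of $b_{ij}$ copies of $S_j$. Counting with multiplicity, as in Definitions \ref{def4.1} and \ref{def4.6}, then immediately gives that the $i$-degree of a point $z$ computed in $\SSS^{(i,j)}$ equals $a_{ij}\,deg_i(z)$, and the $j$-degree equals $b_{ij}\,deg_j(z)$, where $deg_i$ and $deg_j$ are the color degrees in the original pair $(\SSS_i,\SSS_j)$; the same identity holds verbatim with a projective $(r-1)$-subspace $L$ in place of $z$.

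Next I would feed in the hypothesis that $(\SSS_i,\SSS_j)$ is a weight $(p_i,p_j)$ h-configuration. By part $1$ of Definition \ref{def4.2} applied to that pair, there is an integer $y$ (depending on $z$) with $deg_i(z)=y p_i$ and $deg_j(z)=y p_j$; hence the $i$-degree of $z$ in $\SSS^{(i,j)}$ is $a_{ij}y p_i$ and its $j$-degree is $b_{ij}y p_j$, and these are equal because $a_{ij}p_i = b_{ij}p_j = \ell_{ij}$ by \mbox{\rm{(\ref{eq20})}}; both equal $y\ell_{ij}$. Dually, part $2$ of Definition \ref{def4.2} supplies an integer $m_L$ with $deg_i(L)=m_L p_i$ and $deg_j(L)=m_L p_j$, and the same arithmetic shows the $i$- and $j$-degrees of $L$ in $\SSS^{(i,j)}$ both equal $m_L\ell_{ij}$. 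This verifies both conditions, so $\SSS^{(i,j)}$ is a weight $(1,1)$ h-configuration.

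I do not expect a genuine obstacle here; the one point requiring a little care is that degrees are defined through ordered representatives and counted with multiplicity, so one must explicitly observe that concatenating $a_{ij}$ copies of a list multiplies every such count by exactly $a_{ij}$ — which is precisely the reason Notation \ref{not6.1} is phrased in terms of concatenation of copies rather than any other modification. The numerical identity $a_{ij}p_i=b_{ij}p_j=\ell_{ij}$ is exactly what converts the weight $(p_i,p_j)$ balance of degrees into a weight $(1,1)$ balance.
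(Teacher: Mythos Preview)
Your proposal is correct and follows essentially the same approach as the paper: both arguments observe that concatenating $a_{ij}$ (resp.\ $b_{ij}$) copies of a list multiplies every degree count by that factor, and then use the identity $a_{ij}p_i=b_{ij}p_j=\ell_{ij}$ to convert the weight-$(p_i,p_j)$ balance into the weight-$(1,1)$ balance, treating the point and subspace conditions dually. The paper phrases the key step as the equivalence $\frac{deg_i(z)\cdot a_{ij}}{1}=\frac{deg_j(z)\cdot b_{ij}}{1}\iff\frac{deg_i(z)}{p_i}=\frac{deg_j(z)}{p_j}$, whereas you introduce the integer $y$ explicitly, but the content is identical.
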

\begin{proof}
  Part 1.\ of Definition \ref{def4.2} is satisfied, with weight
  $(1,1)$: By construction, the $i$-degree of any point $z$ in the
  $\SSS^{(i,j)}$ configuration is $a_{ij}$ times $deg_i(z)$, the
  $i$-degree of the same point in the $\SSS$ configuration, and
  similarly for $j$, so:
  $$\frac{deg_i(z)\cdot a_{ij}}{1}=\frac{deg_j(z)\cdot
    b_{ij}}{1}\iff\frac{deg_i(z)}{p_i}=\frac{deg_j(z)}{p_j}.$$

  Dually, part 2.\ of Definition \ref{def4.2}
  is also satisfied, by the same calculation.
\end{proof}
The following identity applies Theorem \ref{thm4.4} to $\SSS^{(i,j)}$.
Recall $$h_{ij}:\fd P({\mathbf p})\to\fd
P^1:z\mapsto[z_i^{\ell_{ij}/p_i}:z_j^{\ell_{ij}/p_j}]$$ is the axis
projection (\ref{eq6}) from Lemma \ref{lem1.19}.
\begin{cor}\label{cor6.2}
  If $\SSS$ is a weight $\mathbf p$ h-configuration, then
  $$E_{(1,1)}(\SSS^{(i,j)})=h_{ij}\left(E_{\mathbf p}(\SSS)\right).$$
\end{cor}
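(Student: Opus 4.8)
The plan is to apply Theorem \ref{thm4.4} directly to $\SSS^{(i,j)}$ --- which is a weight $(1,1)$ h-configuration by Lemma \ref{lem6.1} --- and to recognize the two bracket products that arise as the $a_{ij}$-th and $b_{ij}$-th powers of the $i$-th and $j$-th coordinates of the expression for $E_{\mathbf p}(\SSS)$. The enabling observation is that $\pp(\SSS^{(i,j)})=\pp(\SSS_i)\cup\pp(\SSS_j)\subseteq\pp(\SSS)$ and $\LL(\SSS^{(i,j)})=\LL(\SSS_i)\cup\LL(\SSS_j)\subseteq\LL(\SSS)$, so one system of choices (representative vectors and ordered bases) serves for both invariants.

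First I would fix one representative vector $\mathbf z\in\fd^{D+1}_*$ for each point $z\in\pp(\SSS)$ and one ordered basis $\bbb_L$ for each subspace $L\in\LL(\SSS)$, and compute $E_{\mathbf p}(\SSS)$ with these, as in Theorem \ref{thm4.4}. Since every point and every spanning subspace occurring in $\SSS^{(i,j)}$ already occurs in $\SSS$, the same choices are admissible when Theorem \ref{thm4.4} is applied to $\SSS^{(i,j)}$ with weight $(1,1)$; with them, each bracket $\llbracket\vec s_c^K\rrbracket_{\bbb_{c,K}}$, $c\in\{i,j\}$, occurring in the $\SSS^{(i,j)}$ computation is literally the corresponding bracket in the $\SSS$ computation, not merely equal to it up to a nonzero scalar. (By the well-definedness part of Theorem \ref{thm4.4}, the value $E_{(1,1)}(\SSS^{(i,j)})$ does not depend on these choices in any case.)

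Next I would unwind Notation \ref{not6.1}. The list $\SSS^{(i,j)}_i$ is the concatenation of $a_{ij}$ copies of $\SSS_i$, hence has $a_{ij}\cdot\ell p_i=\ell\ell_{ij}$ entries; by commutativity of multiplication, the product of its brackets equals $\bigl(\prod_{K=1}^{\ell p_i}\llbracket\vec s_i^K\rrbracket_{\bbb_{i,K}}\bigr)^{a_{ij}}$, and similarly the bracket product for $\SSS^{(i,j)}_j$ equals $\bigl(\prod_{K=1}^{\ell p_j}\llbracket\vec s_j^K\rrbracket_{\bbb_{j,K}}\bigr)^{b_{ij}}$. Applying Theorem \ref{thm4.4} with weight $(1,1)$ and parameter $\ell^\prime=\ell\ell_{ij}$ therefore gives
$$E_{(1,1)}(\SSS^{(i,j)})=\left[\Bigl(\prod_{K=1}^{\ell p_i}\llbracket\vec s_i^K\rrbracket_{\bbb_{i,K}}\Bigr)^{a_{ij}}:\Bigl(\prod_{K=1}^{\ell p_j}\llbracket\vec s_j^K\rrbracket_{\bbb_{j,K}}\Bigr)^{b_{ij}}\right].$$

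Finally, recall from the proof of Theorem \ref{thm4.4} that $E_{\mathbf p}(\SSS)\in D_{\mathbf p}$, so its $i$-th and $j$-th coordinates are nonzero and the axis projection $h_{ij}$ of \mbox{\rm(\ref{eq6})} is defined at $E_{\mathbf p}(\SSS)$; by definition $h_{ij}$ raises the $i$-th coordinate to the power $\ell_{ij}/p_i=a_{ij}$ and the $j$-th coordinate to the power $\ell_{ij}/p_j=b_{ij}$, yielding exactly the displayed element of $\fd P^1$. Hence $E_{(1,1)}(\SSS^{(i,j)})=h_{ij}(E_{\mathbf p}(\SSS))$. The only step that needs genuine care is the bookkeeping of the second paragraph --- matching up the index sets and verifying that the choices transfer so the brackets agree on the nose; once that is in place the identity is just multiplicativity of products under concatenation together with the definition of $h_{ij}$, and there is no substantial obstacle.
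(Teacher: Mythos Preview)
Your proposal is correct and follows essentially the same approach as the paper: use the same representative vectors and bases for $\SSS^{(i,j)}$ as for $\SSS$, expand the bracket products via the $a_{ij}$- and $b_{ij}$-fold repetition built into Notation~\ref{not6.1}, and identify the result with $h_{ij}(E_{\mathbf p}(\SSS))$. If anything, your write-up is a bit more careful than the paper's --- you make explicit the containments $\pp(\SSS^{(i,j)})\subseteq\pp(\SSS)$ and $\LL(\SSS^{(i,j)})\subseteq\LL(\SSS)$ that justify reusing the choices, and you note that $E_{\mathbf p}(\SSS)\in D_{\mathbf p}$ so that $h_{ij}$ is defined there.
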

\begin{proof}
  Suppose the points and projective $(r-1)$-subspaces in the weight
  $\mathbf p$ h-configuration $\SSS$ have been assigned vector
  representatives $\mathbf z$ and bases $\bbb_L$ as in Theorem
  \ref{thm4.4}.  In the weight $(1,1)$ h-configuration $\SSS^{(i,j)}$,
  we can use the same representatives and bases.  By the weight
  $(1,1)$ case of Theorem \ref{thm4.4}, using some choice of ordered
  representative $$S^{(i,j)}_i=\left(\vec s^{\
  (i,j)}_{i,1},\ldots,\vec s^{\ (i,j)}_{i,K},\ldots,\vec s^{\
  (i,j)}_{i,\ell\cdot\ell_{ij}}\right)$$ for $\SSS^{(i,j)}_i$ and
  similarly $S^{(i,j)}_j$ for $\SSS^{(i,j)}_j$, the following
  $E_{(1,1)}$ ratio in $\fd P^1$ is well-defined, and invariant under
  morphisms.  The products can be expanded using the multiplicity of
  the $r$-tuples.
\begin{eqnarray*}
  E_{(1,1)}(\SSS^{(i,j)})&=&\left[\prod_{K=1}^{\ell\cdot\ell_{ij}}\llbracket\vec
  s^{\
  (i,j)}_{i,K}\rrbracket_{\bbb_{i,K}}:\prod_{K=1}^{\ell\cdot\ell_{ij}}\llbracket\vec
  s^{\ (i,j)}_{j,K}\rrbracket_{\bbb_{j,K}}\right]\\
  &=&\left[\left(\prod_{K^\prime=1}^{\ell p_i}\llbracket\vec
  s_i^{K^\prime}\rrbracket_{\bbb_{i,K^\prime}}\right)^{a_{ij}}:\left(\prod_{K^\prime=1}^{\ell
  p_j}\llbracket\vec
  s_j^{K^\prime}\rrbracket_{\bbb_{j,K^\prime}}\right)^{b_{ij}}\right]\\
  &=&h_{ij}\left(E_{\mathbf p}(\SSS)\right).
\end{eqnarray*}
\end{proof}
The analogue of the above construction in classical Invariant Theory
is the formation of an absolute invariant as a ratio of powers of
differently weighted relative invariants, as in (\cite{s} Art.\
XII.122).

Suppose ${\mathbf p}$ and $\fd$ have the property that $\fd P({\mathbf
p})$ is reconstructible.  By Lemma \ref{lem1.22}, $E_{\mathbf
p}(\SSS)$ is uniquely determined by the set of ratios
$h_{ij}\left(E_{\mathbf p}(\SSS)\right)$, for $(i,j)\in I$.  Corollary
\ref{cor6.2} shows that the weight $\mathbf p$ invariant $E_{\mathbf
p}(\SSS)$ can be uniquely reconstructed by finding the weight $(1,1)$
invariant for all (or possibly fewer) of the weight $(1,1)$
h-configurations $\SSS^{(i,j)}$.  So, the $E_{\mathbf p}$ invariant
has no more power to distinguish projectively inequivalent weight
$\mathbf p$ h-configurations $\SSS$ than does the $E_{(1,1)}$
invariant, applied at most $n(n+1)/2$ times, two colors at a time, via
the above construction.

However, if $\fd P({\mathbf p})$ is not reconstructible, then there
may be weight $\mathbf p$ h-configurations with different $E_{\mathbf
p}$ invariants, but which cannot be distinguished using only
$E_{(1,1)}$ and the reconstruction process described in the previous
paragraph.  The following two Examples show this can happen when
$\fd=\re$, $r=2$, and Eves' Theorem is applied to signed distances in
$\re^D$ as in Corollary \ref{cor5.1}.
\begin{example}\label{ex6.1}
  The simplest example of a non-reconstructible weighted projective
  space is $\re P(2,2)$, where there is only one axis projection in
  the product from Definition \ref{def1.21}: let $h_{0,1}:\re
  P(2,2)\to\re P(1,1)$ be the two-to-one map induced by the inclusion
  ${\mathbf h}_{0,1}(z_0,z_1)=(z_0,z_1)$ as in Theorem \ref{thm1.12}
  and Example \ref{ex1.13}.  The simplest example of a weight $(2,2)$
  h-configuration has $r=2$, $D=1$ and $\ell=1$: one line $L=\re
  P^1$.  Let $\alpha$ and $\beta$ be distinct points on
  $\re^1\subseteq\re P^1$, and consider the configuration with the
  directed segment $(\alpha,\beta)$ appearing with multiplicity $4$:
  two black segments and two red segments.  The indexing as in
  (\ref{eq21}) is $\SSS=(\SSS_0,\SSS_1)$, and
  $\SSS_0=\SSS_1=[(\alpha,\beta),(\alpha,\beta)]$.  If we pick any
  unit of length in either direction, in order to define
  $\llbracket\overrightarrow{\alpha\beta}\rrbracket$ as the signed
  length of the directed segment $(\alpha,\beta)$, then the weighted
  invariant from Corollary \ref{cor5.1} is
  $$E_{(2,2)}(\SSS)=\left[\llbracket\overrightarrow{\alpha\beta}\rrbracket^2:\llbracket\overrightarrow{\alpha\beta}\rrbracket^2\right]_{(2,2)}=[1:1]_{(2,2)}.$$
  The modification of $\SSS$ into a weight $(1,1)$ h-configuration
  $\SSS^{(0,1)}$ is only a change in point of view from a
  $((2,2),2,1,1)$-configuration to a $((1,1),2,2,1)$-configuration;
  there is no change in the lists of segments:
  $$\SSS^{(0,1)}=(\SSS_0^{(0,1)},\SSS_1^{(0,1)})=(\SSS_0,\SSS_1)=\SSS,$$
  or the set of lines, $\{L\}$.  By Corollary \ref{cor6.2}, the $(1,1)$ invariant of this
  h-configuration is:
  $$E_{(1,1)}(\SSS^{(0,1)})=h_{0,1}(E_{(2,2)}(\SSS))=\left[\llbracket\overrightarrow{\alpha\beta}\rrbracket^2:\llbracket\overrightarrow{\alpha\beta}\rrbracket^2\right]_{(1,1)}=[1:1].$$

\begin{center}
  \begin{figure}
\begin{center}
    \includegraphics[scale=0.65]{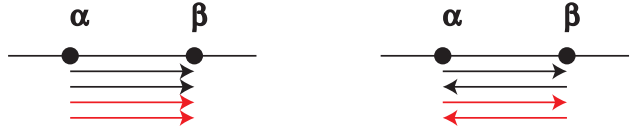}
    \caption{Configurations of $2$ points and $4$ segments on $1$ real
      line, from Example \ref{ex6.1}.  Left: configuration $\SSS$;
      Right: configuration $\ttt$.}\label{fig5}
  \end{center}
  \end{figure}
\end{center}

  Now, let $\ttt$ be a new weight $(2,2)$ h-configuration: the same
  line $L$ and points $\alpha$, $\beta$ as $\SSS$, but with two
  black segments in opposite directions, and two red segments also in
  opposite directions.  The indexing as in (\ref{eq21}) is
  $\ttt=(\ttt_0,\ttt_1)$,
  $\ttt_0=\ttt_1=[(\alpha,\beta),(\beta,\alpha)]$.  There is obviously
  no morphism $\SSS\to\ttt$, and the weighted invariant is a different
  element of $\re P(2,2)$:
  $$E_{(2,2)}(\ttt)=\left[\llbracket\overrightarrow{\alpha\beta}\rrbracket\llbracket\overrightarrow{\beta\alpha}\rrbracket:\llbracket\overrightarrow{\alpha\beta}\rrbracket\llbracket\overrightarrow{\beta\alpha}\rrbracket\right]_{(2,2)}=[-1:-1]_{(2,2)}.$$
  $\ttt$ is also a weight $(1,1)$ h-configuration, with $(1,1)$
  invariant:
  \begin{eqnarray}
    E_{(1,1)}(\ttt^{(0,1)})&=&h_{0,1}(E_{(2,2)}(\ttt))\nonumber\\
    &=&\left[\llbracket\overrightarrow{\alpha\beta}\rrbracket\llbracket\overrightarrow{\beta\alpha}\rrbracket:\llbracket\overrightarrow{\alpha\beta}\rrbracket\llbracket\overrightarrow{\beta\alpha}\rrbracket\right]_{(1,1)}\label{eq27}\\
    &=&[-1:-1]=[1:1].\nonumber
  \end{eqnarray}
  The conclusion is that the $E_{(1,1)}$ invariant cannot distinguish
  between $\SSS^{(0,1)}=\SSS$ and $\ttt^{(0,1)}=\ttt$.
\end{example}
\begin{example}\label{ex6.4}
  Let $\alpha$, $\beta$, $\gamma$ be the vertices of a triangle in the
  Euclidean plane $\re^2$, and let $\alpha^\prime$, $\beta^\prime$,
  $\gamma^\prime$ be the midpoints on opposite sides.  The following
  $((2,2,4),2,3,2)$-configuration $\SSS=(\SSS_0,\SSS_1,\SSS_2)$ is a
  weight $(2,2,4)$ h-configuration.
  \begin{eqnarray*}
    \SSS_0&=&[(\beta,\alpha^\prime),(\beta,\alpha^\prime),(\gamma,\beta^\prime),(\gamma,\beta^\prime),(\alpha,\gamma^\prime),(\alpha,\gamma^\prime)]\\
    \SSS_1&=&[(\alpha^\prime,\gamma),(\alpha^\prime,\gamma),(\beta^\prime,\alpha),(\beta^\prime,\alpha),(\gamma^\prime,\beta),(\gamma^\prime,\beta)]\\
    \SSS_2&=&[(\beta,\alpha^\prime),(\beta,\alpha^\prime),(\alpha^\prime,\gamma),(\alpha^\prime,\gamma),(\gamma,\beta^\prime),(\gamma,\beta^\prime),\\
    &&\ (\beta^\prime,\alpha),(\beta^\prime,\alpha),(\alpha,\gamma^\prime),(\alpha,\gamma^\prime),(\gamma^\prime,\beta),(\gamma^\prime,\beta)].
  \end{eqnarray*}
  It is possible to pick a direction and unit of length for each of
  the three lines so all the directed segments have signed length
  $+1$.  The invariant from Corollary \ref{cor5.1} is:
  \begin{eqnarray*}
    E_{\mathbf p}(\SSS)&=&[1:1:1]_{(2,2,4)}.
  \end{eqnarray*}

\begin{center}
  \begin{figure}
\begin{center}
    \includegraphics[scale=0.6]{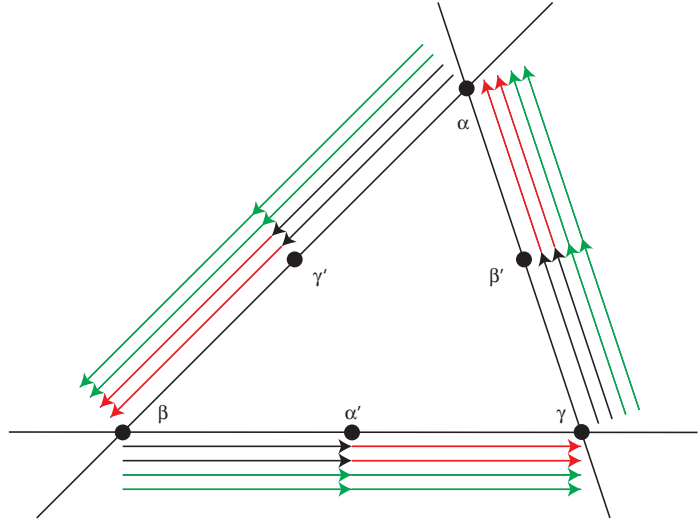}
    \caption{The configuration $\SSS$ from Example \ref{ex6.4}, of $6$
      points, $3$ lines, and $24$ directed segments in the real
      plane.}\label{fig7}
  \end{center}
  \end{figure}
\end{center}

  If we ignore the green segments and look only at the black and red
  segments, $\SSS^{(0,1)}=(\SSS_0,\SSS_1)$ is an h-configuration with
  $E_{(1,1)}(\SSS^{(0,1)})=[1:1]$.  However, the other color pairs
  $(\SSS_0,\SSS_2)$ and $(\SSS_1,\SSS_2)$ are not h-configurations.
  The modification of $(\SSS_0,\SSS_2)$ into $\SSS^{(0,2)}$ is to
  duplicate all the black segments, so each line has four black
  segments and four green segments.  Then
  $E_{(1,1)}(\SSS^{(0,2)})=[1:1]$ as in Corollary \ref{cor6.2}, and
  similarly $E_{(1,1)}(\SSS^{(1,2)})=[1:1]$.

  By Theorems \ref{thm1.12} and \ref{thm1.29}, the product of axis
  projections,
  \begin{eqnarray*}
    \prod h_{ij}:\re P(2,2,4)&\to&\re P^1\times\re P^1\times\re P^1:\\
    z&\mapsto&(h_{01}(z),h_{02}(z),h_{12}(z)),\\
    \mbox{$[z_0:z_1:z_2]_{(2,2,4)}$}&\mapsto&([z_1:z_2],[z_0^2:z_2],[z_1^2:z_2]),
  \end{eqnarray*}
  is two-to-one on $D_{\mathbf p}$.  In particular,
  $[1:1:1]_{(2,2,4)}\mapsto([1:1],[1:1],[1:1])$, and the other point
  with that image is $[-1:-1:1]_{(2,2,4)}$.

  So, as in Example \ref{ex6.1}, it is possible to find projectively
  inequivalent weight $(2,2,4)$ h-configurations $\SSS$ and $\ttt$
  with different $E_{(2,2,4)}$ invariants, but which have the same
  $E_{(1,1)}$ invariants from applying Eves' Theorem to their three
  h-configurations $\SSS^{(i,j)}$ and $\ttt^{(i,j)}$.  We can reverse
  some of the red and black directed segments from $\SSS$ to get a new
  configuration $\ttt=(\ttt_0,\ttt_1,\ttt_2)$,
  \begin{eqnarray*}
    \ttt_0&=&[(\beta,\alpha^\prime),(\alpha^\prime,\beta),(\gamma,\beta^\prime),(\beta^\prime,\gamma),(\alpha,\gamma^\prime),(\gamma^\prime,\alpha)]\\
    \ttt_1&=&[(\alpha^\prime,\gamma),(\gamma,\alpha^\prime),(\beta^\prime,\alpha),(\alpha,\beta^\prime),(\gamma^\prime,\beta),(\beta,\gamma^\prime)]\\
    \ttt_2&=&\SSS_2.
  \end{eqnarray*}

\begin{center}
  \begin{figure}
\begin{center}
    \includegraphics[scale=0.6]{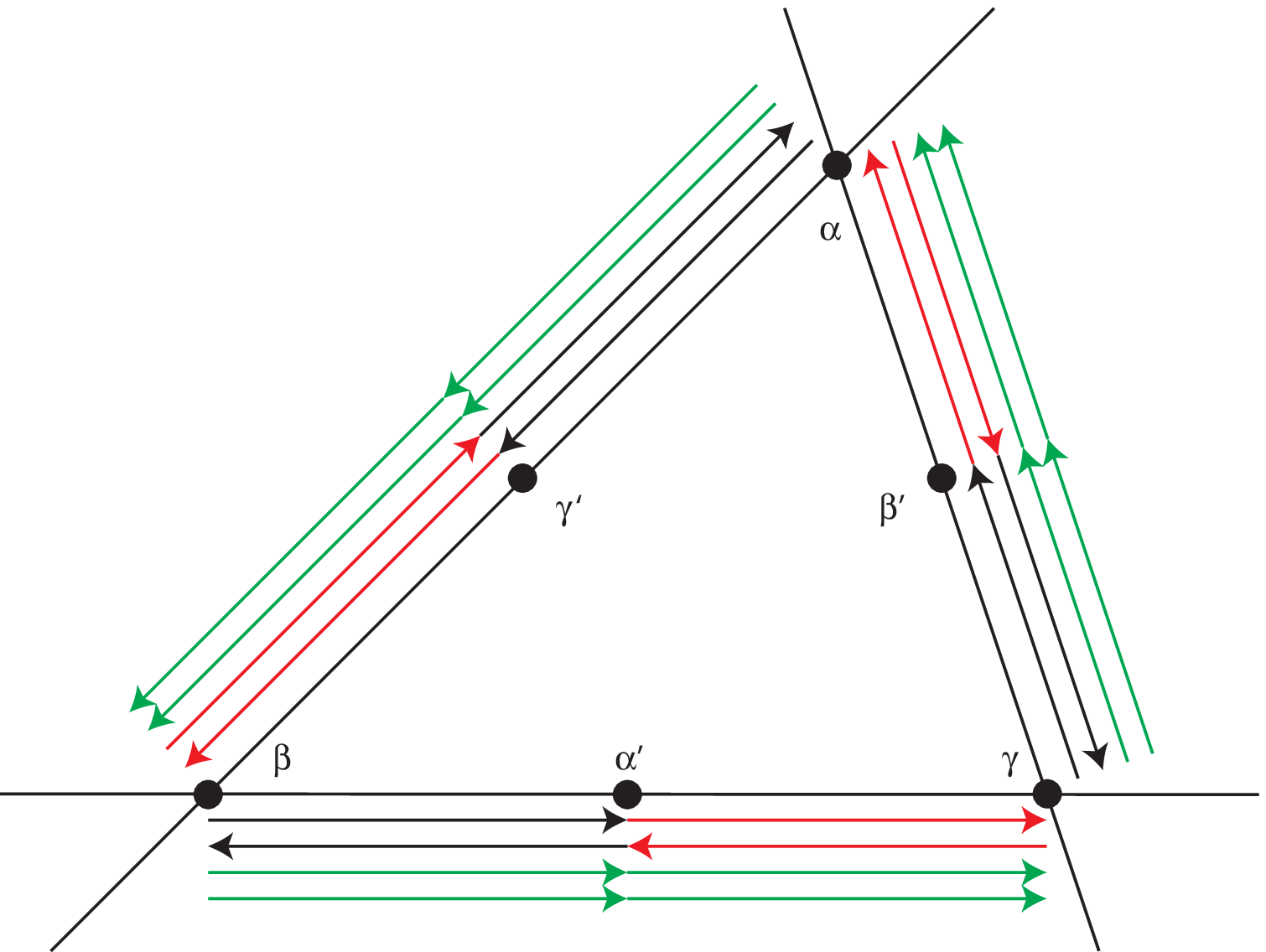}
    \caption{The configuration $\ttt$ from Example
      \ref{ex6.4}.}\label{fig8}
  \end{center}
  \end{figure}
\end{center}

  So, $E_{\mathbf p}(\ttt)=[-1:-1:1]_{(2,2,4)}$, and all three $(i,j)$
  color pairs have $E_{(1,1)}(\ttt^{(i,j)})=[1:1]$.
\end{example}
The next Example is a configuration considered by \cite{b}; all six
points are in the Euclidean plane, as in Examples \ref{ex5.2},
\ref{ex5.3}, but the configuration can be seen to have an octahedral
pattern.

\begin{center}
  \begin{figure}
\begin{center}
    \includegraphics[scale=0.6]{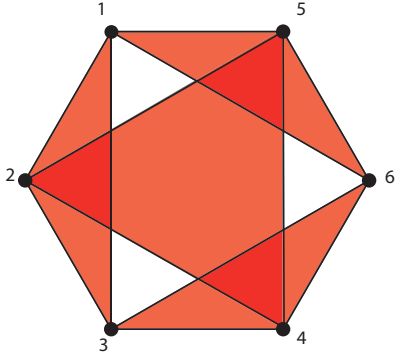}
    \caption{A configuration of $6$ points in the real plane, showing
      the $4$ red triangles from Example \ref{ex6.5}.}\label{fig6}
  \end{center}
  \end{figure}
\end{center}

\begin{example}\label{ex6.5}
  Let
  ${\mathnormal1},{\mathnormal2},{\mathnormal3},{\mathnormal4},{\mathnormal5},{\mathnormal6}$,
  be six points in the Euclidean plane as in Example \ref{ex5.2}.  Let
  $\SSS=(\SSS_0,\SSS_1)$ be a configuration of four black triangles
  and four red triangles:
  \begin{eqnarray*}
    \SSS_0&=&[\triangle({\mathnormal4}{\mathnormal6}{\mathnormal5}),\triangle({\mathnormal4}{\mathnormal2}{\mathnormal3}),\triangle({\mathnormal5}{\mathnormal1}{\mathnormal2}),\triangle({\mathnormal1}{\mathnormal3}{\mathnormal6})],\\
    \SSS_1&=&[\triangle({\mathnormal1}{\mathnormal2}{\mathnormal3}),\triangle({\mathnormal1}{\mathnormal6}{\mathnormal5}),\triangle({\mathnormal2}{\mathnormal4}{\mathnormal5}),\triangle({\mathnormal3}{\mathnormal4}{\mathnormal6})].
  \end{eqnarray*}
  Each of the six points has black degree and red degree equal to $2$,
  so, unlike Example \ref{ex5.2}, $\SSS$ can be viewed as either a
  $((2,2),3,2,2)$-configuration or a $((1,1),3,4,2)$-configuration.
  $\SSS$ is both a weight $(2,2)$ h-configuration and a weight $(1,1)$
  h-configuration, equal to $\SSS^{(0,1)}$.  In the plane coordinate
  system from Section \ref{sec4}, the $(2,2)$ invariant is:
  \begin{eqnarray*}
    E_{(2,2)}(\SSS)&=&\left[\llbracket\triangle({\mathnormal4}{\mathnormal6}{\mathnormal5})\rrbracket_\bbb\llbracket\triangle({\mathnormal4}{\mathnormal2}{\mathnormal3})\rrbracket_\bbb\llbracket\triangle({\mathnormal5}{\mathnormal1}{\mathnormal2})\rrbracket_\bbb\llbracket\triangle({\mathnormal1}{\mathnormal3}{\mathnormal6})\rrbracket_\bbb:\right.\\
     && \
     \left.\llbracket\triangle({\mathnormal1}{\mathnormal2}{\mathnormal3})\rrbracket_\bbb\llbracket\triangle({\mathnormal1}{\mathnormal6}{\mathnormal5})\rrbracket_\bbb\llbracket\triangle({\mathnormal2}{\mathnormal4}{\mathnormal5})\rrbracket_\bbb\llbracket\triangle({\mathnormal3}{\mathnormal4}{\mathnormal6})\rrbracket_\bbb\right]_{(2,2)}\\
    &=&[z_0:z_1]_{(2,2)}.
  \end{eqnarray*}
  The $(1,1)$ invariant of the same configuration is:
  $$E_{(1,1)}(\SSS^{(0,1)})=[z_0:z_1]_{(1,1)},$$ which can be
  interpreted as the ratio of signed areas:
  $$\frac{z_1}{z_0}=\frac{(Area\triangle({\mathnormal1}{\mathnormal2}{\mathnormal3}))(Area\triangle({\mathnormal1}{\mathnormal6}{\mathnormal5}))(Area\triangle({\mathnormal2}{\mathnormal4}{\mathnormal5}))(Area\triangle({\mathnormal3}{\mathnormal4}{\mathnormal6}))}{(Area\triangle({\mathnormal4}{\mathnormal6}{\mathnormal5}))(Area\triangle({\mathnormal4}{\mathnormal2}{\mathnormal3}))(Area\triangle({\mathnormal5}{\mathnormal1}{\mathnormal2}))(Area\triangle({\mathnormal1}{\mathnormal3}{\mathnormal6}))}.$$
  We remark that the property $E_{(1,1)}(\SSS^{(0,1)})=[1:1]$ is
  equivalent to the projective property that six points lie on a conic
  (\cite{crg}, \cite{rg}).

  Let $\ttt=(\ttt_0,\ttt_1)$ be a new configuration --- the same six
  points, but changing the order of points in one of the black
  triangles and one of the red triangles to get the opposite signed
  areas:
  \begin{eqnarray*}
    \ttt_0&=&[\triangle({\mathnormal4}{\mathnormal5}{\mathnormal6}),\triangle({\mathnormal4}{\mathnormal2}{\mathnormal3}),\triangle({\mathnormal5}{\mathnormal1}{\mathnormal2}),\triangle({\mathnormal1}{\mathnormal3}{\mathnormal6})],\\
    \ttt_1&=&[\triangle({\mathnormal1}{\mathnormal3}{\mathnormal2}),\triangle({\mathnormal1}{\mathnormal6}{\mathnormal5}),\triangle({\mathnormal2}{\mathnormal4}{\mathnormal5}),\triangle({\mathnormal3}{\mathnormal4}{\mathnormal6})].
  \end{eqnarray*}
  Then $\ttt$ has the same $E_{(1,1)}$ invariant, and in the ratio of
  signed areas, the sign changes cancel, giving the same ratio as
  $\SSS$.  The two configurations have different $(2,2)$ invariants,
  so they are projectively inequivalent:
  $$E_{(2,2)}(\ttt)=[-z_0:-z_1]_{(2,2)}\ne[z_0:z_1]_{(2,2)}=E_{(2,2)}(\SSS).$$
\end{example}

\subsubsection*{Acknowledgments}
  The author was motivated to start writing this paper after attending
  a talk by M.\ Frantz (\cite{f2}), and he thanks Frantz for
  subsequent conversations and for pointing out the reference
  \cite{p}.  The Figures were generated using \cite{AI}.

\end{document}